\documentclass[leqno,11pt]{amsart}

\usepackage{amssymb, amsmath,amsthm}
\usepackage{amssymb,amsfonts}
\usepackage{amsmath,latexsym}
\usepackage{pdfsync}
\usepackage{bbm}
\usepackage{color}
\usepackage[dvipsnames]{xcolor}
\usepackage{cancel}
\usepackage{ulem}
\usepackage{graphicx, color, multicol, float}

\textheight 659pt
\textwidth  444pt
\oddsidemargin  -1mm
\evensidemargin -1mm
\topmargin      -8mm
\frenchspacing
\pagestyle{plain}
\newtheorem{theo}{Theorem}[section]
\newtheorem{prop}[theo]{Proposition}
\newtheorem{lemme}[theo]{Lemma}
\newtheorem{defin}[theo]{Definition}

\newtheorem {remark}[theo] {Remark}

\numberwithin{equation}{section}

\usepackage{verbatim}

\newcommand{\tsl}{\textsl}

\newcommand{\vtheta}{\vartheta}

\newcommand{\mc}{\mathcal}

\renewcommand{\Re}{{\rm Re}\,}
\renewcommand{\Im}{{\rm Im}\,}

\newcommand{\divh}{{\rm div}_\h}

\newcommand{\curl}{{\rm curl}\,}

\newcommand{\Id}{{\rm Id}\,}
\newcommand{\supp}{{\rm Supp}\,}

\newcommand{\oline}{\overline}

\makeatletter
\@namedef{subjclassname@2020}{\textup{2020} Mathematics Subject Classification}
\makeatother

\allowdisplaybreaks


\begin{document}


\makeatletter
\def\sommaire{\@restonecolfalse\if@twocolumn\@restonecoltrue\onecolumn
\fi\chapter*{Sommaire\@mkboth{SOMMAIRE}{SOMMAIRE}}
  \@starttoc{toc}\if@restonecol\twocolumn\fi}
\makeatother

\makeatletter
\def\thebibliographie#1{\chapter*{Bibliographie\@mkboth
  {BIBLIOGRAPHIE}{BI{\rm BL}IOGRAPHIE}}\list
  {[\arabic{enumi}]}{\settowidth\labelwidth{[#1]}\leftmargin\labelwidth
  \advance\leftmargin\labelsep
  \usecounter{enumi}}
  \def\newblock{\hskip .11em plus .33em minus .07em}
  \sloppy\clubpenalty4000\widowpenalty4000
  \sfcode`\.=1000\relax}
\let\endthebibliography=\endlist
\makeatother

\makeatletter
\def\references#1{\section*{R\'ef\'erences\@mkboth
  {R\'EF\'ERENCES}{R\'EF\'ERENCES}}\list
  {[\arabic{enumi}]}{\settowidth\labelwidth{[#1]}\leftmargin\labelwidth
  \advance\leftmargin\labelsep
  \usecounter{enumi}}
  \def\newblock{\hskip .11em plus .33em minus .07em}
  \sloppy\clubpenalty4000\widowpenalty4000
  \sfcode`\.=1000\relax}

\let\endthebibliography=\endlist
\makeatother

\def\lunloc#1#2{L^1_{loc}(#1 ; #2)}
\def\bornva#1#2{L^\infty(#1 ; #2)}
\def\bornlocva#1#2{L_{loc}^\infty(#1 ;\penalty-100{#2})}
\def\integ#1#2#3#4{\int_{#1}^{#2}#3d#4}
\def\reel#1{\R^#1}
\def\norm#1#2{\|#1\|_{#2}}
\def\normsup#1{\|#1\|_{L^\infty}}
\def\normld#1{\|#1\|_{L^2}}
\def\nsob#1#2{|#1|_{#2}}
\def\normbornva#1#2#3{\|#1\|_{L^\infty({#2};{#3})}}
\def\refer#1{~\ref{#1}}
\def\refeq#1{~(\ref{#1})}
\def\ccite#1{~\cite{#1}}
\def\pagerefer#1{page~\pageref{#1}}
\def\referloin#1{~\ref{#1} page~\pageref{#1}}
\def\refeqloin#1{~(\ref{#1}) page~\pageref{#1}}
\def\suite#1#2#3{(#1_{#2})_{#2\in {#3}}}
\def\ssuite#1#2#3{\hbox{suite}\ (#1_{#2})_{#2\in {#3}}}
\def\longformule#1#2{
\displaylines{
\qquad{#1}
\hfill\cr
\hfill {#2}
\qquad\cr
}
}
\def\inte#1{
\displaystyle\mathop{#1\kern0pt}^\circ
}
\def\sumetage#1#2{\sum_{\substack{{#1}\\{#2}}}}
\def\limetage#1#2{\lim_{\substack{{#1}\\{#2}}}}
\def\infetage#1#2{\inf_{\substack{{#1}\\{#2}}}}
\def\maxetage#1#2{\max_{\substack{{#1}\\{#2}}}}
\def\supetage#1#2{\sup_{\substack{{#1}\\{#2}}}}
\def\prodetage#1#2{\prod_{\substack{{#1}\\{#2}}}}

\def\convm#1{\mathop{\star}\limits_{#1}
}
\def\vect#1{
\overrightarrow{#1}
}
\def\Hd#1{{\mathcal H}^{d/2+1}_{1,{#1}}}

\def\derconv#1{\partial_t#1 + v\cdot\nabla #1}
\def\esptourb{\sigma + L^2(\R^2;\R^2)}
\def\tourb{tour\bil\-lon}



\newcommand{\beq}{\begin{equation}}
\newcommand{\eeq}{\end{equation}}
\newcommand{\ben}{\begin{eqnarray}}
\newcommand{\een}{\end{eqnarray}}
\newcommand{\beno}{\begin{eqnarray*}}
\newcommand{\eeno}{\end{eqnarray*}}

\let\al=\alpha
\let\b=\beta
\let\g=\gamma
\let\d=\delta
\let\e=\varepsilon
\let \z=\zeta
\let\lam=\lambda
\let\r=\rho
\let\s=\sigma
\let\f=\phi
\let\vf =\varphi
\let\p=\psi
\let\om=\omega
\let\G= \Gamma
\let\D=\Delta
\let\Lam=\Lambda
\let\S=\Sigma
\let\Om=\Omega
\let\wt=\widetilde
\let\wh=\widehat
\let\convf=\leftharpoonup
\let\tri\triangle

\def\cA{{\mathcal A}}
\def\cB{{\mathcal B}}
\def\cC{{\mathcal C}}
\def\cD{{\mathcal D}}
\def\cE{{\mathcal E}}
\def\cF{{\mathcal F}}
\def\cG{{\mathcal G}}
\def\cH{{\mathcal H}}
\def\cI{{\mathcal I}}
\def\cJ{{\mathcal J}}
\def\cK{{\mathcal K}}
\def\cL{{\mathcal L}}
\def\cM{{\mathcal M}}
\def\cN{{\mathcal N}}
\def\cO{{\mathcal O}}
\def\cP{{\mathcal P}}
\def\cQ{{\mathcal Q}}
\def\cR{{\mathcal R}}
\def\cS{{\mathcal S}}
\def\cT{{\mathcal T}}
\def\cU{{\mathcal U}}
\def\cV{{\mathcal V}}
\def\cW{{\mathcal W}}
\def\cX{{\mathcal X}}
\def\cY{{\mathcal Y}}
\def\cZ{{\mathcal Z}}

\def\virgp{\raise 2pt\hbox{,}}
\def\cdotpv{\raise 2pt\hbox{;}}
\def\eqdef{\buildrel\hbox{\footnotesize d\'ef}\over =}
\def\eqdefa{\buildrel\hbox{\footnotesize def}\over =}
\def\Id{\mathop{\rm Id}\nolimits}
\def\limf{\mathop{\rm limf}\limits}
\def\limfst{\mathop{\rm limf\star}\limits}
\def\sgn{\mathop{\rm sgn}\nolimits}
\def\RE{\mathop{\Re e}\nolimits}
\def\IM{\mathop{\Im m}\nolimits}
\def\im {\mathop{\rm Im}\nolimits}
\def\Sp{\mathop{\rm Sp}\nolimits}
\def\C{\mathop{\mathbb C\kern 0pt}\nolimits}
\def\DD{\mathop{\mathbb D\kern 0pt}\nolimits}
\def\EE{\mathop{\mathbb E\kern 0pt}\nolimits}
\def\K{\mathop{\mathbb K\kern 0pt}\nolimits}
\def\N{\mathop{\mathbb  N\kern 0pt}\nolimits}
\def\Q{\mathop{\mathbb  Q\kern 0pt}\nolimits}
\def\R{{\mathop{\mathbb R\kern 0pt}\nolimits}}
\def\SS{\mathop{\mathbb  S\kern 0pt}\nolimits}
\def\St{\mathop{\mathbb  S\kern 0pt}\nolimits}
\def\Z{\mathop{\mathbb  Z\kern 0pt}\nolimits}
\def\ZZ{{\mathop{\mathbb  Z\kern 0pt}\nolimits}}
\def\H{{\mathop{{\mathbb  H\kern 0pt}}\nolimits}}
\def\PP{\mathop{\mathbb P\kern 0pt}\nolimits}
\def\TT{\mathop{\mathbb T\kern 0pt}\nolimits}
 \def\L {{\rm L}}

\def\h {{\rm h}}
\def\top{{\rm surf}}
\def\surf{{\rm surf}}
\def\bot{{\rm bot}}
\def\app {{\rm app}}
\def\pol {{\rm p}}
\def\rad{{\rho }}
\def\v{{\rm v}}
\def\zrm{{\rm z}}
\def\BL{{\mathcal B \mathcal L}}
\def\curl{{\rm curl}}
\def\L {{\rm L}} 

\newcommand{\ds}{\displaystyle}
\newcommand{\la}{\lambda}
\newcommand{\hn}{{\bf H}^n}
\newcommand{\hnn}{{\mathbf H}^{n'}}
\newcommand{\ulzs}{u^\lam_{\zrm ,s}}
\def\bes#1#2#3{{B^{#1}_{#2,#3}}}
\def\pbes#1#2#3{{\dot B^{#1}_{#2,#3}}}
\newcommand{\ppd}{\dot{\Delta}}
\def\psob#1{{\dot H^{#1}}}
\def\pc#1{{\dot C^{#1}}}
\newcommand{\Hl}{{{\mathcal  H}_\lam}}
\newcommand{\fal}{F_{\al, \lam}}
\newcommand{\Dh}{\Delta_{{\mathbf H}^n}}
\newcommand{\car}{{\mathbf 1}}
\newcommand{\X}{{\mathcal  X}}
\newcommand{\fgl}{F_{\g, \lam}}
\newcommand{\wU}{{\widetilde U  }}
\newcommand{\andf}{\quad\hbox{and}\quad}
\newcommand{\with}{\quad\hbox{with}\quad}

\def\beginproof {\noindent {\it Proof. }}
\def\endproof {\hfill $\Box$}


\def\vp{{\underline v}}
\def\presspO{{{\underline p}_0}}
\def\presspun{{{\underline p}_1}}
\def\wp{{\underline w}}
\def\wpe{{\underline w}^{\e}}
\def\vapp{v_{app}^\e}
\def\vapph{v_{app}^{\e,h}}
\def\vbar{\overline v}
\def\barEE{\underline{\EE}}


\def\demo{d\'e\-mons\-tra\-tion}
\def\dive{\mathop{\rm div}\nolimits}
\def\curl{\mathop{\rm curl}\nolimits}
\def\cdv{champ de vec\-teurs}
\def\cdvs{champs de vec\-teurs}
\def\cdvdivn{champ de vec\-teurs de diver\-gence nul\-le}
\def\cdvdivns{champs de vec\-teurs de diver\-gence
nul\-le}
\def\stp{stric\-te\-ment po\-si\-tif}
\def\stpe{stric\-te\-ment po\-si\-ti\-ve}
\def\reelnonentier{\R\setminus{\bf N}}
\def\qq{pour tout\ }
\def\qqe{pour toute\ }
\def\Supp{\mathop{\rm Supp}\nolimits\ }
\def\coinfty{in\-d\'e\-fi\-ni\-ment
dif\-f\'e\-ren\-tia\-ble \`a sup\-port com\-pact}
\def\coinftys{in\-d\'e\-fi\-ni\-ment
dif\-f\'e\-ren\-tia\-bles \`a sup\-port com\-pact}
\def\cinfty{in\-d\'e\-fi\-ni\-ment
dif\-f\'e\-ren\-tia\-ble}
\def\opd{op\'e\-ra\-teur pseu\-do-dif\-f\'e\-ren\tiel}
\def\opds{op\'e\-ra\-teurs pseu\-do-dif\-f\'e\-ren\-tiels}
\def\edps{\'equa\-tions aux d\'e\-ri\-v\'ees
par\-tiel\-les}
\def\edp{\'equa\-tion aux d\'e\-ri\-v\'ees
par\-tiel\-les}
\def\edpnl{\'equa\-tion aux d\'e\-ri\-v\'ees
par\-tiel\-les non li\-n\'e\-ai\-re}
\def\edpnls{\'equa\-tions aux d\'e\-ri\-v\'ees
par\-tiel\-les non li\-n\'e\-ai\-res}
\def\ets{espace topologique s\'epar\'e}
\def\ssi{si et seulement si}

\def\pde{partial differential equation}
\def\iff{if and only if}
\def\stpa{strictly positive}
\def\ode{ordinary differential equation}
\def\coinftya{compactly supported smooth}

\def\app {{\rm app}}
\def\zrm {{\rm z}}
\def\rmint{{\rm int}}



  \title[Ekman boundary layers in a  domain with topography (draft)]
  {Ekman boundary layers in a   domain with topography}

\author[J.-Y. Chemin]{Jean-Yves  Chemin}
\address[J.-Y. Chemin]%
{Universite Claude Bernard Lyon 1, CNRS, Ecole Centrale de Lyon, INSA Lyon, Université Jean Monnet, ICJ UMR5208,
69622 Villeurbanne, France.}
\email{chemin@math.univ-lyon1.fr}

\author[F. Fanelli]{Francesco Fanelli}
\address[F. Fanelli]{Basque Center for Applied Mathematics, Alameda de Mazarredo 14, E-48009 Bilbao, Basque Country, Spain \\ and 
Ikerbasque -- Basque Foundation for Science, Plaza Euskadi 5, E-48009 Bilbao, Basque Country, Spain \\ and
Universit\'e Claude Bernard Lyon 1, ICJ UMR5208, F-69622 Villeurbanne, France.}
\email{ffanelli@bcamath.org}

\author[I. Gallagher]{Isabelle Gallagher}
\address[I. Gallagher]%
{DMA, \'Ecole normale sup\'erieure, CNRS, PSL Research University, 75005 Paris 
 \\
and UFR de math\'ematiques, Universit\'e Paris--Cit\'e, 75013 Paris, France.}
\email{gallagher@math.ens.fr}

\subjclass[2020]{35Q86 (primary); 
76D10, 
35B25, 
76U05 
(secondary).
}

\keywords{Navier-Stokes-Coriolis system; Ekman boundary layer; topography.}

\begin{abstract}
We investigate the asymptotic behaviour  of fast rotating incompressible  fluids with vanishing viscosity, in a  {three dimensional} domain with   topography including the case of land area.  Assuming the initial data is well-prepared, we  prove a convergence theorem     of the velocity fields to a two-dimensional vector field   solving     a linear, damped ordinary differential equation.
The proof is based on a  weak-strong uniqueness argument, combined
with an abstract result implying that the weak convergence of a family
of weak solutions to the Navier-Stokes-Coriolis system can be translated into a form of uniform-in-time convergence.
This argument yields 
strong convergence of the velocity fields, without a precise rate though.
\end{abstract}
\date \today

 \maketitle

\section{Introduction and statement of the main result}
 
The purpose of this text  is the  study of the asymptotic behavior  of  a fast rotating incompressible fluid  
moving in a three dimensional domain having a non-flat bottom. This can be seen as a rough model to describe ocean dynamics
in presence of topography: our problem formulation, which we are now going to present, is devised according to this point of view.

\subsection{Setting of the problem} \label{ss:setting}
We first define the domain in which the fluid evolves. We consider a smooth bounded and real valued function~$\vf$ on~$\R^2$ and we define the ocean area as
\beq
\label {defin_domain_general}
\Omega_\vf\eqdefa	 \bigl\{(x_\h,z)\in \R^2 \times \R\,/\ \vf(x_\h)>0\ \hbox{and}\ z\in ]-\vf(x_\h),0[\bigr\} \, .
\eeq
We assume~$\Omega_\vf$ to be connected. We also introduce the surface of the ocean as the set
$$
\cO_\vf\eqdefa \bigl\{x_\h \in \R^2\,/\ \vf(x_\h)> 0\bigr\}\,,
$$
together with  the land and shore areas
$$
\cL_\vf\eqdefa \R^2 \setminus\cO_\vf
\quad \mbox{and} \quad
\cS_\vf\eqdefa \vf^{-1} \big(\{0\}\big) \, .
$$
Anticipating what will be formalized in Subsection \ref{ss:result}, the land area should be thought of as an island, as depicted in Figure \ref{fig:island} below.
 \vspace{-0.3cm}
 \begin{figure}[H] 
\includegraphics[width=2in]{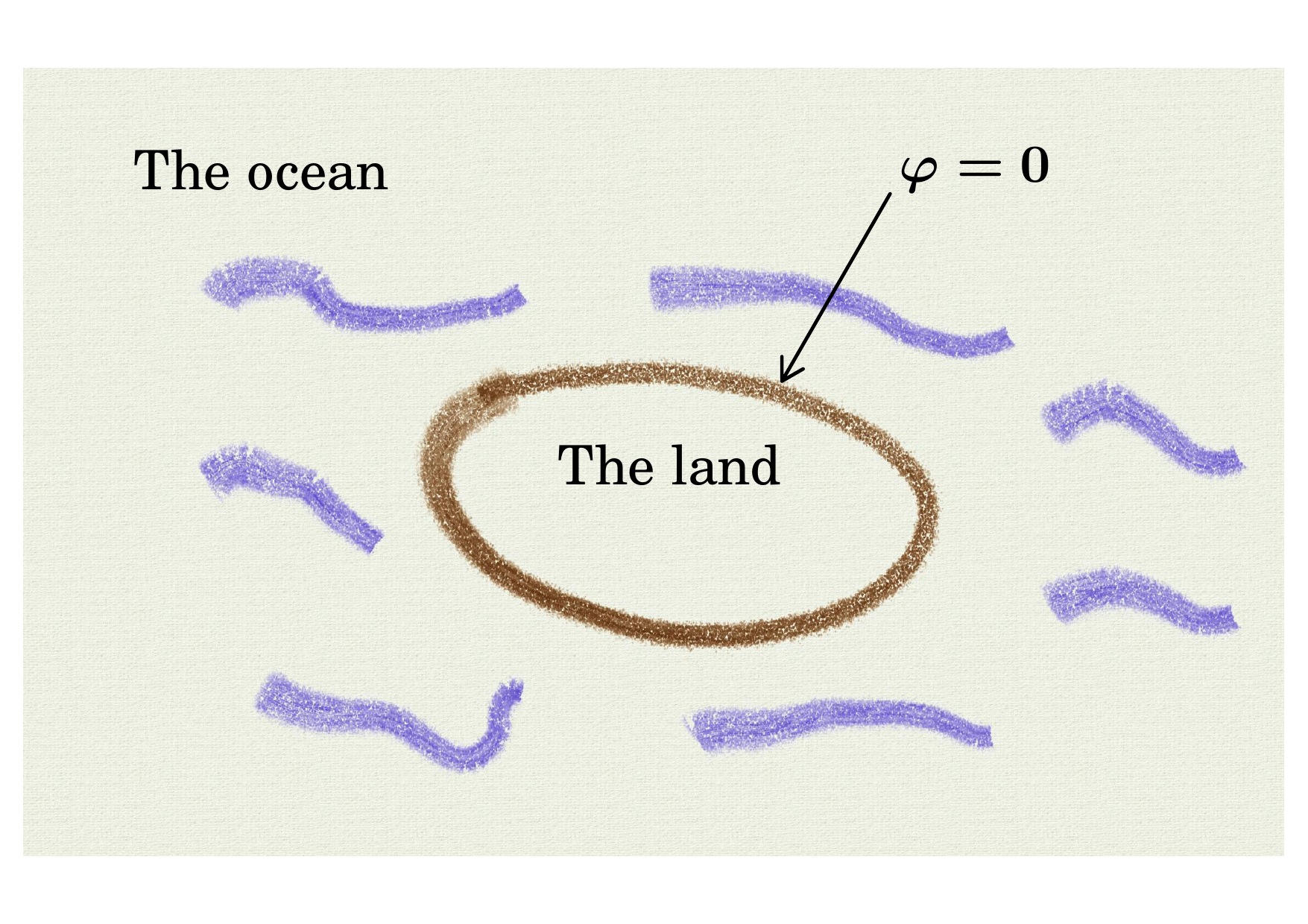} \hspace{2cm} 
\includegraphics[width=1.9in]{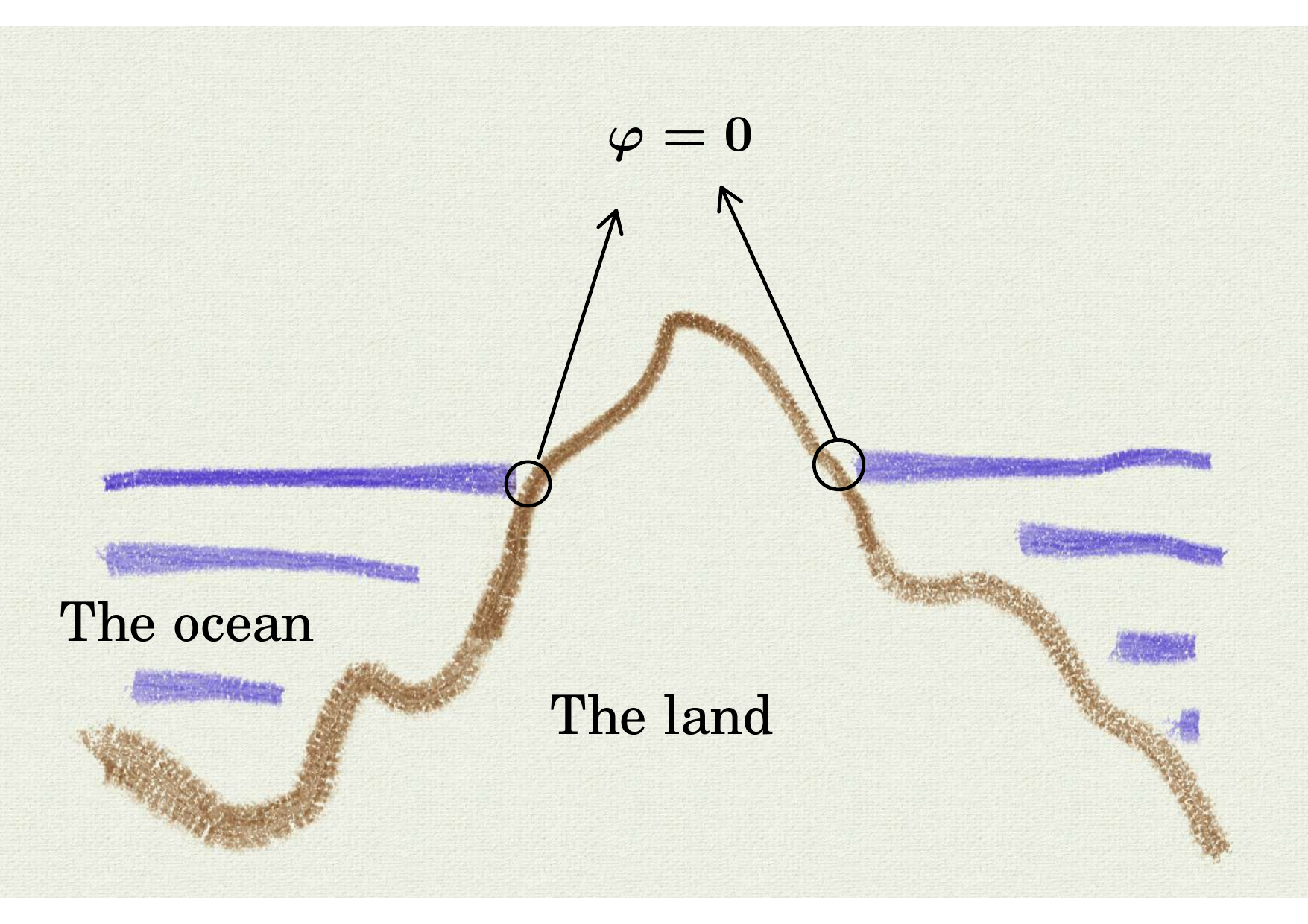} 
\caption{\small In brown, the land: top view (on the left) and side section (on the right).}
\label{fig:island}
\end{figure}



Given two positive real numbers~$\beta$ (fixed) and~$\e$ (the Rossby number, which will tend to zero),
we consider the  following {incompressible} Navier-Stokes-Coriolis system  in~$\Om_\vf$:
\[
\left\{
\begin{array}{c} 
\ds \partial_t u_\e - \e\b\D u_\e  +\frac 1 \e e_{\zrm} \wedge u_\e +u_\e\cdot \nabla u_\e =-\frac 1 \e \nabla p_\e\,,\\
\ds  {u_\e}_{|t=0} = u_0\,,\\
\ds \dive u_\e =0\andf {u_\e}_{|\partial\Om_\vf }=0\,  .
\end{array}
\right.
\leqno(NSC_\e)
\]
Here~$ u_\e$ is a three-component vector field representing the velocity of the fluid, and~$p_\e$ is a function representing its pressure.
The vector $e_{\zrm}$ is the unit vector directed along the vertical axis, i.e. $e_\zrm\, =\,^t(0,0,1)$, and the notation $a \wedge b$
stands for the usual external product in $\R^3$ of two vectors $a$ and $b$. The term $e_\zrm\wedge u_\e$ encodes the action of the Coriolis force
on the fluid; the factor $\e^{-1}$ in front of it is a typical scaling in oceanography.
{We refer to 
the physics books~\cite{CR-B, Ped} for more details (see also Part I of \cite{cdggbook})}.

{Throughout this paper}, we are going to adopt the following notations. For
any three-component vector field~$u$ we define the two-dimensional vector field $u_\h$ as the horizontal component
of~$u$, namely $u_\h=  (u^1,u^2 )$. The ``vertical'' component $u^3$ will be often denoted as~$u^\zrm$. Correspondingly, we define the horizontal gradient~$\nabla_\h= (\partial_{1},\partial_{2})$
and, for a {two-component} vector field~$v_\h=  (v^1,v^2 )$, its horizontal divergence as~$\divh v_\h = \partial_{1}v^1 + \partial_{2}v^2$.

Let us briefly recall the theory of  turbulent solutions\footnote { {We follow the terminology introduced by J. Leray in his seminal paper\ccite {lerayns} {on the incompressible Navier-Stokes system}. } } for such systems. We first precise the  functional spaces we work~with.
\begin{defin}
\label{vhv'}
{\sl
We shall denote by~$\cV$ the space of vector fields, the
components of which belong to~$H^1_0(\Om_\vf) $, and by~$\cV_\sigma$ the space of divergence free vector fields
in~$\cV$.  The closure of ~$\cV_\sigma$ in~$ L^2(\Om_\vf)$ will be denoted~$\cH$.
Finally, we shall denote by~$\cV'_\s$ the dual space of~$\cV_\sigma$. 
}
\end{defin}

{Next, we recall the definition of turbulent solutions to $(NSC_\e)$.}

\begin{defin}
\label {definweaksolbd}
{\sl
We shall say that $u_\e$ is a weak solution
of~$(NSC_\e)$ on~$\R^+\times\Om_\vf$ with an initial
data~$u_0$ in~$\cH$  if and only if $u_\e$ belongs to
the space
\[
C(\R^+;\cV'_\s )\cap L_{\rm loc}^{\infty}(\R^+;\cH)\cap  L^2_{\rm loc}(\R^+;\cV_\sigma)
\]
and, for any~$\Psi$ in~$C^1(\R^+;\cV_\sigma)$, the vector field~$ u_\e$
satisfies the following identity, for all~$t \geq 0$:
\begin{equation}\label{defin1.2_eq}
\begin{aligned}  &\int_{\Omega_\vf}( u_\e \cdot \Psi ) (t,x)\, dx  + 
\int_0^t\! \int_{\Omega_\vf} \biggl( \b\e\nabla u_\e : \nabla \Psi
- u_\e \cdot \Bigl( \partial_t \Psi +  \frac 1 \e e_{\zrm} \wedge \Psi
\Bigr)\biggr)(t',x)\, dx dt'\\
&-\int_0^t \int _{\Om_\vf}    \bigl(u_\e \otimes u_\e : \nabla \Psi\bigr)(t',x)\, dxdt' = 
\int_{\Omega_\vf}   u_0(x) \cdot \Psi(0,x) \, dx\,,
\with\\
&\nabla u_\e:\nabla \Psi \eqdefa \sum_{j,k = 1}^3\partial_{j}u_\e^k\partial_{j}\Psi^k
\quad\hbox{and}\quad
u_\e\otimes u_\e :\nabla \Psi \eqdefa \sum_{j,k=1}^3 u_\e^j u_\e^k \partial_{j}\Psi^{k}\, .
\end{aligned}
\end{equation}
}
\end{defin}

Let us recall a classical theorem of existence of turbulent solutions for such a system  (see for instance\ccite {cdggbook} {for its proof}).
\begin{theo}[\cite{cdggbook}]
\label{theoleraybd}
{\sl
Given~$u_0$ a vector field in~$\cH$, there exists a global weak solution $u_\e$ to~$(NSC_\e)$ in the sense
of Definition\refer  {definweaksolbd}.
Moreover, this solution satisfies the following energy inequality, for all~$ t\geq 0$:
\beq
\label {theoleraybdeq1}
\frac{1}{2} \int_{\Omega_\vf}  |u_\e(t,x)|^2 dx  +\e\b \int_0^t \int_{\Omega_\vf} |\nabla u_\e (t',x)|^2 dx dt'
 \leq    \frac{1}{2}
     \int_{\Omega_\vf}  |u_0(x)|^2 dx\, .
\eeq
}
\end{theo}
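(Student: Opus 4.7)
The statement is a classical Leray-type construction, so the plan is to follow the standard Galerkin/compactness procedure, only checking that the Coriolis term does not cause trouble (it is skew-symmetric, hence harmless in the energy identity) and that the geometry of $\Omega_\vf$ does not either (only the fact that $\cV_\sigma$ is separable and densely included in $\cH$ is used).

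First I would fix a Hilbert basis $(e_k)_{k\in\N}$ of $\cV_\sigma$, for instance given by eigenfunctions of the Stokes operator on $\Omega_\vf$ with Dirichlet boundary conditions (the Poincar\'e inequality is not needed for existence, only the spectral structure of the Stokes operator, which is standard for this class of Lipschitz domains). For each~$n$ I define the Galerkin projection $u_{\e,n}:=\sum_{k=1}^n c_{k,n}(t)\,e_k$ and require that
\begin{equation*}
\bigl(\partial_t u_{\e,n},e_k\bigr)_{L^2}+\b\e\bigl(\nabla u_{\e,n}:\nabla e_k\bigr)_{L^2}+\tfrac1\e\bigl(e_\zrm\wedge u_{\e,n},e_k\bigr)_{L^2}+\bigl(u_{\e,n}\cdot\nabla u_{\e,n},e_k\bigr)_{L^2}=0
\end{equation*}
for all $1\le k\le n$, with $u_{\e,n}(0)$ equal to the orthogonal projection of $u_0$ onto $\mathrm{Span}(e_1,\dots,e_n)$. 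This is an ODE system in the $c_{k,n}$ with locally Lipschitz right-hand side, so it admits a unique local solution.

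Next I would derive the energy identity by testing against $u_{\e,n}$ itself. Two cancellations are crucial: $(e_\zrm\wedge u_{\e,n})\cdot u_{\e,n}\equiv0$ pointwise (skew-symmetry of the Coriolis term), and $\int_{\Omega_\vf}(u_{\e,n}\cdot\nabla u_{\e,n})\cdot u_{\e,n}\,dx=0$ by divergence-freeness and the Dirichlet condition. This yields
\begin{equation*}
\tfrac12\|u_{\e,n}(t)\|_{L^2}^2+\b\e\int_0^t\|\nabla u_{\e,n}\|_{L^2}^2\,dt'\ \le\ \tfrac12\|u_0\|_{L^2}^2\virgp
\end{equation*}
so the approximate solutions are global in time, and bounded uniformly in $n$ in $L^\infty(\R^+;\cH)\cap L^2_{\rm loc}(\R^+;\cV_\sigma)$.

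The heart of the proof is then to upgrade these weak-$*$ bounds into strong compactness so as to pass to the limit in the quadratic term $u_{\e,n}\otimes u_{\e,n}$. I expect this to be the main obstacle, and it is handled as in the classical Leray theory: using the equation, I bound $\partial_t u_{\e,n}$ in $L^{4/3}_{\rm loc}(\R^+;\cV'_\s)$ (the worst term, $u_{\e,n}\cdot\nabla u_{\e,n}$, is controlled via the interpolation $L^4_tL^3_x\hookrightarrow L^\infty_tL^2_x\cap L^2_tL^6_x$ in three space dimensions), and then apply Aubin--Lions on bounded subdomains of $\Om_\vf$ to extract a subsequence converging strongly in $L^2_{\rm loc}(\R^+\times\Om_\vf)$ to some limit $u_\e$. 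Up to a further diagonal extraction I also obtain weak convergences $u_{\e,n}\rightharpoonup u_\e$ in $L^2_{\rm loc}(\R^+;\cV_\sigma)$ and weak-$*$ convergence in $L^\infty(\R^+;\cH)$.

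Finally I would pass to the limit in the Galerkin formulation tested against~$\Psi\in C^1(\R^+;\cV_\sigma)$: the linear terms (including the Coriolis one) pass by weak convergence, and the nonlinear term passes thanks to the strong $L^2_{\rm loc}$ convergence combined with the uniform $L^2_tL^6_x$ bound; the density of $\bigcup_n \mathrm{Span}(e_1,\dots,e_n)$ in $C^1(\R^+;\cV_\sigma)$ lets me reach all admissible test fields. This produces the identity~\eqref{defin1.2_eq}. The time continuity with values in~$\cV'_\s$ is then recovered from the equation itself. For the energy inequality~\eqref{theoleraybdeq1}, I use lower semicontinuity: $\|u_\e(t)\|_{L^2}^2\le\liminf_n\|u_{\e,n}(t)\|_{L^2}^2$ and similarly for $\|\nabla u_\e\|_{L^2_tL^2_x}^2$, combined with $\|u_{\e,n}(0)\|_{L^2}\le\|u_0\|_{L^2}$, and this closes the proof.
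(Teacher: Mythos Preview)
Your sketch is a correct outline of the standard Leray--Galerkin construction, with the right observation that the Coriolis term is skew-symmetric and drops out of the energy balance. However, the paper does not prove this statement at all: it is quoted as a classical result, with the proof deferred to the reference~\cite{cdggbook}. So there is nothing to compare against here; your proposal is essentially the argument one would find in that reference.
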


We shall focus here on the case of well-prepared initial data~$u_0$, meaning that it is a divergence free vector field in~$\cH$ and satisfies~$e_{\zrm} \wedge u_0 =\nabla p_0$ for some function~$p_0$.
Considering a collection of turbulent solutions~$(u_\e)_{0<\e\leq \e_0}$ in the sense of the above definition, what can be said about its {asymptotic} behaviour in the limit when~$\e$ tends to~$0$?

\subsection{{The flat case, and related studies}} \label{ss:flat}
In the case when there is no topography, i.e.  the case when~$\vf(x_\h)\equiv H
$, where
~$H$ is a given positive real number, {the previous assumption on $u_0$} amounts to assuming that it only depends on  the horizontal variables.
{In this situation, the asymptotic dynamics in the limit of vanishing $\e$ is described by the}
following theorem, proved by E. Grenier and N. Masmoudi in\ccite {grenier_masmoudi}.
\begin{theo}[\cite{grenier_masmoudi}]
\label {grenier_masmoudi} 
{\sl
 {Assume that~$\vf(x_\h)\equiv 
 H>0$ and that~$u_0 =  (  u_{0,\h} ,0)$ is a divergence free vector field in~$\cH$ which does not depend on~$z$. If~$u_0$ is small enough in~$L^\infty(\R ^2)$}, then
$$
\lim_{\e\rightarrow 0} u_\e = \overline u \qquad \hbox{in}\quad  L^\infty\bigl(\R^+;L^2(\R^2 \times ]-h,0[)\bigr) \, , 
$$
where~$\oline u = (\overline u_\h, 0)$ and $\oline u_\h$ is the solution of the following 2D damped Euler equation on~$\R^2$:
\begin{equation}
\label {dampedEuler}
\left\{
\begin{array}{c}
\ds \partial_t \overline u_\h +
\lambda  \overline u_\h +\overline u_\h\cdot \nabla \overline u_\h =-\nabla_\h\overline p\\\\
\dive_\h \overline u_\h =0\andf{ \overline u_\h}_{|t=0} = u_{0,\h} 
\end {array}\right. \with
\lambda\eqdefa \frac {\sqrt {2\b}} 
H \,\cdotp
\end{equation} 
}
\end{theo}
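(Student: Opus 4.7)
The natural approach is to combine an Ekman boundary layer construction with a weak-strong uniqueness argument. I would build an approximate solution $u_\e^{\rm app}$ matching the interior profile $\oline u = (\oline u_\h, 0)$ in the bulk and correcting the lack of no-slip condition via exponentially localized profiles near $z=0$ and $z=-H$, then compare $u_\e^{\rm app}$ with the weak Leray solution $u_\e$ given by Theorem \ref{theoleraybd} in $L^\infty_t L^2$.

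First I would set up the boundary layer correctors. In the stretched variables $\zeta^{\rm top} = -z/\sqrt{\e\b}$ and $\zeta^{\rm bot} = (z+H)/\sqrt{\e\b}$, one seeks horizontal profiles $U_\h^{\rm top/bot}(t,x_\h,\zeta)$ solving at leading order the linear Ekman ODE $-\b\, \partial_\zeta^2 U_\h + e_\zrm \wedge U_\h = 0$ with $U_\h|_{\zeta=0} = -\oline u_\h$ and exponential decay as $\zeta \to +\infty$. This selects the explicit oscillating-exponential profile
$$
U_\h = -e^{-\zeta/\sqrt{2\b}}\Bigl( \cos(\zeta/\sqrt{2\b})\,\oline u_\h + \sin(\zeta/\sqrt{2\b})\, e_\zrm \wedge \oline u_\h\Bigr),
$$
and the incompressibility constraint $\dive\, u_\e^{\rm app}=0$ then forces an $O(\sqrt{\e\b})$ vertical correction whose value at the edge of the layer is a multiple of $\curlh\oline u_\h$. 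Matching the two boundary-layer vertical fluxes with the interior flow yields exactly the Ekman pumping condition, which, combined with the leading-order horizontal balance in $\Omega_\vf$, produces the 2D damped Euler system \refeq{dampedEuler} with friction coefficient $\lam = \sqrt{2\b}/H$. Plugging $u_\e^{\rm app}$ back into $(NSC_\e)$ then gives a residual $R_\e$ which is $O(\sqrt{\e})$ in $L^1_{\rm loc}(\R^+;L^2)$ thanks to the $O(\sqrt{\e\b})$ thickness of the correctors and the smoothness of the slow profile.

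The comparison step is carried out as follows: setting $w_\e = u_\e - u_\e^{\rm app}$ and combining the Leray energy inequality \refeq{theoleraybdeq1} for $u_\e$ with the smooth energy identity for $u_\e^{\rm app}$, I would obtain, after cancellation of the skew-symmetric Coriolis contribution, an estimate of the form
$$
\|w_\e(t)\|_{L^2}^2 + \e\b \int_0^t\! \|\nabla w_\e\|_{L^2}^2 \,dt' \,\leq\, o(1) \,+\, C\int_0^t\! \|u_\e^{\rm app}\|_{L^\infty}\Bigl(\|w_\e\|_{L^2}^2 + \|w_\e\|_{L^2}\|\nabla w_\e\|_{L^2}\Bigr) dt'.
$$
The main obstacle is to control the trilinear interactions involving the vertical derivatives of the boundary layers, which are of order $(\e\b)^{-1/2}$: the dangerous term $\int w_\e^\zrm\,(\partial_z U_\h^{\rm top})\cdot w_\e^\h\,dx$ is tamed by exploiting that $w_\e^\zrm$ vanishes at $z=0$, which allows a Hardy-type inequality against the exponential Ekman profile trading one vertical derivative of $w_\e^\zrm$ for the singular $(\e\b)^{-1/2}$ factor. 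The $L^\infty$ smallness assumption on $u_0$ is precisely what allows, through propagation of the $L^\infty$ bound on $\oline u_\h$ under the damped 2D Euler flow, to absorb the remaining dangerous contribution into the viscous dissipation term $\e\b\|\nabla w_\e\|_{L^2}^2$. A standard Gronwall argument then yields $\|w_\e\|_{L^\infty(\R^+;L^2)} \to 0$, and hence the announced convergence of $u_\e$ to $\oline u$.
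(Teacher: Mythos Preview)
Your proposal is correct and follows the classical Grenier--Masmoudi strategy (Ekman boundary layer ansatz plus a weak--strong energy comparison with the Hardy-type trick to absorb the singular vertical derivatives of the correctors), which is precisely the template the paper adopts for its own main result in the non-flat case; the paper itself does not give a proof of this statement, merely citing~\cite{grenier_masmoudi}. One small slip: your stretched variable should be $\zeta = z/\sqrt{E}$ with $E=2\b\e^2$ (boundary layer thickness $\sim \e\sqrt{2\b}$), not $z/\sqrt{\e\b}$, so that the leading-order balance reads $-\tfrac12\partial_\zeta^2 U_\h + e_\zrm\wedge U_\h = 0$; this is cosmetic and does not affect the argument.
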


Let us make some comments on this statement,
{which will be interesting to compare with our main result (see Theorem~\ref{Ekman_topo_radial_0} below).} The first point to notice is that the convergence
of~$u_\e$ to $\oline u$ is strong. Moreover, the limit is a two-dimensional vector field which does not depend on the vertical variable~$z$. Thus, it does not fit with the Dirichlet boundary condition, and this leads to the introduction of correctors called boundary layers: these correctors, denoted~$U_{\rm BL,\e}$, compensate the Dirichlet boundary condition while   tending to~$0$  in~$L^\infty\big(\R^+;L^2(\R^2 \times ]-h,0[)\big)$. It turns out that their energy
$$
\e \int_0^t \|\nabla U_{\rm BL,\e}(t')\|_{L^2}^2\, dt'
$$
does not tend to~$0$ and is the source of the damping term $\lambda \oline u_\h$ in the limit  system\refeq {dampedEuler}. This damping is called Ekman pumping and the number~
$\lambda$ is the Ekman pumping coefficient. Let us notice that it is proportional to the inverse of the depth.

\medbreak

{Many works have generalized Theorem~\ref{grenier_masmoudi} in several directions. In \cite{Masm} N. Masmoudi
proved an analogous result in the general case of ill-prepared initial data.
The smallness requirement on the initial data was removed by F. Rousset \cite{Rouss} for well-prepared data, and by F. Rousset and N. Masmoudi \cite{Masm-Rouss}
for the general case.
In \cite{C-D-G-G_ESAIM} the authors considered the case of anisotropic diffusion, where the operator $-\beta\e\Delta$ in $(NSC_\e)$ is replaced by
$-\nu\Delta_\h - \beta\e\partial_z^2$, with $\nu$ a positive constant,
and proved convergence to a $2$-D damped Navier-Stokes equation on $\R^2$ in the framework of ill-prepared data,
by means of dispersive estimates. We refer to the book \cite{cdggbook} for a collection of those results and related proofs.}

{Ekman boundary layers have been studied also in some special cases, for instance when the effect of the Coriolis force becomes degenerate, a
typical situation near the equator, in which case the system $(NSC_\e)$ must be replaced by the so-called $\beta$--plane model (see e.g. \cite{GSR2}).
In~\cite{Dalibard3}, A.-L. Dalibard and L. Saint-Raymond investigated the effect of the Ekman layers on stationary solutions and propagation of Poincar\'e waves
for the $\beta$--plane model, when set in a thin layer.
In \cite{Dalibard2} the same authors considered, instead, the action of a forcing term at the top boundary,
in resonance with the Coriolis force, and studied its effect on the structure
of the solutions to a linearized version of the system $(NSC_\e)$: they exhibited the presence of an additional boundary layer, which coexists with
the Ekman one. 
From an applied standpoint, such a forcing term acting at the top boundary of the domain encodes the effects of the wind stress at the surface of the ocean.
For more results on this situation, we refer to \cite{saint-raymond} by L. Saint-Raymond, who characterized the asymptotic dynamics
of the original non-linear system.}

{We remark that a few studies have been conducted on Ekman boundary layer effects in the context of non-homogeneous fluids, and all  of these
treat the setting of a flat boundary. In the case of compressible flows, paper
\cite{B-D-GV} by D. Bresch, B. Desjardins and D. G\'erard-Varet establishes a (conditional) convergence result for well-prepared initial data,
together with a description
of the Ekman layers; the result was later extended in \cite{B-F-P} to consider a strong stratification regime.
In the context of incompressible fluids, instead, work
\cite{Brav-F} obtained the rigorous derivation of a system encoding the Ekman pumping effect
through a singular limit starting from the density dependent Navier-Stokes-Coriolis system,
set in a thin domain and supplemented with Navier-slip boundary conditions; yet, a precise description
of the structure of the solutions in the boundary layer was elusive within that approach.}

{To conclude this overview, let us mention that Ekman boundary layers are created at the surface and bottom boundaries of the ocean. However,
other bondary layers, the so-called Munk layers, exist in the proximity of the shores, whenever a vertical wall appears at the boundary.
This situation has also been the object of some investigation: we refer for instance to the above mentioned work \cite{B-D-GV} by D. Bresch, B. Desjardins and D. G\'erard-Varet, and to~\cite{Dalibard4} by A.-L. Dalibard and L. Saint-Raymond. In the present work, we avoid the appearance of Munk boundary layers by imposing
that the function $\varphi$ encoding the topography must be smooth and bounded over $\mc O_\varphi$, together with all its derivatives.}

\medbreak
{In contrast with the huge amount of literature related to the study of Ekman boundary layers for fast rotating fluids, less results are
available in the case of a varying bottom. To the best of our knowledge, the first study in this direction was due to D. G\'erard-Varet in~\cite{gerard_varet_2003}.
There, the author considered the framework of small, periodic perturturbations of the flat case, of size $O(\e)$.
The asymptotic study in \cite{gerard_varet_2003} was conducted for well-prepared data and yielded a result similar to
Theorem~\ref{grenier_masmoudi}, yet at the price of higher complications
in the arguments of the proof. We refer to \cite{gerard_varet_2006} by E. Dormy and D. G\'erard-Varet for additional investigations in that setting.
On a different but related context, we mention that, in \cite{Dalibard1}, A.-L. Dalibard and D. G\'erard-Varet addressed
the well-posedness of the non-linear static problem
in the domain~$\big\{x\in\R^3\, / \; \gamma(x_\h)< x^\zrm\big\}$, namely in absence of an upper boundary but for a generic function $\gamma$ of order $O(1)$
(see also \cite{Dal-Prange} by A.-L. Dalibard and C. Prange for a preliminary investigation of the linear problem).}

%

\subsection{Statement of the main result} \label{ss:result}
In this paper we are interested in the case when the topography is not flat (actually nowhere flat). {In relation to the discussion
of the previous subsection, we point out that we will not require here any periodicity, nor symmetry, nor smallness of the function describing the topography.}
{However, the geometric set up must be} chosen in a special way, which we shall describe here and justify later.

The function~$\vf$ introduced in\refeq  {defin_domain_general}, which represents the depth of the ocean, {is assumed to} satisfy the following condition: a function~$F$ exists such that 
\beq
\label {Cond_geo_fund}
\forall x_\h\in \cO_\vf\,, \quad \ |\nabla_\h \vf(x_\h)|^2 =F(\vf(x_\h))\,  .
\eeq
{By choosing $\vf$ as a composition, i.e. $\vf = \f\circ\rho$, we see that it is enough for $F$ to depend only on the function $\rho$. Indeed, if~$x_\h$ is a point where the gradient of~$\vf$  does not vanish, then both $\phi'(\rho(x_\h))$ and $\nabla_\h\rho(x_\h)$ do not vanish either;
then condition \eqref{Cond_geo_fund} yields
\[
|\nabla_\h\rho(x_\h)|^2 = \frac{F(\f(\rho(x_\h)))}{\big(\phi'(\rho(x_\h))\big)^2} = G(\rho(x_\h))\,.
\]
Notice that, in order to write the last equality, it is enough that $F$ depends on $\rho$, as claimed. 
In addition, if $x_\h$ is a point 
as above, then the function~$G$ does not vanish near the value~$\rho(x_\h)$.
Let~$H$ be  a primitive of~$G^{-\frac 12}$ which also has value non~$0$, then~$|\nabla_\h H(\rho(x_\h))|=1$.
In what follows, we will need to distinguish
between the function describing the depth, namely $\vf$, and the one encoding the geometry, namely $\rho$, and this remark will play an important role.
As a conclusion, it is therefore natural to assume that the function~$\vf$ is of the~form}
\beq
\label {Cond_geo_fund_2}
\vf(x_\h)= \f \big(\rho(x_\h)\big)\,, \with  \forall x_\h \in \cO_\vf\,,\ |\nabla_\h\rho(x_\h)|=1\, .
\eeq

Let us present a large class of examples of such topography. We consider a compact convex set~$\G$ of~$\R^2$ such that the boundary  of~$\G$ is a smooth  curve. Let us define the following function:
 $$
 \rho(x_\h)\eqdefa {\bf d}(x_\h,\G)\,,
 $$
 where the distance is the euclidean distance on~$\R^2$. We claim that~$\rho$ is smooth on~$\R^2\setminus\G$ and~that
 \beq
 \label {fod_grad_rho} \forall x_\h \in \R^2\,,\qquad |\nabla_\h \rho  (x_\h)|=1\, .
 \eeq 
Let us justify this. We consider a parametrization~$\g$ of the boundary of~$\G$ such that 
$$
\g\ :\  \R/L\Z \longmapsto \G
$$
  is one to one, smooth	and satisfies~$|\g'(\omega)|=1$   (here~$L$ is the  length of the boundary of~$\G$). Let us consider
the map
$$
X:\quad \left \{\begin {array}{ccl}
(\R/L\Z )\times ]0,\infty[ & \longrightarrow & \R^2\setminus\G\\
 (\omega,\lam) &\longmapsto & \g'(\omega)-\lam (\g'(\omega))^\perp\, .
\end {array}
\right.
$$
The point is that~$X$ is  {one-to-one}  and onto. Moreover, it is smooth and its differential is invertible. In addition,
it is obvious that~$\rho (X(\omega,\lam)) = \lam$.
Thus we infer that
$$
\nabla_\h \rho (X(\omega,\lam)) \cdot\partial_\omega X(\omega,\lam)=0 \andf  \nabla_\h \rho  (X(\omega,\lam)) \cdot  (\g'(\omega))^\perp=-1\, ,
$$
 where, for any two component vector field~$u = (u^1,u^2)$, we have denoted~$u^\perp\eqdefa  (-u^2,u^1)$.
Therefore, $ \nabla_\h\rho (X(\omega,\lam))$ is collinear to~$(\g'(\omega))^\perp$ and has no component in the direction~$\g'(\omega)$. As the norm of~$\g'(\omega)$ is~$1$, we get\refeq {fod_grad_rho}.

\medbreak

In the following, we thus restrict ourselves to the particular case when {the isobaths are parallel to the coast of the island $\Gamma$,
i.e. $\varphi(x_\h) = \phi\big(\rho(x_\h)\big)$.
In addition, we consider a thickening of the land in order to avoid the singularity of the distance function at the shore.
More precisely, 
 we consider two positive real numbers~$\rho_0$ and~$H$, 
and we assume that~$\f$ is a smooth function on~$[\rho_0,\infty[$,  bounded  as well as all its derivatives,  such that
\beq
\label {Defin_Domain_0}
\f([\rho_0,\infty[) \subset [0,H] \andf \f^{-1} (0)=\{\rho_0\}\, .
\eeq
Remark that this excludes the case when~$\f$ is a constant.
 Finally, we  define
\beq
\label {Defin_Domain}
{\cO\eqdefa \R^2\setminus \big(\rho^{-1}( [0,\rho_0])\big)} \andf \Om_\f \eqdefa \bigl\{(x_\h,z)\in  {\mc O \times \R}\,/ -\f(\rho(x_\h)) <z<0\bigr\}.
\eeq
{Figure \ref{fig:new_land} below represents the thickened land and the (new) ocean surface $\mc O$.}
We   assume in the following  that the set~$\bigl\{x_\h\in \cO\,/\ \f'(\rho(x_\h))=0\bigr\}$ is negligible in~$\R^2$  {(see for instance
Figure~\ref{fig:island}, the picture on the right, and Figure \ref{fig:cut-off} in Section \ref{s:ansatz})}.

 \vspace{-0.5cm}
 \begin{figure}[H] 
\includegraphics[width=2in]{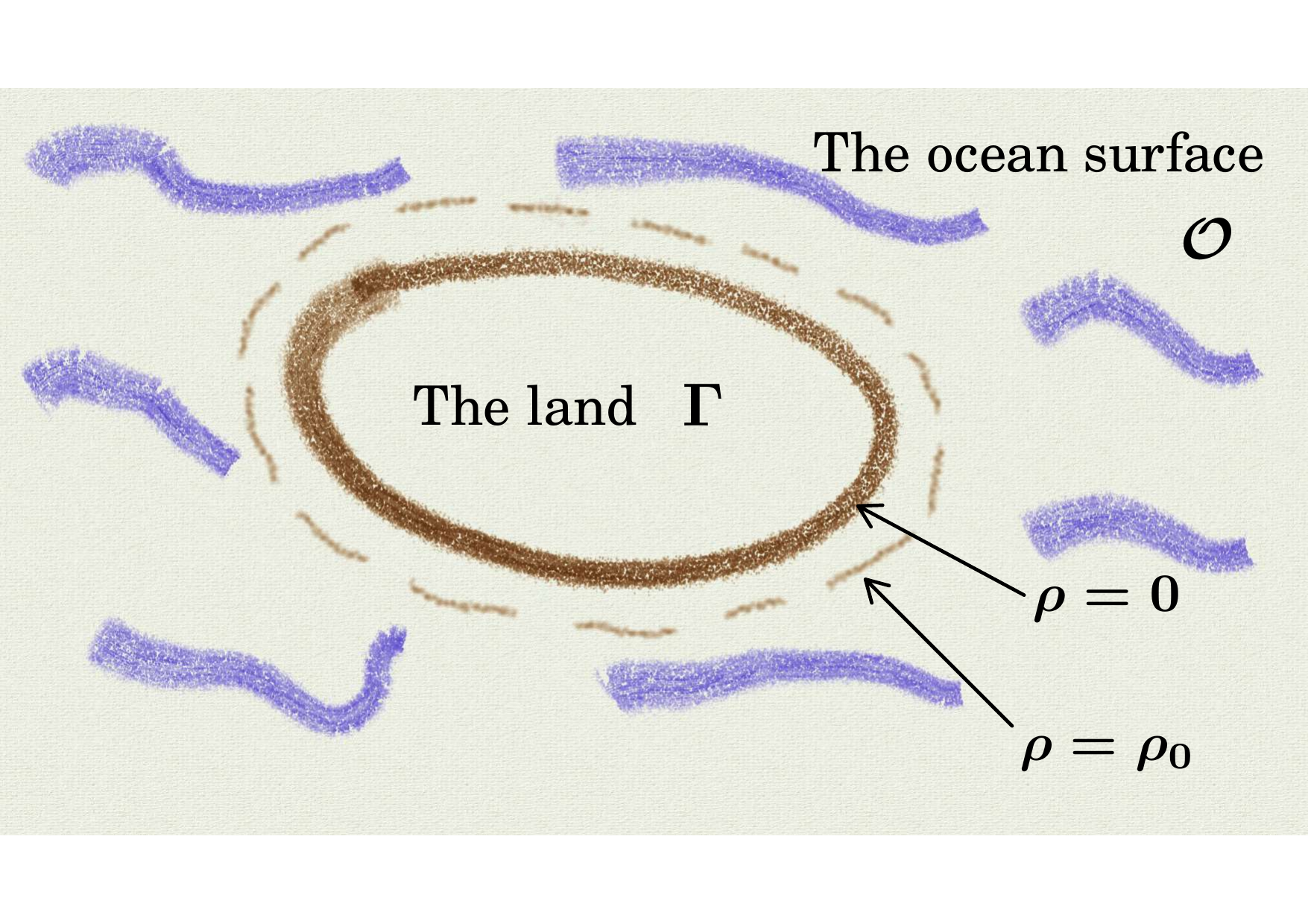}
\caption{\small The thickening of the land (confined by the dashed line) and the (new) ocean surface $\mc O$.}
\label{fig:new_land}
\end{figure}

We now consider the system~$(NSC_\e)$ set in~$\Om_\f$, and Definitions~\ref{vhv'} and~\ref{definweaksolbd}  
 {have to be} understood with~$\Om_\varphi$ replaced with~$\Om_\f$.

\medbreak
Anticipating on later notation,  in the next statement  we denote with  a superscript~$\theta$ the second component of a vector field in the orthonormal basis 
$$
\bigl(\nabla_\h \rho (x_\h), \nabla_\h^\perp\rho(x_\h), e_\zrm\bigr)\, .
$$

Our main  result is the following. 
\begin{theo}
\label {Ekman_topo_radial_0}
\sl
Consider a domain~$\Om_\f$ as defined in\refeq  {Defin_Domain}. There is a positive constant~$c$ such that the following holds. Let~$u_0 $ be a divergence free vector field in~$\cH$ 
of the form 
\begin{equation}\label{wellprepared initial data}
u_0(x_\h,z) = u_0(x_\h) \eqdefa  u_0^\theta\big(\rho(x_\h)\big)\nabla_\h^\perp\rho(x_\h)
\end{equation}
where~$u_0^\theta$ is a  smooth function {defined on~$[\rho_0,\infty[$}. Let us assume in addition that~$u_0$ and all its    its derivatives belong to~$L^2(\cO)$, and that the function~$ u_0^\theta/\f$  is bounded. 
If~$\|u_0\|_{L^\infty (\mc O)} \leq c \beta$
and if~$(u_\e)_{0< \e\leq \e_0}$ is a family of solutions of~$(NSC_\e)$ associated with the  initial data~$u_0$ as constructed in Theorem~\ref{theoleraybd},  then
\begin{equation}
\label {form of the limit}
\begin{aligned}
&\lim_{\e\rightarrow 0} \|u_\e-\overline u \|_{L^\infty_{\rm loc}(\R^+;L^2(\Om_\f ))}  =  0\,, \with {\overline u(t,x_\h,z) = \overline u ^\theta\big(t,\rho(x_\h)\big)\nabla_\h^\perp\rho(x_\h)} \, , \\
\overline u ^\theta (t,r) &    \eqdefa   { u^\theta_0(r)}    \exp \bigl( -t\lambda_\f (r)\bigr)
 \andf 
  \lambda_\f(r)    \eqdefa    \frac {\sqrt {2\b}}   {\f(r)} \,\frac {1+\sqrt[4]{1+\f '^2(r)}  } {2} \,\cdotp
\end{aligned}
\end{equation}
\end{theo}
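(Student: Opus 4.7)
The plan is to follow the weak-strong strategy announced in the abstract. I build a sufficiently accurate approximate solution $u_\e^{\rm app}$ incorporating Ekman boundary layer correctors at both the flat top and the curved bottom of $\Om_\f$, I control the difference $u_\e-u_\e^{\rm app}$ in $L^2$ via an energy argument in the spirit of weak-strong uniqueness, and I invoke the abstract result alluded to in the abstract translating weak convergence into uniform-in-time convergence to reach the conclusion.

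Concretely, I would take the ansatz
\[
u_\e^{\rm app}(t,x) = \overline u(t,x_\h) + U_\e^{\rm top}\bigl(t,x_\h,-z/\sqrt{\e\b}\bigr) + U_\e^{\rm bot}\bigl(t,x_\h,(z+\f(\rho(x_\h)))/\sqrt{\e\b}\bigr) + \e\, r_\e,
\]
with the boundary layer correctors $U_\e^{\rm top}$ and $U_\e^{\rm bot}$ decaying exponentially in their stretched variables, and $r_\e$ a small divergence-free remainder adjusting incompressibility. At the flat top, $U_\e^{\rm top}$ solves the classical Ekman ODE $-\b\partial_\zeta^2 U + e_\zrm\wedge U = 0$. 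At the curved bottom, straightening via $\zeta = (z+\f(\rho))/\sqrt{\e\b}$ and using $|\nabla_\h\rho|=1$ shows that the dominant part of the Laplacian becomes $(1+\f'^2(\rho))\partial_\zeta^2/(\e\b)$, so the leading-order equation for $U_\e^{\rm bot}$ is the tilted Ekman system
\[
-\b\bigl(1+\f'^2(\rho)\bigr)\partial_\zeta^2 U_\e^{\rm bot} + e_\zrm\wedge U_\e^{\rm bot} = 0,
\]
with matching condition $U_\e^{\rm bot}|_{\zeta=0}=-\overline u$; computing the pumping velocity at the edge of this layer from the horizontal divergence of the tangential profile yields the extra factor $\sqrt[4]{1+\f'^2(\rho)}$ characteristic of the curved bottom.

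The interior profile $\overline u$ is then determined by the solvability condition that the vertical velocities injected by the two layers be compatible with incompressibility in the interior. Guided by the initial datum, I look for $\overline u$ of the form $\overline u^\theta(t,\rho(x_\h))\,\nabla_\h^\perp\rho(x_\h)$; the resulting condition reduces to
\[
\partial_t\overline u^\theta + \lambda_\f(\rho)\,\overline u^\theta = 0,
\]
with $\lambda_\f$ the sum of the flat-top pumping $\sqrt{2\b}/(2\f)$ and the curved-bottom pumping $\sqrt{2\b}\sqrt[4]{1+\f'^2}/(2\f)$, exactly as in \eqref{form of the limit}. The hypothesis $u_0^\theta/\f\in L^\infty$ is what guarantees that the boundary layer profiles stay uniformly controlled up to the shore, where $\f$ vanishes.

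To close the argument, plugging $u_\e^{\rm app}$ into $(NSC_\e)$ produces a residual that tends to zero in $L^2_{\rm loc}(\R^+;\cV'_\s)$. For $w_\e = u_\e - u_\e^{\rm app}$, combining the energy inequality of Theorem \ref{theoleraybd} with the weak formulation satisfied by $u_\e^{\rm app}$ in a weak-strong uniqueness fashion leads to a Gronwall-type estimate on $\|w_\e\|_{L^2}^2$, from which, together with the smallness assumption $\|u_0\|_{L^\infty}\leq c\b$ (used to absorb the dangerous nonlinear term $\int w_\e\otimes w_\e:\nabla u_\e^{\rm app}$ into the Ekman dissipation), one deduces $\|w_\e\|_{L^\infty_{\rm loc}(L^2)}\to 0$; the abstract upgrading result then concludes. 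The main obstacle -- and the technical heart of the argument -- is the construction of the bottom Ekman layer with the correct pumping coefficient: the tilt of the outer normal away from $e_\zrm$ couples horizontal and vertical components of the profile through both the diffusion operator and the divergence-free constraint, and carefully disentangling this coupling in the frame $(\nabla_\h\rho,\nabla_\h^\perp\rho,e_\zrm)$ so as to extract precisely the factor $\sqrt[4]{1+\f'^2(\rho)}$ is the delicate step.
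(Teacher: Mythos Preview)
Your overall architecture is right---approximate solution with Ekman layers, weak-strong energy argument, smallness to absorb the quadratic remainder---but there is a genuine gap in how you treat the nonlinearity, and it is precisely the point where the ``abstract result'' you invoke at the end must in fact do essential work \emph{inside} the energy estimate, not after it.

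You claim that plugging $u_\e^{\rm app}$ into $(NSC_\e)$ leaves a residual tending to zero in $L^2_{\rm loc}(\R^+;\cV'_\s)$. This is false in the geometry at hand. The interior nonlinear term is
\[
\overline u\cdot\nabla\overline u \;=\; (u^\theta)^2\,\nabla_\h^\perp\rho\cdot\nabla_\h(\nabla_\h^\perp\rho)
\;=\; -\,(u^\theta)^2\,\Delta_\h\rho\;\nabla_\h\rho,
\]
and this is a gradient only if $\Delta_\h\rho$ is a function of $\rho$; by Segre's theorem, combined with $|\nabla_\h\rho|=1$, that forces $\rho$ to be radial or linear. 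In the setting of the theorem it is neither, so $\overline u\cdot\nabla\overline u$ is \emph{not} absorbed by the pressure and the residual does \emph{not} go to zero. The paper therefore builds $U_{\rm app,\e}$ as an approximate solution of the \emph{linear} Stokes--Coriolis system only, and in the weak-strong computation decomposes
\[
\int u_\e\otimes u_\e:\nabla U_{\rm app,\e}
= \int (\delta_\e\cdot\nabla U_{\rm app,\e})\cdot\delta_\e
+ \int (U_{\rm app,\e}\cdot\nabla U_{\rm app,\e})\cdot u_\e.
\]
The first integral is your ``dangerous'' term, handled by smallness as you say. The second one is the new obstruction: it does not vanish, but a separate proposition shows $U_{\rm app,\e}\cdot\nabla U_{\rm app,\e}$ is close in $L^1_tL^2_x$ to $-(u^\theta)^2\Delta_\h\rho\,\nabla_\h\rho$, i.e.\ a function times $\nabla_\h\rho$. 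The Taylor--Proudman constraint forces any weak limit of $u_\e$ to be colinear to $\nabla_\h^\perp\rho$, hence orthogonal to $\nabla_\h\rho$; an Ascoli-type compactness argument then upgrades this weak orthogonality to uniform-in-time vanishing of $\int_0^t\int (g\nabla_\h\rho)\cdot u_\e$. This is where the ``abstract result'' enters, and it is not a cosmetic upgrade at the end but the mechanism that kills an $O(1)$ term in the Gr\"onwall inequality. Without it your energy estimate does not close.

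A smaller point: your bottom-layer ODE $-\b(1+\f'^2)\partial_\zeta^2 U + e_\zrm\wedge U = 0$ is symmetric in the $\rho$ and $\theta$ components and would produce a factor $\sqrt{1+\f'^2}$, not $\sqrt[4]{1+\f'^2}$. What you are missing is that the order-$\e$ pressure $P_1^{\rm bot}$ is nonzero at the curved bottom (unlike at the flat top) and feeds back into the $\nabla_\h\rho$-component of the momentum equation; after eliminating it together with the divergence constraint $U^{\zrm,\bot}=-\f' U^{\rho,\bot}$, the reduced $2\times 2$ system is asymmetric, with coefficient $(1+\f'^2)^2$ on the $\rho$-equation and $(1+\f'^2)$ on the $\theta$-equation. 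That asymmetry is what yields the fourth root. You flagged this as the delicate step, but the ODE you wrote does not yet see the difficulty.
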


 Some comments on the previous theorem are in order.
First, let us explain why we make the hypothesis\refeq {Cond_geo_fund}.  As we shall see later on  (see in particular Proposition~\ref {Taylor-Proudman} and Remark\refer {proof_TP_remark1}),
in a general domain~$\Om_\vf$, an element of the weak closure of the family~$(u_\e)_{0<\e\leq\e_0}$  is of the form~$G(\vf)\nabla_\h^\perp\vf$.  In Definition\refeq  {form of the limit}, the term~$\bigl(\f'\big(\rho(x_\h)\big)\bigr)^2$ {must} be understood as~$|\nabla _\h\vf(x_\h)|^2$ (recall
that we have taken $\vf = \f\circ \rho$). Then Condition\refeq  {Cond_geo_fund} seems mandatory,   {hence Condition~\eqref{Cond_geo_fund_2} as well}.

Let us remark that the equation on~$\overline u$ is the  linear ordinary differential equation 
\begin{equation}
\label {form of the limi:linear}
\frac {d \overline u} {dt}  + {\lambda_\f } \overline u =0
\end{equation}
and not the damped Euler equation~(\ref{dampedEuler}) as in the flat case. As already noticed, the hypothesis\refeq {Defin_Domain_0} on~$\f$ excludes the flat case. However the above theorem can be seen as a generalization of Theorem\refer  {grenier_masmoudi}  insofar as, in the (exceptional) points where~$\f'(\rho(x_\h))=0$, we recover the expression of the Ekman pumping  {coefficient~$\lambda$} of Theorem\refer  {grenier_masmoudi}.
The fact that the limit equation~\eqref{form of the limi:linear} is linear is linked to the presence of a non-trivial topography,
which imposes a strong constraint on elements of the kernel of the Coriolis operator ({namely, as already remarked above,
those elements must be of the form $G(\vf) \nabla^\perp_\h\vf$}).
The situation is similar in the case of a variable rotation vector~\cite{GSR1,GSR2} or
in presence of variable densities which oscillate around a non-constant profile  {(see e.g.~\cite{F-G-GV-N, F-L-N} for the case
of compressible flows, \cite{FG, C-F} for the case of incompressible density dependent fluids)}.

We now comment on Assumption \eqref{wellprepared initial data} on the initial data. 
This assumption {precisely} means that the initial data belongs to the kernel of the Coriolis operator.  This is a classical hypothesis of ``good preparation'', which avoids the problem of fast oscillations in time. These time oscillations make the problem much more delicate to treat even in the case without topography (see\ccite  {cdggbook}).  Moreover, Formula\refeq  {form of the limit} defining the limit~$\overline u$ shows  that, for any positive time {$t$}, the vector field~$\overline  u(t)$ vanishes on the shore. Thus the hypothesis that~$u_0/\f $ is bounded, which implies of course that~$u_0$ vanishes on the shore, can be understood as
a reinforcement of the hypothesis of good preparation.

\subsection*{Structure of the paper}
The paper is organised in the following way.
In the next section we   present the statement of three auxiliary results, namely Propositions \ref{Taylor-Proudman} to~\ref{structure_NLterm},
and, thanks to them, conclude the proof of Theorem \ref{Ekman_topo_radial_0}. Then, in Section \ref{proof TP}
we   perform the proof of Proposition~\ref{Taylor-Proudman}. Finally, the proof of Propositions \ref{prop_U_app} and~\ref{structure_NLterm} is  given in Sections \ref{s:ansatz}
to \ref{s:error-nonlin}, with some technicalities postponed to the Appendix.

\subsection*{Acknowledgements}
The authors wish to express their gratitude to P.-D. Thizy for interesting discussions on the geometric meaning of condition~\eqref{Cond_geo_fund}
and on Segre's theorem.

The work of the second author has been partially supported by the project CRISIS (ANR-20-CE40-0020-01),
operated by the French National Research Agency (ANR)

\bigskip
\bigskip


\section {Proof of Theorem\refer {Ekman_topo_radial_0}} \label{s:structure}

The purpose of this section is to reduce the proof of Theorem\refer {Ekman_topo_radial_0} to the proof of the  {three} propositions below.
The first ingredient   is the following  {compactness} result{, where we have used   the notation introduced before the statement of Theorem~\ref{Ekman_topo_radial_0}.}
\begin{prop}
\label {Taylor-Proudman}
{\sl Let~$v$ be  an element of the weak closure~$\cW$ of the  family~$(u_\e)_{0< \e\leq \e_0}$ constructed in Theorem~\ref{theoleraybd}. Then~$v$ writes under the form
$$
v(t,x) = \big(v_\h, 0 \big) (t,x_\h)= \big( {v^\theta}\big(t,\rho\big) \nabla_\h^\perp \rho ,0\big)(x_\h)\, . 
$$
Moreover, for any function~$g$ in~$L^\infty_{\rm loc}(\R^+;L^2(\Omega_\f))$ and any family~$(w_\e)_{0< \e\leq \e_0}$ converging weakly to an element of~$\cW$, the function
$$
R_\e(t)\eqdefa   \big( w_\e   |   g \nabla_\h \rho\big)_{L^2(]0,t[ \times\Omega_\f )} $$
converges uniformly to~0 on any compact interval.}
\end{prop}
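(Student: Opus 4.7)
The plan is to first extract the structure of any weak limit $v \in \cW$ from the Coriolis constraint together with the no-penetration boundary conditions and the geometric hypotheses on~$\f$ and~$\rho$; and then to upgrade the pointwise convergence of~$R_\e(t)$ to uniform convergence by a standard equicontinuity argument.

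The energy inequality~\eqref{theoleraybdeq1} bounds $u_\e$ in $L^\infty(\R^+;L^2)$ and $\sqrt{\e}\,\nabla u_\e$ in $L^2(\R^+;L^2(\Om_\f))$; in particular $\e\D u_\e$ and $\e\, u_\e\cdot\nabla u_\e$ tend to~$0$ in~$\cD'$. Multiplying~\eqref{defin1.2_eq} by~$\e$ and letting $\e\to 0$ isolates the Taylor--Proudman identity
\[
e_\zrm \wedge v \,+\, \nabla p \,=\, 0
\]
for some distributional pressure~$p$ (recovered by first testing against divergence-free fields and invoking de Rham). Coordinate-wise, this reads $\partial_z p = 0$ and $v_\h = \nabla_\h^\perp p$, so $v_\h$ is independent of~$z$ and $\divh v_\h = 0$; the three-dimensional divergence-free condition then forces $\partial_z v^\zrm = 0$, while the top no-penetration condition $v^\zrm|_{z=0} = 0$ (stable under weak convergence) gives $v^\zrm \equiv 0$. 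At the bottom $\{z = -\f(\rho(x_\h))\}$ the no-penetration condition reads $\f'(\rho)\,v_\h\cdot\nabla_\h\rho + v^\zrm = 0$; with $v^\zrm = 0$ and the hypothesis that $\{\f'(\rho) = 0\}$ is negligible in~$\cO$, this yields $v_\h\cdot\nabla_\h\rho = 0$ a.e. Since $|\nabla_\h\rho| = 1$, the pair $(\nabla_\h\rho, \nabla_\h^\perp\rho)$ is an orthonormal frame of~$\R^2$, so $v_\h = v^\theta \nabla_\h^\perp\rho$ with $v^\theta \eqdefa v_\h\cdot\nabla_\h^\perp\rho$. Combining this with $v_\h = \nabla_\h^\perp p$ shows that $\nabla_\h p$ is everywhere collinear to $\nabla_\h\rho$, hence $p$ is constant on the level sets of~$\rho$: writing $p = P(\rho)$ we conclude $v^\theta = P'(\rho)$, a function of~$\rho$ alone.

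For the second part, let $w \in \cW$ be the weak limit of $(w_\e)$. Weak convergence gives, pointwise in~$t$,
\[
R_\e(t) \,\longrightarrow\, \int_0^t\!\!\int_{\Om_\f} (w_\h\cdot\nabla_\h\rho)\, g\, dx\, dt' \,=\, 0,
\]
where the vanishing uses the orthogonality $\nabla_\h^\perp\rho \perp \nabla_\h\rho$ applied to the structure $w_\h = w^\theta \nabla_\h^\perp\rho$ established above. For uniform convergence on a compact interval~$[0,T]$, Cauchy--Schwarz gives
\[
|R_\e(t) - R_\e(s)| \,\leq\, \|w_\e\|_{L^\infty(\R^+;L^2)} \int_s^t \|g(t')\|_{L^2(\Om_\f)}\, dt',
\]
with the first factor uniformly bounded by~\eqref{theoleraybdeq1} and the right-hand side tending to zero with $|t-s|$ since $g \in L^\infty_{\rm loc}(\R^+;L^2(\Om_\f))$. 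Equicontinuity plus pointwise convergence to zero then yield uniform convergence on~$[0,T]$ via Ascoli--Arzelà.

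The main obstacle I anticipate is the first step, namely the rigorous passage to the weak limit in the momentum equation and the reconstruction of the pressure~$p$; once the Taylor--Proudman identity is in place, the remaining geometric reasoning (based on $|\nabla_\h\rho| = 1$ and the negligibility of $\{\f'(\rho) = 0\}$) is essentially algebraic, and the uniform-in-time upgrade is a soft Ascoli argument.
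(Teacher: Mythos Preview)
Your proof is correct and follows essentially the same route as the paper: extract the Taylor--Proudman constraint by multiplying the weak formulation by~$\e$ and passing to the limit, read off the geometric structure from the boundary conditions and the negligibility of $\{\f'(\rho)=0\}$, then upgrade pointwise convergence of~$R_\e$ to uniform convergence via equicontinuity and Ascoli--Arzel\`a.

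The one stylistic difference worth noting is how you derive that $v^\theta$ is a function of~$\rho$ alone. You reconstruct the pressure via de~Rham, observe that $\nabla_\h p$ is collinear to $\nabla_\h\rho$, and conclude $p = P(\rho)$, whence $v^\theta = P'(\rho)$. The paper avoids the pressure entirely: it tests directly against divergence-free vector fields of the form $(-\partial_2\psi,\partial_1\psi,0)$, $(-\partial_\zrm\psi,0,\partial_1\psi)$, $(0,-\partial_\zrm\psi,\partial_2\psi)$ to read off $\divh v_\h = \partial_\zrm v^1 = \partial_\zrm v^2 = 0$, and then uses the divergence-free condition $\divh(v^\theta\nabla_\h^\perp\rho) = \nabla_\h v^\theta\cdot\nabla_\h^\perp\rho = 0$ to conclude that $v^\theta$ is constant on the level sets of~$\rho$. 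Your route is slightly more conceptual (it makes the stream-function structure explicit), while the paper's is more hands-on and sidesteps any regularity question about~$p$; both are equally short and neither requires anything the other does not.
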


The first part of this proposition is the well known Taylor-Proudman theorem in the presence of topography. Let us notice that the topography induces an additional constraint compared with   the flat case, where the elements of the weak closure~$\cW$ are under the form
$$
\bigl (u_\h(t,x_\h),0\bigr) \with \dive_\h u_\h =0\, .
$$
The second part of the proposition claims that the weak convergence of a family~$(w_\e)_{0< \e\leq \e_0}$ to some element of~$\cW$ can be translated into a form of uniform-in-time convergence. As we shall see below, this plays an important role in the final proof of Theorem\refer {Ekman_topo_radial_0}.

\medbreak

The second ingredient is the main, and most technical, argument leading to the proof of Theorem\refer {Ekman_topo_radial_0}. It consists in the construction of a family of  approximate solutions of the linear Stokes-Coriolis system, namely the system
\[
\left\{
\begin{array}{c} 
\ds \partial_t u_\e - \e\b\D u_\e  +\frac 1 \e e_{\zrm} \wedge u_\e + =-\frac 1 \e \nabla p_\e\,,\\
\ds  {u_\e}_{|t=0} = u_0\,,\\
\ds \dive u_\e =0\andf {u_\e}_{|\partial\Om_\vf }=0\,  .
\end{array}
\right.
\leqno(SC_\e)
\]
The properties of those approximate solutions is described in the following proposition.

\begin {prop}
\label {prop_U_app}
{\sl
Under the assumptions of Theorem~\ref{Ekman_topo_radial_0}, there is a  family~$(U_{\app,\e})_{0< \e\leq \e_0} $ of vector fields in~$C^1(\R^+;\cV_\sigma)$, which solve {the linear system $(SC_\e)$ in an approximate way,} in the   sense that they satisfy, for any $0<\e\leq\e_0$, the equations 
\[ 
  \left\{ \begin {array} {c} \ds \partial_t U_{\app,\e}  - \e\b\D U_{\app,\e}   +\frac 1 \e e_{\zrm} \wedge U_{\app,\e}   =
  -\frac 1 \e \nabla p_\e  +
    E_\e \\\\
  \ds  \lim_{\e\rightarrow0} \bigl\| {U_{\app,\e}} _{ |t=0}  -u_0\|_{L^2(\Om_\f)}=0\,,
\end {array}
\right.
\leqno(SC_{\app,\e})
\]
   where~$ E_\e$ converges to zero in~$L^1(\R^+; L^2(\Omega_\f ))$ and $(p_\e)_{0< \e\leq \e_0} $ is a    family of functions in the space~$L^2_{\rm loc}(\R^+;L^2(\Omega_\f ))$. 
The family~$U_{\app,\e}  $ is a good approximation of~$\overline u  $ as defined in Theorem~\ref{Ekman_topo_radial_0},
 in the sense~that
\beq
\label {prop_U_app_eq2}
\lim_{\e\to 0} \bigl\| U_{\app,\e}   - \overline u\bigr\|_{L^\infty(\R^+;L^2(\Om_\f))} = 0 \, .
\eeq
The approximate solution~$U_{\app,\e}  $ is made of a principal boundary layer  term~$U_{{\rm BL},\e}$, such that 
 \begin{equation}
 \label{bdd gradients}
 \big(\nabla(U_{\app,\e}  - U_{{\rm BL},\e})\bigr)_{0< \e\leq \e_0}  \quad \mbox{ is bounded in}\quad L^1(\R^+;L^\infty(\Om_\f)) \, , 
 \end{equation}
 and  the family~$(U_{{\rm BL},\e})_{0< \e\leq \e_0} $ satisfies 
  \begin{equation}\label{small BL}
\bigl\|  \sqrt {d_\f} U_{{\rm BL},\e} \bigr\|_{L^\infty(\R^+; L^\infty_\h L^2_\v(\Om_\f))}  
\leq C\e\|u_0\|_{ L^\infty(\Omega_\f)}+C\e^2\|u_0\|_{ {W^{1,\infty}}(\Omega_\f)}\,,
 \end{equation}
 where~$d_\f$ is defined by $d_\f(x_\h, z) \eqdefa \min\{ -z, \f(\rho(x_\h))+z\}$
and  the space~$L^\infty_\h L^2_\v(\Om_\f)$ is defined by the norm
$$
\|F\|_{L^\infty_\h L^2_\v(\Om_\f)} \eqdefa \sup_{x_\h\in \cO} \biggl(\int_{-\f(\rho(x_\h))} ^0 {\left|F(x_\h,z)\right|^2} dz\biggr)^{\frac 12}\, .
$$
}
\end {prop}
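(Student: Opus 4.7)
\emph{Sketch of proof.} The plan is to build $U_{\app,\e}$ via the two--scale ansatz
\[
U_{\app,\e} = \overline u + U^{s}_{\e} + U^{b}_\e + \e\, W_\e,
\]
where $U^{s}_\e$ and $U^{b}_\e$ are Ekman boundary--layer correctors supported respectively in $O(\sqrt{\e\b})$--neigh\-bourhoods of the surface $\{z=0\}$ and of the tilted bottom $\{z=-\f(\rho(x_\h))\}$, and $\e W_\e$ is a small divergence corrector. The correctors are designed to cancel the trace of $\overline u$ on $\partial \Om_\f$, and the evolution of $\overline u^\theta$ will be identified by imposing compatibility between the vertical Ekman pumping produced by the two horizontal layers and the vanishing normal velocity of the interior field; this compatibility will single out the coefficient $\lambda_\f$.

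\textbf{Surface and bottom profiles.} At the flat surface the classical Ekman spiral produces, with a cut-off $\chi$ and $\delta_s = \sqrt{\e\b}$,
\[
U^{s}_\e(t,x_\h,z) = -\chi(z)\,e^{z/\delta_s}\bigl[\cos(z/\delta_s)\,\overline u_\h + \sin(z/\delta_s)\,\overline u_\h^\perp\bigr].
\]
At the bottom, the key computation is to straighten the boundary via $\zeta = z+\f(\rho(x_\h))$: after this change of variables the Ekman operator $-\e\b \partial_z^2 + \e^{-1}\, e_\zrm\wedge\cdot$ becomes, to leading order, $-\e\b(1+|\nabla_\h\f|^2)\,\partial_\zeta^2 + \e^{-1}\, e_\zrm\wedge\cdot$, so that the bottom boundary--layer scale is modified to $\delta_b = \sqrt{\e\b}\,\sqrt[4]{1+\f'(\rho)^2}$. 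The resulting profile
\[
U^{b}_\e(t,x_\h,z) = -\chi_b(\zeta)\,e^{-\zeta/\delta_b}\bigl[\cos(\zeta/\delta_b)\,\overline u_\h + \sin(\zeta/\delta_b)\,\overline u_\h^\perp\bigr] + (\text{l.o.t.})
\]
cancels the no--slip defect at $\zeta=0$. The vertical corrector $\e W_\e$ is then defined by solving $\partial_z(\e W_\e^\zrm) = -\divh U^{s,\h}_\e - \divh U^{b,\h}_\e$ with appropriate boundary conditions, restoring the divergence--free character of the whole ansatz.

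\textbf{Derivation of $\lambda_\f$.} Integrating $\divh U^{s,\h}_\e$ and $\divh U^{b,\h}_\e$ across the respective layers yields vertical velocities at their inner edges of sizes $\sqrt{\e\b/2}\,\curl_\h \overline u_\h$ and $\sqrt{\e\b/2}\,\sqrt[4]{1+\f'^2(\rho)}\,\curl_\h \overline u_\h$ (up to signs). Matching these velocities with $\overline u^\zrm=0$, once projected onto the one--dimensional kernel identified by Proposition~\ref{Taylor-Proudman} and expressed in the basis $(\nabla_\h\rho,\nabla_\h^\perp\rho,e_\zrm)$ using $\overline u_\h = \overline u^\theta(t,\rho)\,\nabla_\h^\perp \rho$ together with $|\nabla_\h\rho|=1$, produces the linear ODE $\partial_t \overline u^\theta + \lambda_\f \,\overline u^\theta = 0$ with
\[
\lambda_\f(r) = \frac{\sqrt{2\b}}{\f(r)}\cdot\frac{1+\sqrt[4]{1+\f'(r)^2}}{2}\,\cdotp
\]
This choice of $\overline u^\theta$ then gives \eqref{prop_U_app_eq2}, since the BL correctors vanish in $L^\infty_t L^2_x$ at rate $O(\e^{1/4})$.

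\textbf{Error estimates and main obstacle.} The remainder $E_\e$ collects: (i) $\partial_t$ applied to the BL profiles, which is $O(\e^{1/4})$ in $L^\infty_t L^2_x$ and integrable in time thanks to the exponential decay encoded by $\lambda_\f$; (ii) tangential derivatives of the profiles and of the straightening map; (iii) the corrector $\e W_\e$ and the sub-principal Laplacian terms generated by the bottom change of variables; (iv) an initial--data mismatch, removed by a short time cut-off. Bound \eqref{bdd gradients} follows from the fact that $U_{\app,\e} - U_{\rm BL,\e}$ is essentially the smooth interior profile $\overline u$ (which has exponentially decaying time derivatives), and \eqref{small BL} is a direct pointwise estimate on the Ekman profiles, the weight $\sqrt{d_\f}$ being precisely tailored to the exponential decay. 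The hard part is the analysis at the tilted bottom: one must simultaneously extract the correct anisotropic Ekman scale, preserve the divergence--free character and the lateral no--slip condition near $\{\rho=\rho_0\}$, and handle the shore degeneracy as $\f\to 0$. The assumption $u_0/\f \in L^\infty$ and the negligibility of $\{\f'(\rho)=0\}$ are exactly what is needed to keep $\lambda_\f\,\overline u^\theta$ bounded up to the shore and to prevent any pathology in the modified Ekman scale $\delta_b$.
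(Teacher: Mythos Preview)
Your overall architecture (interior profile plus Ekman correctors plus a divergence fix, with $\lambda_\f$ emerging from the pumping compatibility) is the same as the paper's, but two of your key computations are wrong or missing, and both matter.

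\medskip
\textbf{The bottom layer.} Your claim that straightening turns the Ekman operator into $-\e\b(1+\f'^2)\,\partial_\zeta^2 + \e^{-1}e_\zrm\wedge\cdot$, giving scale $\delta_b=\sqrt{\e\b}\,(1+\f'^2)^{1/4}$, is incorrect. At the curved bottom the order-$1$ pressure does \emph{not} vanish: the third momentum equation and the divergence constraint force $U_0^{\zrm,\bot}=-\f' U_0^{\rho,\bot}$ and $P_1^{\bot}=-\tfrac{\sqrt{2\b}}{2}\tfrac{1+\f'^2}{\d}\f'\,\partial_\zeta U_0^{\rho,\bot}$. Feeding this back into the $\rho$-equation makes the reduced $2\times2$ system \emph{asymmetric}: the coefficient in front of $\partial_\zeta^2 U_0^{\rho,\bot}$ is $(1+\f'^2)^2/(2\d^2)$ while that in front of $\partial_\zeta^2 U_0^{\theta,\bot}$ is only $(1+\f'^2)/(2\d^2)$. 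The fourth-order ODE then forces $\d=(1+\f'^2)^{3/4}$, not $(1+\f'^2)^{1/4}$. The factor $(1+\f'^2)^{1/4}$ in $\lambda_\f$ appears only later, as $\d^{1/3}$, when one integrates the divergence of the bottom layer to compute the pumping. So although you wrote down the right $\lambda_\f$, your derivation as stated would not produce it.

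\medskip
\textbf{The shore.} You acknowledge the degeneracy $\f\to0$ but give no mechanism. Near the shore the two layers, each of thickness $O(\e)$, overlap, so your disjoint-corrector picture breaks down; simultaneously $\lambda_\f\sim 1/\f$ blows up, so horizontal derivatives of $e^{-t\lambda_\f}u_0^\theta$ are uncontrolled. The paper cuts the whole construction off where $\f(\rho)\ge 2\e^{1-a}$ for some $a\in(2/3,1)$; this forces extra boundary-layer correctors at order $\e^a$ (to repair the divergence) and is the place where the hypothesis $u_0^\theta/\f\in L^\infty$ is actually used, precisely to get the uniform bound \eqref{bdd gradients} on $\nabla\wt U_{0,\rmint,\e}$. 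Your statement that \eqref{bdd gradients} ``follows from the fact that $U_{\app,\e}-U_{{\rm BL},\e}$ is essentially the smooth interior profile $\overline u$'' hides this: without the cut-off and the $u_0/\f$ assumption, $\nabla\overline u$ is \emph{not} uniformly bounded near the shore.
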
 

The third proposition describes the structure of the non linear quantity~$U_{\app,\e} \cdot \nabla U_{\app,\e}$. It is not a gradient, but it is close to the term~$g\nabla_\h\rho$ that appears in the statement of Proposition\refer   {Taylor-Proudman}. 
\begin {prop}
\label {structure_NLterm}
{\sl Let~$(U_{\app,\e})_{0< \e\leq \e_0} $ be the family given by Proposition\refer  {prop_U_app}. Then we have, with notation~\eqref{form of the limit},
$$
\lim_{\e\rightarrow 0} \bigl\| U_{\app,\e} \cdot \nabla U_{\app,\e} +  \bigl(\overline u^\theta(t,\rho)\bigr)^2\D_\h\rho \nabla_\h\rho \bigr\|_{L^1(\R^+,L^2(\Om_\f))} =0\,.
$$}
\end {prop}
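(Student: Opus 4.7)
The plan is to identify the target expression $-(\overline u^\theta(t,\rho))^2 \Delta_\h\rho\, \nabla_\h\rho$ with $\overline u\cdot\nabla\overline u$, then show that the quadratic term $U_{\app,\e}\cdot\nabla U_{\app,\e}$ converges to this limit in $L^1(\R^+;L^2(\Om_\f))$ after splitting $U_{\app,\e}$ into its interior and boundary-layer parts. The starting point is the algebraic computation: since $\overline u = \bigl(\overline u^\theta(t,\rho)\nabla_\h^\perp\rho,\, 0\bigr)$ and $\nabla_\h\bigl(\overline u^\theta(t,\rho)\bigr) = (\partial_r\overline u^\theta)(t,\rho)\nabla_\h\rho$ is orthogonal to $\nabla_\h^\perp\rho$, one has $\overline u\cdot\nabla\overline u = (\overline u^\theta)^2\, (\nabla_\h^\perp\rho \cdot \nabla_\h)\nabla_\h^\perp\rho$. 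Differentiating $|\nabla_\h\rho|^2 = 1$ componentwise yields $\partial_1\rho\,\partial_{11}\rho + \partial_2\rho\,\partial_{12}\rho = 0$ and $\partial_1\rho\,\partial_{12}\rho + \partial_2\rho\,\partial_{22}\rho = 0$, from which a direct calculation produces $(\nabla_\h^\perp\rho\cdot\nabla_\h)\nabla_\h^\perp\rho = -\Delta_\h\rho\,\nabla_\h\rho$. Hence it suffices to prove $U_{\app,\e}\cdot\nabla U_{\app,\e} \to \overline u\cdot\nabla\overline u$ in $L^1(\R^+;L^2(\Om_\f))$.

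Setting $V_\e := U_{\app,\e} - U_{{\rm BL},\e}$ and expanding,
\[
U_{\app,\e}\cdot\nabla U_{\app,\e} = V_\e\cdot\nabla V_\e + V_\e\cdot\nabla U_{{\rm BL},\e} + U_{{\rm BL},\e}\cdot\nabla V_\e + U_{{\rm BL},\e}\cdot\nabla U_{{\rm BL},\e}.
\]
The boundary-layer estimate \eqref{small BL}, combined with the fact that $U_{{\rm BL},\e}$ is concentrated in a layer of width $O(\sqrt{\e})$ where $d_\f = O(\sqrt{\e})$, yields $U_{{\rm BL},\e}\to 0$ in $L^\infty_t L^2(\Om_\f)$; together with \eqref{prop_U_app_eq2} this gives $V_\e \to \overline u$ in $L^\infty_t L^2(\Om_\f)$. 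Writing
\[
V_\e\cdot\nabla V_\e - \overline u\cdot\nabla\overline u = (V_\e-\overline u)\cdot\nabla V_\e + \overline u\cdot\nabla(V_\e-\overline u),
\]
the first piece tends to zero in $L^1_t L^2_x$ by pairing this $L^\infty_t L^2_x$-convergence with the bound~\eqref{bdd gradients}. For the second piece, one invokes the explicit construction of $V_\e$ in Section~\ref{s:ansatz}, from which $V_\e$ differs from $\overline u$ only by a smooth $O(\e)$ interior corrector whose gradient is small in $L^1_t L^\infty_x$.

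The boundary-layer cross terms $V_\e\cdot\nabla U_{{\rm BL},\e}$ and $U_{{\rm BL},\e}\cdot\nabla V_\e$ are estimated by combining the $L^\infty$-boundedness of $V_\e$ and $\nabla V_\e$ (inherited from the hypotheses on $u_0$ via the ansatz) with the smallness of $U_{{\rm BL},\e}$ and of $\nabla U_{{\rm BL},\e}$ in the appropriate boundary-layer norms: although $\nabla U_{{\rm BL},\e}$ carries an $\e^{-1/2}$ loss from the vertical scaling, this loss is absorbed by the $\e$-amplitude given by~\eqref{small BL} and by integration over a layer of thickness $\sqrt{\e}$. The quadratic piece $U_{{\rm BL},\e}\cdot\nabla U_{{\rm BL},\e}$ is the most delicate, but its horizontal-velocity-times-vertical-gradient part is controlled using that $U_{{\rm BL},\e}^\zrm$ gains an extra factor of $\sqrt{\e}$ thanks to the divergence-free condition imposed on $U_{\app,\e}$.

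The main obstacle will be the term $\overline u\cdot\nabla(V_\e - \overline u)$: Proposition~\ref{prop_U_app} as stated only provides boundedness of $\nabla V_\e$, so the desired convergence of $\nabla(V_\e - \overline u)$ to zero must be extracted from the explicit form of the interior part of the ansatz, which is exactly where the construction in Section~\ref{s:ansatz} becomes indispensable.
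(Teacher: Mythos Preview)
Your opening algebraic identification of $\overline u\cdot\nabla\overline u$ with $-(\overline u^\theta)^2\Delta_\h\rho\,\nabla_\h\rho$ is correct and matches the paper. However, the rest of the argument has a genuine gap: you try to control all cross terms and the self-interaction $U_{{\rm BL},\e}\cdot\nabla U_{{\rm BL},\e}$ by size estimates alone, and this cannot work. The leading boundary layer $\wt U_{0,{\rm BL}}$ has amplitude $O(1)$ in $L^\infty$, not $O(\e)$; estimate~\eqref{small BL} concerns $\sqrt{d_\f}\,U_{{\rm BL},\e}$ and does not imply any smallness for $U_{{\rm BL},\e}$ itself. Consequently $\|\nabla U_{{\rm BL},\e}\|_{L^2(\Om_\f)}$ is genuinely of order $\e^{-1/2}$ (the layer-thickness gain is already used up), so your bound for $V_\e\cdot\nabla U_{{\rm BL},\e}$ via $\|V_\e\|_{L^\infty}\|\nabla U_{{\rm BL},\e}\|_{L^2}$ diverges. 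Likewise, your claim that ``$U_{{\rm BL},\e}^\zrm$ gains an extra factor of $\sqrt\e$'' is false at the curved bottom: there $\wt U_0^{\zrm,\bot}=-\f'\,\wt U_0^{\rho,\bot}$ is $O(1)$, and the naive estimate of $\wt U_0^{\zrm,\bot}\,\partial_z\wt U_{0,{\rm BL}}^{\bot}$ gives $O(\e^{-1})$ pointwise.

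What is actually needed---and what the paper exploits---is a structural cancellation rather than a size estimate. For the term $\wt U_{0,\rmint,\e}\cdot\nabla\wt U_{0,{\rm BL}}$, the point is that $\wt U_{0,\rmint,\e}=u^\theta_\e\nabla_\h^\perp\rho$ has no vertical component and is tangent to the level sets of $\rho$; since the boundary-layer profiles depend on $x_\h$ only through $\rho$, the transport never differentiates the fast variable, and only the basis vectors $\nabla_\h\rho,\nabla_\h^\perp\rho$ get hit (formula~\eqref{2.1nl_demoeq2} in the paper). For the self-interaction $\wt U_{0,{\rm BL}}^\bot\cdot\nabla\wt U_{0,{\rm BL}}^\bot$, the key identity is that the specific form $\wt U_0^{\zrm,\bot}=-\f'\,\wt U_0^{\rho,\bot}$ makes the singular contributions cancel exactly: the operator $\wt U_0^{\rho,\bot}\bigl(\nabla_\h\rho\cdot\nabla_\h-\f'\partial_z\bigr)$ annihilates the fast variable $(z+\f)/(\d\sqrt E)$ at leading order, because $(\nabla_\h\rho\cdot\nabla_\h-\f'\partial_z)(z+\f(\rho))=0$. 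This is the content of Lemma~\ref{l:non-linear}. Without these two cancellations your decomposition cannot close; once you incorporate them, you are essentially forced into the finer splitting~\eqref{structure_NLterm_demoeq1} that isolates $\wt U_{0,\rmint,\e}$ and $\wt U_{0,{\rm BL}}$ separately rather than lumping all interior (resp.\ boundary-layer) terms together.
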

Before proving Theorem\refer  {Ekman_topo_radial_0} using the above three  {statements},  let us make some comments about this proposition. For every smooth vector field of the form~$G(\rho)\nabla_\h^\perp \rho$, we have
$$
G(\rho)\nabla_\h^\perp \rho \cdot \nabla_\h \big(G(\rho)\nabla _\h^\perp \rho\big)= 
G^2(\rho) \nabla_\h^\perp \rho \cdot \nabla\nabla _\h^\perp \rho\,.
$$
At this point, it is interesting to compute, for any function~$\vtheta$, the quantity~$ \nabla_\h^\perp \vtheta \cdot \nabla\nabla _\h^\perp \vtheta$. In cartesian coordinates, we have
\ben
\nonumber \nabla_\h^\perp \vtheta \cdot \nabla\nabla _\h^\perp \vtheta & = & \bigl(-\partial_2\vtheta\partial_1+\partial_1\vtheta\partial_2\bigr)\begin {pmatrix} -\partial_2\vtheta \\ \partial_1\vtheta \end {pmatrix}\\
\nonumber & = & \begin {pmatrix} \partial_2\vtheta\partial_1\partial_2\vtheta- \partial_1\vtheta\partial^2_2\vtheta \\ 
- \partial_2\vtheta\partial_1^2\vtheta+ \partial_1\vtheta\partial_1\partial^2_2\vtheta \end {pmatrix}\\
\label {Euler_Station_fund}
& =& \frac 1 2 \nabla_\h\bigl( |\nabla_\h\vtheta|^2\bigr)-\D_\h \vtheta \nabla_\h \vtheta\, .
\een
{Then} a vector field of the type~$\nabla_\h^\perp\vtheta$ is a stationary solution  of the 2D Euler equation  as soon as a function~$g$ exists such that~$\D_\h \vtheta =g(\vtheta)$.

In the particular case when~$\vtheta=\rho$, because the norm of the gradient of~$\rho$ is~$1$, we get
\beq
\label {deriv_nabla_rho} 
\nabla_\h^\perp \rho\cdot \nabla_\h (\nabla_\h^\perp \rho ) = -\D_\h\rho \nabla_\h\rho\, .
\eeq
To ensure that~$\D_\h\rho \nabla_\h\rho $ is a gradient, the assumption is that~$\D_\h\rho=G(\rho)$. Segre's theorem (see\ccite  {Brustad})  claims in particular  that, if a smooth function~$\vtheta$ on a bi-dimensional  domain satisfies 
$$
\D_\h \vtheta =F(\vtheta)\andf |\nabla_\h\vtheta|^2 = G(\vtheta) \,,
$$
then~$\vtheta$ is radial or linear. Here our setting is more general, thus~$\D_\h\rho \nabla_\h\rho $ is not a gradient.

As we shall see below, the form of~$U_{\app,\e} \cdot \nabla U_{\app,\e}$ plays a crucial role in the proof of Theorem\refer  {Ekman_topo_radial_0},
{which we are now going to present.}

\begin {proof}[Proof of Theorem{\rm\refer  {Ekman_topo_radial_0}} admitting Propositions{\rm\refer  {Taylor-Proudman}--\ref  {structure_NLterm}}]
Our approach is inspired by the method used to prove weak-strong stability results. The idea is to use a ``regular" solution as a test function in the definition of a turbulent solution. Here the role of the regular solution is played by the approximate solution~$U_{\app,\e}  $. This method is classical (see for instance\ccite {cdggbook} or\ccite {grenier_masmoudi}) and, to the best of our knowledge, was used for the first time in the work~\cite{vonwahl} for the proof of weak-strong uniqueness to the three-dimensional Navier-Stokes equations.

We want to prove that 
$$\d_\e   \eqdefa u_\e -U_{\app,\e}
$$
converges to zero in the energy space, where~$U_{\app,\e}  $ is the approximate  solution  defined in Proposition\refer {prop_U_app}. Usually, to prove uniqueness for evolution problems of the type
$$
Lu=Q(u,u)
$$
where~$L$ is a linear operator and~$Q$ is quadratic, we write the equation on~$w=u-v$ by writing 
$$
Lw = Q(w,u) +Q(v,w)\, .
$$
In that case the two solutions~$u$ and~$v$ play the same role. As we shall see, the weak-strong uniqueness method {and the structure of
the non-linear term $Q$} make the role of the two solutions non symmetric. Let us  define
$$
\D_\e(t) \eqdefa \frac 12 \|\d_\e(t)\|_{L^2(\Om_\f)}^2 +\b \e\int_0^t \|\nabla \d_\e(t')\|_{L^2(\Om_\f)}^2 dt'\, .
$$
 Expanding the square, let us write that
$$
\begin {aligned}
\D_\e(t)  & = \frac 12 \|u_\e(t)\|_{L^2(\Om_\f)}^2 +\b \e\int_0^t \|\nabla u_\e(t')\|_{L^2(\Om_\f)}^2 dt'\\
& \qquad{}
+\frac 12 \|U_{\app,\e}  (t)\|_{L^2(\Om_\f)}^2 +\b \e\int_0^t \|\nabla U_{\app,\e}  (t')\|_{L^2(\Om_\f)}^2 dt'\\
 &\qquad\qquad{} -  \bigl(u_\e(t)\big | U_{\app,\e} (t)\bigr)_{L^2(\Om_\f)}
-2\b \e  \int_0^t \bigl(\nabla u_\e(t')\big | \nabla U_{\app,\e} (t')\bigr)_{L^2(\Om_\f)} dt'.
\end {aligned}
$$
Thanks to the energy estimate\refeq  {theoleraybdeq1},
there holds
$$
\frac 12 \|u_\e(t)\|_{L^2(\Om_\f)}^2 +\b \e\int_0^t \|\nabla u_\e(t')\|_{L^2(\Om_\f)}^2 dt' \leq  \frac 12 \|u_0\|_{L^2(\Om_\f)}^2\, .
$$
Moreover $ U_{\app,\e} $ satisfies the linear system~$(SC_{\app,\e} )$. This implies that 
$$
\longformule
{
\frac 12 \|U_{\app,\e}  (t)\|_{L^2(\Om_\f)}^2 +\b \e\int_0^t \|\nabla U_{\app,\e}  (t')\|_{L^2(\Om_\f)}^2 dt'
}
{ {}
\leq \frac 12 \|u_{0,\e}\|_{L^2(\Om_\f)}^2+\int_0^t\int_{\Om_\f}  E_\e \cdot U_{\app,\e} (t',x) \,  dt' dx\, .
}
$$
Since, by definition,~$ E_\e$ converges to zero in~$L^1(\R^+; L^2(\Omega_\f ))$ and, by construction,~$U_{\app,\e}$ is uniformly bounded in the space~$L^\infty(\R^+;L^2(\Om_\f))$, there holds
$$
\lim_{\e \to 0}\sup_{t \geq 0} \Big| \int_0^t\int_{\Om_\f}  E_\e \cdot U_{\app,\e} (t',x) \,  dt' dx  \Big|= 0 \,.
$$

In all that follows, $ r_\e$ denotes a generic function which satisfies
\begin{equation} \label{eq:poubelle}
\forall T>0 \, , \quad \lim_{\e \to 0}\sup_{t \in [0,T]} r_\e(t) = 0 \,.
\end{equation}
Collecting all the previous information, we  find that
\beq
\label {proof_converge_theo_nonlinear_eq3}
\begin {aligned}
\D_\e(t) & \leq  \frac 12 \|u_0\|_{L^2(\Om_\f)}^2 +\frac 12 \|u_{0,\e}\|_{L^2(\Om_\f)}^2  +  r_\e(t)\\
 &\qquad\quad{} -  \bigl(u_\e(t)\big | U_{\app,\e} (t)\bigr)_{L^2(\Om_\f)}
-2\b \e  \int_0^t \bigl(\nabla u_\e(t')\big | \nabla U_{\app,\e} (t')\bigr)_{L^2(\Om_\f)} dt'.
\end  {aligned}
\eeq

Now, we use the approximate solution~$U_{\app,\e}$ as a test function in formula\refeq  {definweaksolbd}. This gives
\begin{equation}
\label{WS_demo_eq1}
\begin{aligned}  &
(u_0|U_{\app,\e}(0))_{L^2(\Om_\f)} = ( u_\e(t) | U_{\app,\e} (t))_{L^2(\Om_\f)} -\int_0^t \int _{\Om_\f}    \bigl(u_\e \otimes u_\e : \nabla U_{\app,\e}\bigr)(t',x)\, dxdt' \\
& {}\qquad\qquad\quad 
 + 
\int_0^t\! \int_{\Omega_\f} \biggl( \b\e\nabla u_\e : \nabla U_{\app,\e}
- u_\e \cdot \Bigl( \partial_t U_{\app,\e} +  \frac 1 \e e_{\zrm} \wedge U_{\app,\e}
\Bigr)\biggr)(t',x)\, dx dt'.
\end{aligned}
\end{equation}
As we have
$$
\int_{\Omega_\f} \bigl( \b\e\nabla u_\e : \nabla U_{\app,\e}\bigr)(t',x) dx = -\b\e \langle \D U_{\app,\e} (t'), u_\e(t')\rangle _{H^{-1}\times H^1_0}\, ,
$$
we infer from the approximate Stokes-Coriolis system~$(SC_{\app,\e} ) $ that 
\[
\begin {aligned}
\cL_\e(t) &\eqdefa  \int_0^t \int_{\Omega_\f} \biggl( \b\e\nabla u_\e : \nabla U_{\app,\e}
- u_\e \cdot \Bigl( \partial_t U_{\app,\e}+  \frac 1 \e e_{\zrm} \wedge U_{\app,\e}
\Bigr)\biggr)(t',x)\, dx dt'\\
& =  - \int_0^t   \Big\langle    \partial_tU_{\app,\e}+\b\e \D U_{\app,\e}+  \frac 1 \e e_{\zrm} \wedge U_{\app,\e}, u_\e(t')\Big\rangle_{{H^{-1}\times H^1_0}} dt'\\
& =  2 \int_0^t \int_{\Omega_\f} \bigl( \b\e\nabla u_\e : \nabla U_{\app,\e}\bigr)(t',x) dt'dx
+\int_0^t (u_\e(t')|E_\e(t'))_{L^2(\Om_\f)}dt'.
\end{aligned}
\]
As~$\ds \lim_{\e\rightarrow0} \|E_\e\|_{L^1(\R^+;L^2(\Om_\f)}=0$, we infer that 
$$
\cL_\e(t) = 2  \int_0^t \int_{\Omega_\f} \bigl( \b\e\nabla u_\e : \nabla U_{\app,\e}\bigr)(t',x)\, dx dt'
+r_\e(t) \, .
$$
Inserting this into\refeq {WS_demo_eq1} gives
$$
\longformule{
(u_0|U_{\app,\e}(0))_{L^2(\Om_\f)} = ( u_\e(t) | U_{\app,\e} (t))_{L^2(\Om_\f)} + 2  \int_0^t \int_{\Omega_\f} \bigl( \b\e\nabla u_\e : \nabla U_{\app,\e}\bigr)(t',x)\, dx dt'  }
{  {}
-\int_0^t \int _{\Om_\f}    \bigl(u_\e \otimes u_\e : \nabla U_{\app,\e}\bigr)(t',x)\, dxdt' 
+r_\e(t) \, .
}
$$
Plugging the above relation into\refeq  {proof_converge_theo_nonlinear_eq3} ensures that
\beno
\D_\e(t) & \leq  &  \frac 12 \|u_0\|_{L^2(\Om_\f)}^2 +\frac 12 \|u_{0,\e}\|_{L^2(\Om_\f)}^2 -(u_0|U_{\app,\e}(0))_{L^2(\Om_\f)}
 +  r_\e(t)\\
 & &\qquad\qquad\qquad\qquad\qquad\qquad\qquad
  {} +\int_0^t \int _{\Om_\f}    \bigl(u_\e \otimes u_\e : \nabla U_{\app,\e}\bigr)(t',x)\, dxdt' 
\\
& \leq &   \frac 12 \|u_0-U_{\app,\e}(0)\|^2_{L^2(\Om_\f)}
 +  r_\e(t) +\int_0^t \int _{\Om_\f}    \bigl(u_\e \otimes u_\e : \nabla U_{\app,\e}\bigr)(t',x)\, dxdt'\,.
  \eeno
  Using Assertion~\eqref{prop_U_app_eq2}  of Proposition\refer  {prop_U_app}, we infer that 
\beq
\label {WS_demo_eq2} 
\D_\e(t)  \leq r_\e(t) +\int_0^t \int _{\Om_\f}    \bigl(u_\e \otimes u_\e : \nabla U_{\app,\e}\bigr)(t',x)\, dxdt'\,.
\eeq
Now, let us study the non-linear term of the above inequality in light of Proposition\refer  {structure_NLterm}.  Let us observe that, if~$a$ and~$b$ are
two vector fields in~$\cV_\s$, we have
$$
\begin{aligned}
\int_{\Om_\f}  a\otimes a: \nabla b(x) dx & =  (a\cdot \nabla b|a)_{L^2} \\
& =    \bigl( (a-b)\cdot \nabla b \big | a\bigr)_{L^2}   + ( b\cdot \nabla b \big | a)_{L^2}\\
& = \bigl( (a-b)\cdot \nabla b \big | (a-b)\bigr)_{L^2}   + \big( b\cdot \nabla b \big | a \big)_{L^2}\,.
\end{aligned}
$$
Applying this with~$a=u_\e$ and~$b=U_{\app,\e}$ and plugging  {the resulting expression} into\refeq {WS_demo_eq2}, we deduce that
\[
\begin {aligned}
\D_\e(t) \leq    &  \int_0^t \int_{\Om_\f} ( \d_\e \otimes \d_\e)   :   \nabla U_{\app,\e}  (t',x) \, dxdt'    \\
& \qquad\qquad\qquad {}+\int_0^t \int_{\Omega_\f} \bigl( ( U_{\app,\e}\cdot\nabla U_{\app,\e})\cdot u_\e\bigr)(t',x)dt'dx + r_\e(t)
 \,.
\end {aligned}
 \]
 Then, Proposition\refer  {structure_NLterm} implies that 
\[
\begin {aligned}
\D_\e(t) \leq    &  \int_0^t \int_{\Om_\f} ( \d_\e \otimes \d_\e)   :   \nabla U_{\app,\e}  (t',x) \, dxdt'    \\
& \qquad\qquad\qquad {}+\int_0^t \int_{\Omega_\f} \bigl( (u^\theta)^2\D_\h\rho \nabla_\h\rho \cdot u_\e\bigr)(t',x)dt'dx + r_\e(t)
 \,.
\end {aligned}
 \]
 Proposition\refer  {Taylor-Proudman} ensures that 
 \beq
\label {estim_Delta_WS}
\D_\e(t) \leq       \int_0^t \int_{\Om_\f} ( \d_\e \otimes \d_\e)   :   \nabla U_{\app,\e}  (t',x) \, dxdt'   + r_\e(t)
 \,.
\eeq

At this point, we use the decomposition of Proposition\refer {prop_U_app} and the Cauchy-Schwarz inequality:  we get, from Equation~\eqref{estim_Delta_WS}, that 
\beq
\label  {estim_Delta_WS_eq1}
\begin{aligned}
\D_\e(t) & \leq  
\int_0^t  \|\d_\e(t')\|^2_{L^2(\Om_\f)} \bigl \|\nabla ( U_{\app,\e} -U_{{\rm BL},\e} )  (t',\cdot)\bigr\|_{L^\infty(\Om_\f)} dt'   + \cQ_{{\rm BL},\e}(t)
+ r_\e(t)\,,  \\
&\with  \cQ_{{\rm BL},\e}(t) \eqdefa \int_0^t \int_{\Om_\f} ( \d_\e \otimes \d_\e)   :   \nabla U_{{\rm BL},\e}  (t',x) \, dxdt' .
\end {aligned}
\eeq
By integration by parts  {and thanks to the divergence free condition satisfied by~$\d_\e$}, we have
$$
 \cQ_{{\rm BL},\e}(t) = \int_0^t \int_{\Om_\f} ( \d_\e \cdot \nabla  \d_\e) (t',x) \cdot  U_{{\rm BL},\e}(t',x) (t',x) \, dxdt'.
$$
This term is estimated thanks to the following lemma, which is a variation of a result which is classical in the flat case (see for instance\ccite  {cdggbook}, Lemma~7.4 page~169).  We admit it for the time being.
\begin{lemme}
\label {BLEkmantoporadpSmall}
{\sl
Let~$\Om_\vf$ a domain satisfying\refeq {defin_domain_general}.  Let us consider a vector field~$ \delta$ in~$\cV_\sigma $ and~$ w$ a bounded vector field. Then
$$
 \big | \big(\delta \cdot \nabla \delta  | w
\big)_{L^2} \big | \leq   \|\nabla \d\|^2_{L^2} \|\sqrt {d_\vf} w\|_{L^\infty_\h L^2_\v(\Om_\vf)}\, ,
$$
{where the notations $d_\vf$ and $L^\infty_\h L^2_\v(\Om_\vf)$ are defined as $d_\f$ and $L^\infty_\h L^2_\v(\Om_\f)$ in Proposition \ref{prop_U_app},
but using the function $\vf$ instead of $\f(\rho)$.}
}
\end{lemme}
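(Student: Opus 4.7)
The plan is to reduce the trilinear estimate to a one-dimensional Hardy-type inequality in the vertical variable, exploiting that $\delta$ vanishes at both $z=0$ and $z=-\vf(x_\h)$. First, I apply Cauchy--Schwarz in $L^2(\Om_\vf)$ after grouping the three factors as $(|\delta|\,|w|)\cdot|\nabla\delta|$:
\[
\bigl|(\delta\cdot\nabla\delta\,|\,w)_{L^2}\bigr|\leq \|\delta\, w\|_{L^2(\Om_\vf)}\,\|\nabla\delta\|_{L^2(\Om_\vf)}\,.
\]
The whole problem therefore reduces to controlling $\|\delta\,w\|_{L^2(\Om_\vf)}$ by $\|\nabla\delta\|_{L^2}$ times the anisotropic weighted norm $\|\sqrt{d_\vf}\,w\|_{L^\infty_\h L^2_\v(\Om_\vf)}$.

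Next, since $\delta\in\cV_\sigma\subset H^1_0(\Om_\vf)$ has vanishing trace on both the top and the bottom of the domain, for almost every fixed $x_\h\in\cO_\vf$ I write
\[
\delta(x_\h,z)=-\int_z^0\partial_z\delta(x_\h,z')\,dz' \andf \delta(x_\h,z)=\int_{-\vf(x_\h)}^z\partial_z\delta(x_\h,z')\,dz'\,,
\]
apply Cauchy--Schwarz to each representation, and take the minimum of the two resulting bounds. This yields the pointwise Hardy-type estimate
\[
|\delta(x_\h,z)|^2\leq d_\vf(x_\h,z)\int_{-\vf(x_\h)}^0|\partial_z\delta(x_\h,z')|^2\,dz'\,.
\]
Multiplying by $|w(x_\h,z)|^2$, integrating first in $z$ and then in $x_\h$, and pulling the $L^\infty_\h$ supremum out of the horizontal integral, one obtains
\[
\|\delta\,w\|_{L^2(\Om_\vf)}^2\leq \|\sqrt{d_\vf}\,w\|_{L^\infty_\h L^2_\v(\Om_\vf)}^2\,\|\partial_z\delta\|_{L^2(\Om_\vf)}^2\,.
\]
Combining this with the Cauchy--Schwarz bound above and using $\|\partial_z\delta\|_{L^2}\leq\|\nabla\delta\|_{L^2}$ delivers the announced inequality.

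There is no genuine obstacle: the argument is essentially the classical proof recalled in the book \cite{cdggbook}. The only point requiring a bit of care is that both the vertical integration range $]-\vf(x_\h),0[$ and the weight $d_\vf$ depend on $x_\h$, so that the usual flat-slab Hardy inequality is not directly applicable; however, since every step is performed at $x_\h$ fixed, this dependence is harmless, and the non-flat bottom enters only through the $x_\h$-dependent weight $d_\vf$, which is exactly how the anisotropic norm $L^\infty_\h L^2_\v(\Om_\vf)$ has been defined.
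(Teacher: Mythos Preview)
Your proof is correct and follows essentially the same approach as the paper: the same vertical Hardy-type inequality for $\delta\in H^1_0$ combined with Cauchy--Schwarz. The only cosmetic difference is the order of operations---you apply Cauchy--Schwarz first to isolate $\|\,|\delta|\,|w|\,\|_{L^2}$ and then the Hardy bound, whereas the paper inserts the Hardy bound directly into the trilinear integral before applying Cauchy--Schwarz in $z$ and then in $x_\h$.
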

Let us apply this lemma  with~$\d=\d_\e$ and~$w= U_{{\rm BL},\e}$.  Thanks to Proposition\refer {prop_U_app}, Inequality\refeq  {estim_Delta_WS_eq1} becomes
$$
\longformule{
\D_\e(t)  \leq  
\int_0^t \|\d_\e(t')\|^2_{L^2(\Om_\f)} \bigl \|\nabla    {(U_{\app,\e} -U_{{\rm BL},\e})}  (t',\cdot)\bigr\|_{L^\infty(\Om_\f)} dt'   
}
{ {} + 
\bigl (C\e\|u_0\|_{ L^\infty(\Omega_\f)}+C\e^2\|u_0\|_{ {W^{1,\infty}}(\Omega_\f)}\bigr)\int_0^t  \|\nabla \d_\e(t')\|^2_{L^2(\Om_\f)} dt' 
+ r_\e(t)\, .
}
$$
Choosing~$\e$ less than~$\b /\big(4C\|u_0\|_{ {W^{1,\infty}}(\Omega_\f)}\big)$ and using that~$C   \|u_0\|_{L^\infty(\Om_\f)}$ is less than~$\beta/4$, then we~get
$$
\D_\e(t)  \leq  
2 \int_0^t   \|\d_\e(t')\|^2_{L^2(\Om_\f)}  \|\nabla   {(U_{\app,\e} -U_{{\rm BL},\e})} (t',\cdot)\|_{L^\infty(\Om_\f)} dt'   
+ r_\e(t)\, .
$$
Thanks to Proposition \ref{prop_U_app} we know that the family~$\bigl(\nabla   {(U_{\app,\e} -U_{{\rm BL},\e})\bigr)_\e} $ is bounded in the space~$L^1(\R^+;L^\infty(\Om_\f))$.
Gr\"onwall's lemma then concludes the proof of Theorem\refer  {Ekman_topo_radial_0}, provided of course that we prove  Lemma\refer  {BLEkmantoporadpSmall}.
\end {proof}

\medbreak
\begin{proof}[Proof of Lemma\rm\refer  {BLEkmantoporadpSmall}]
Let~$a$  be a function of~$H^1_0(\Om_\f)$. Because~$a$ vanishes at the boundary, we can write
$$
a (x_\h,z) =-\int_z^0 \partial_\zrm a(x_\h,z') dz'= \int_{-\vf(x_\h)} ^z \partial_\zrm a(x_\h,z') dz'.
$$
The Cauchy-Schwarz inequality implies that
\beno
\bigl | a(x_\h,z) \bigr | &  \leq &  \bigl (\min \{-z,\vf(x_\h)+z\} \bigr)^{\frac 12} \biggl( \int_{-\vf(x_\h)} ^0 \bigl | \partial_\zrm a(x_\h,z')\bigr |^2 dz'\biggr)^{\frac 12}\\
&  \leq & d^{\frac 12} _\vf(x_\h,z)  \biggl( \int_{-\vf(x_\h)} ^0 \bigl | \partial_\zrm a(x_\h,z')\bigr |^2 dz'\biggr)^{\frac 12}.
\eeno
Applying this inequality to~$a = \delta^k$ we get
\beno
\bigl | \left(\delta \cdot \nabla \delta  | w
\right)_{L^2} \bigr | & = & \Big | \sum_{1\leq j,k\leq 3}
 \int_{\R^2_\h} \biggl( \int_{-\vf(x_\h)}^0  \delta^k(x_\h,z) \partial_k \delta^j(x_\h,z) w^j (x_\h,z) dz\biggr) dx_\h \Big |  \\
 & \leq  & \sum_{1\leq j,k\leq 3}
 \int_{\cO_\vf}  \biggl( \int_{-\vf(x_\h)} ^0 \bigl | \partial_\zrm \d^k (x_\h,z)\bigr |^2 dz\biggr)^{\frac 12} \\
 && \qquad\qquad\qquad  {} \times  \int_{-\vf(x_\h)}^0 \big|  \partial_k \delta^j(x_\h,z)  d^{\frac 12} _\vf(x_\h,z)  w^j (x_\h,z)  \big|dz dx_\h \,.
 \eeno
 Then the Cauchy-Schwarz inequality again implies that
 $$
 \longformule{
  \big|  \left(\delta \cdot \nabla \delta  | w \right)_{L^2} \big|   \leq
 \sum_{1\leq j,k\leq 3}
 \int_{\cO_\vf}  \biggl( \int_{-\vf(x_\h)} ^0 \bigl | \partial_\zrm \d^k (x_\h,z)\bigr |^2 dz\biggr)^{\frac 12}
 }
 { {} \times
 \biggl( \int_{-\vf(x_\h)}^0  \big(\partial_k \delta^j(x_\h,z)\big)^2 dz\biggr)^{\frac 12}  \biggl( \int_{-\vf(x_\h)}^0  d _\vf(x_\h,z)  (w^j)^2 (x_\h,z) dz\biggr)^{\frac 12}dx_\h\,.
}
$$
The lemma is thus proved.
\end{proof}

\medbreak
The proof of Proposition~\ref{Taylor-Proudman} is performed in Section~\ref{proof TP}. This proof  is rather short and classical; it is based on an Ascoli argument.
{In Section \ref{s:ansatz}, we present the structure of the proof of Proposition \ref {prop_U_app}, which is the main technical part of the paper and
will be performed in Sections \ref{interior0} to \ref{estim_error_terms}. Finally, in Section \ref{s:error-nonlin} we present the proof
of Proposition \ref{structure_NLterm}.}


\section{Proof of Proposition~\ref{Taylor-Proudman} }
\label{proof TP}

Proposition \ref{Taylor-Proudman} is, in some sense, a refined version of the classical Taylor-Proudman theorem. Let us start by using
Equation\refeq {defin1.2_eq} mutiplied by~$\e$. This implies that for any test function~$\Psi$, there is a constant~$C$ such that
$$
\biggl |\int_0^t \int _{\Om_\f} \bigl ((e_\zrm\wedge u_\e) \cdot \Psi \bigr)(t',x)dt'dx \biggr | \leq C \e\, .
$$

Thus, if~$w$ belongs to~$\cW$, a sequence~$\suite \e n \N$ tending to~$0$  exists such that
$$
\lim_{n\rightarrow \infty} \int_0^t \int _{\Om_\f} \bigl ((e_\zrm\wedge u_{\e_n}) \cdot \Psi \bigr)(t',x)dt'dx = \int_0^t \int _{\Om_\f} \bigl ((e_\zrm\wedge w) \cdot \Psi \bigr)(t',x)dt'dx = 0\, .
$$
Let us consider any function~$\psi$ in~$\cD(]0,\infty[\times \Om_\f)$, and  apply the above relation with
$$
\Psi= (-\partial_2 \psi, \partial_1 \psi,0)\,,\ \Psi= (-\partial_\zrm \psi, 0,\partial_1\psi)\andf  \Psi= (0, -\partial_\zrm \psi, \partial_2\psi)\,.
$$
By definition of the  derivation in the sense of distributions, this gives
$$
\langle \dive_\h w_\h ,\psi\rangle =- \langle \partial_\zrm w^2 ,\psi\rangle =  \langle \partial_\zrm w^1,\psi\rangle=0\, .
$$
As~$w$ belongs to~$L^2([0,T];\cH )$, then the vector field~$w(t,\cdot)$ belongs to~$\cH $  for almost every~$t$. In particular, it is divergence free, so, for almost every positive~$t$,
$$
\dive_\h w_\h(t,\cdot) = -\partial_\zrm w^\zrm (t,\cdot) =0 \, .
$$
Then, the vector field~$w$ must be of the form
\beq
\label {Taylor_Proudman_basic}
\bigl(w_\h(x_\h),w^\zrm (x_\h)\bigr) \with\dive_\h w_\h=0\, .
\eeq
The fact that, for almost every~$t$, the vector field~$w(t,\cdot)$ belongs to~$\cH $ implies also that
$$
w^\zrm (t,x_\h)= w_\h(t,x_\h)\cdot \nabla _\h\bigl(\f(\rho(x_\h))\bigr)=0\, .
$$
As we have assumed that the set~$\bigl\{x_\h\in \cO\,/\ \f'(\rho(x_\h))=0\bigr\}$ is negligible,  we infer that
the vector field~$w$ is, for almost every~$(t,x_\h)$, of the form $w=\big(w_\h,0\big)$, with
$$
w_\h(t,x_\h)=w^\theta(t,x_\h) \nabla_\h^\perp \rho(x_\h)\, .
$$
As~$w_\h$ is divergence free, we have also
\beno
 0 = \dive_\h w_\h (t,x_\h) = \nabla _\h w^\theta(t,x_\h)\cdot \nabla_\h^\perp \rho(x_\h)\, .
\eeno
This implies that~$w^\theta$ is constant on the  curves~$ \rho(x_\h)\equiv C$. Thus, we have
$$w^\theta(t,x_\h)=g(t,\rho(x_\h))\, .$$ The first part of the proposition is hence proved.
\begin {remark}
\label {proof_TP_remark1}
{\sl The proof above works in any domain~$\Om_\vf$ of the form\refeq  {defin_domain_general}. Thus, for any  {such domain $\Omega_\vf$}, an element of the kernel of the Coriolis operator is of the form~$\nabla_\h^\perp F(\vf)$.
}
\end {remark}

In order to prove the second part of the proposition, let us write $R_\e$ as
$$
R_{\e}(t) = \bigl(w_{\e}\big | {\bf 1}_{[0,t]} g \nabla \rho\bigr)_{L^2(\R^+\times \Om_\f)}\,.
$$
We take a sequence~$\big(w_{\e_n}\big)_n$ which tends weakly to~$w$ in~$L^2(\R^+\times \Om_\f)$. Then we have
$$
w(t,x_\h) = g(t,\rho(x_\h))\ \big(\nabla_\h^\perp \rho(x_\h),0\big) \andf \lim_{n\rightarrow \infty} R_{\e_n}(t) = \bigl(w_\h\big | {\bf 1}_{[0,t]} g \nabla_\h \rho\bigr)_{L^2(\R^+\times \Om_\f)}\, .
$$
As~$w_\h$  is colinear to~$\nabla_\h^\perp \rho$, we infer that
\beq
\label {proof TP_demoeq1}
\forall t \in [0,\infty[\,,\ \lim_{n\rightarrow\infty} R_{\e_n} (t) =0\,.
\eeq
Moreover, the  Cauchy-Schwarz inequality implies that, for all~$t' \leq t$,
\beno
\bigl |R_\e(t) -R_\e(t') \bigr| & \leq & \left|\int_{[t',t]\times \Om_\f}  g(s,x_\h,z) w_{\e,\h}(s,x_\h,z)\cdot\nabla_\h\rho(x_\h)dx_\h dzds\right|\\
& \leq & |t-t'| \, \|w_\e\|_{L^\infty(\R^+; L^2(\Om_\f))} \|g\|_{L^\infty(\R^+; L^2(\Om_\f))}\,.
\eeno
Then Ascoli's theorem implies that the set~$(R_\e)_{0<\e\leq \e_0}$ is a relatively compact subset of the space of continuous real valued functions on~$[0,T]$,
for any fixed time $T>0$. Because of\refeq {proof TP_demoeq1}, the sequence~$(R_{\e_n})_{n\in \N}$ tends uniformly to~$0$ on~$[0,T]$, and Proposition \ref{Taylor-Proudman} is proven. \qed

\medbreak
\begin{remark} {\sl Let us point out that we have no rate of convergence of~$R_\e$ to~$0$.   As we  have seen in the previous
section, namely Relation \eqref{estim_Delta_WS}, the rate of convergence of~$\ds \sup_{t\in [0,T] } |R_{\e}(t)|$ to~$0$
 {determines, or rather imposes a constraint on,} the rate of convergence of~$u_{\e}$ to~$\overline u$ in~$L^\infty([0,T];\cH(\Om_\f))$.}
\end{remark}


\section {The process of construction of the approximate solutions}
\label {s:ansatz}

We now start the proof of  {Propositions \ref{prop_U_app} and \ref{structure_NLterm}}. It consists of two parts: the first one is the precise construction of the family
of approximate solutions $U_{\app,\e}$  {(from Sections~\ref{BL0} to~\ref{full approximate})}, the second one is the estimate of the error terms  (Sections~\ref{estim_error_terms} and~\ref{s:error-nonlin}).
The goal of the present section is to explain the general strategy and the main ideas of the proof.

Before going into the details of the process, let us precise some notations and conventions which will be used in all that follows. First of all, as mentioned in the introduction, we are going to decompose any vector at a point~$(x_\h,z)$ of~$\Om_\f$  in the orthonormal basis 
$$
\bigl(\nabla_\h \rho (x_\h), \nabla_\h^\perp\rho(x_\h), e_\zrm\bigr)\, .
$$
\\
\noindent
Given a three-dimensional vector field~$U= \big(U^1,U^2,U^3\big)$, we denote by~$ \big(U^\rho,U^\theta,U^\zrm\bigr)$ its components in this basis. This means that 
\beq
\label {frame_coodinates}
U(x_\h,z) = U^\rho(x_\h,z) \nabla_\h \rho  (x_\h)+ U^\theta(x_\h,z) \nabla_\h^\perp \rho (x_\h)+ U^\zrm(x_\h,z) e_\zrm \, .
\eeq
Let us notice that, in this frame, if the component~$U^\theta$ is a function of~$\rho(x_\h)$ only {(which will turn out to be the case in what follows)}, then the divergence of~$U $ writes 
\beq
\label {div_rotating_frame_coodinates}
\begin {aligned} 
& \dive \big(U^\rho(x_\h,z) \nabla_\h \rho  (x_\h)+ U^\theta (\rho(x_\h))\nabla_\h^\perp \rho (x_\h) +U^\zrm e_\zrm \bigr) \\
&\qquad \qquad\qquad \qquad\qquad \qquad \qquad {}= \dive_\h \big(U^\rho (x_\h,z)\nabla_\h \rho  (x_\h)\bigr)+\partial_\zrm U^\zrm(x_\h,z)\,.
\end  {aligned} 
\eeq

For simplicity of notation, for a function of the type~$f(\rho(x_\h))$, we shall often omit to note explicitly the dependence in~$x_\h$ and simply write~$f(\rho)$.

 Moreover, 
we will adopt the following notation for boundary layers: 
given~$g^\top$ and~$g^\bot$ two functions defined
on~$\R ^2\times \R^-$, representing boundary layer terms respectively near the surface~$\{z=0\}$ and near the bottom~$\{z=-\phi(\rho)\}$ of the domain~$\Omega_\phi$, we denote
\beq
\label {defin_form_BL}
g_{{\rm BL}}^\top (x_\h, z) \eqdefa g^\top \biggl(x_\h, \frac z {\sqrt {E}}\biggr) \andf 
g_{{\rm BL}}^\bot (x_\h, z) \eqdefa g^\bot \biggl(x_\h, -\frac  {z+\f(\rho(x_\h)) } { \d(\rho(x_\h))\sqrt {E} }\biggr)\,,
\eeq
where~$E$ is the Ekman number, defined  {(this will be justified later, see~(\ref{defE2}))} as~$E\eqdefa 2\b\e^2$
and~$\d=\d(\rho)$ is a function on~$[\rho_0,\infty[ $, which will be determined later on, see~(\ref{curved_BL2}).
Let us notice that the functions~$g^\surf$ and~$g^\bot $ are always assumed (sometimes implicitly) to have limit~$0$ when the variable~$\zeta$ tends to~$-\infty$.\footnote{In fact, they are always exponentially decaying at infinity.}  Here and in all that follows, the ``fast variable" is denoted~$\zeta$.
Notice also that the functions with an index~${\rm BL}$ depend on~$\e$ through the size of the boundary layer (respectively~$\sqrt E$ and~$ \d \sqrt {E} $).

Let us point out a major difference between boundary layer terms related to a flat boundary and those related to a curved boundary. In the case of a flat boundary, the derivative with respect to the horizontal variable~$x_\h$ does not generate terms of order~$-1$, see~$g^\top_{{\rm BL}}$ defined above. In the case of the term $g_{{\rm BL}}^\bot$ related to the curved boundary, instead, we have, for $j$ in~$\{1,2\}$,
the formula
\begin{align*}
\partial_{j} \bigl(g_{{\rm BL}}^\bot (x_\h, z) \bigr)
&=
(\partial_{j} g^\bot)\biggl(x_\h, -\frac  {z+\f(\rho(x_\h)) } { \d(\rho(x_h)) \sqrt {E}}\biggr)  \\
&\qquad -\partial_{j} \biggl (\frac  {z+\f(\rho(x_\h)) } { \d(\rho(x_h)) \sqrt {E} }\biggr) 
(\partial_\zeta g^\bot)\biggl(x_\h, -\frac  {z+\f(\rho(x_\h)) } { \d(\rho(x_h)) \sqrt {E}}\biggr) \\
& =(\partial_{j} g^\bot)\biggl(x_\h, -\frac  {z+\f(\rho(x_\h)) } { \d(\rho(x_h)) \sqrt {E}}\biggr)  \\
&\qquad +
 (\partial_{j}\rho)(x_\h)\frac { \d'(\rho(x_h))}{ \d(\rho(x_h))} \frac {z+\f(\rho(x_\h))} { \d(\rho(x_h)) \sqrt {E}}
(\partial_\zeta g^\bot)\biggl(x_\h, -\frac  {z+\f(\rho(x_\h)) } { \d(\rho(x_h)) \sqrt {E}}\biggr) \\
&\qquad\qquad
-(\partial_{j}\rho)(x_\h) \frac {\phi'(\rho(x_\h))} { \d(\rho(x_h)) \sqrt {E}}  (\partial_\zeta g^\bot)\biggl(x_\h, -\frac  {z+\f(\rho(x_\h)) } { \d(\rho(x_h)) \sqrt {E}}\biggr)\cdotp
\end{align*}
Notice that this can be written in a more compact way as
\beq
\label {derrBL1}
\partial_{j} g_{{\rm BL}}^\bot = \biggl(\partial_{j} g^\bot -   \frac { \d'(\rho) } { \d(\rho) } \partial_{j}\rho \ \zeta \partial_\zeta g^\bot\biggr)_{{\rm BL}} -
\frac {\phi'(\rho) } { \d(\rho) \sqrt {E} } \partial_{j}\rho \ (\partial_\zeta g^\bot)_{{\rm BL}}\,.
\eeq
Observe that the first term in~\eqref{derrBL1} is of order~$0$, whereas the second one is of order~$-1$  {due to the presence of~$\sqrt {E}$ at the denominator}, but vanishes identically  as soon as
the bottom is flat ($\f '\equiv 0$ is that case).

The fact that  horizontal derivatives of boundary layers at the bottom generate terms of order~$\e^{-1}$ is the reason why we assume that the viscosity is of size~$\e$ in all   directions and not only in the vertical one, as in\ccite {cdggbook} for instance.
This fact has a deep consequence also on the computation of the divergence of the boundary layer vector fields, which plays a crucial role in
the determination of the Ekman pumping term. Indeed, using the  above formula\refeq {derrBL1}, we infer~that
\beq
\label {compute_div_BL}
\begin{aligned}
\dive U_{{\rm BL}}^{\top} & = 
\big(\divh U^{\h,\top}\big)_{{\rm BL}} +\frac 1 {\sqrt {E}} \bigl(\partial_\zeta  U^{\zrm,\top}\bigr)_{{\rm BL}} \andf \\
\dive U_{{\rm BL}}^{\bot} & = 
\Big(\divh  U^{\h,\bot}-
\frac {\d'(\rho) } { \d(\rho)}  \zeta \ \partial_\zeta U^{h,\bot} \cdot\nabla_\h\rho \Big)_{{\rm BL}} \\
& \qquad\qquad\qquad\qquad\qquad\qquad{}
-\frac{\phi'(\rho)}{ \d(\rho) \sqrt {E}} \Bigl(\partial_\zeta U^{\zrm,\bot} + \nabla_\h\rho\cdot \partial_\z U^{\h, \bot}\Bigr)_{{\rm BL}}. 
\end{aligned}
\eeq

After these clarifications, let us present the general strategy of the proof of Proposition \ref{prop_U_app} and
how it is developed. 
Classically, in order for $U_{\app,\e}$ to be a good approximate solution, the leading order term should be close to the expected limit $\oline u$,
which should lie in the weak closure of the family~$(u_\e)_{0<\e\leq\e_0}$, hence, according to Proposition~\ref{Taylor-Proudman}, of the form
\begin{equation} \label{def:u^theta}
\oline u \eqdefa u^\theta(t,\r) \nabla_\h^\perp \r 
\qquad  \mbox{ where the function $u^\theta(t,\r)$ must be found. } 
\end{equation}
Thus,
we will choose the term of order $0$ in the interior, denoted~$U_{0,\rmint}$, to be close (in a sense specified below) to the profile~$\oline u$. Of course, this profile does not satisfy the Dirichlet boundary conditions, neither at the surface nor at the bottom of the ocean, so we have to introduce
correctors in the definition of~$U_{\app,\e}$, in the form of boundary layers.

Section\refer {interior0} is devoted to the construction of the leading order terms in velocity and pressure, denoted~$U_{0,\rmint} $ and~$P_{0,\rmint} $. 

Section\refer {BL0} starts with the computation of the boundary layer of order~$0$ on the surface. Despite the fact that this is classical (see for instance\ccite{cdggbook}), we expose it here as a warm up, and also as an opportunity to get familiar with the use of the frame~$\big(\nabla_\h \rho (x_\h), \nabla_\h^\perp\rho(x_\h), e_\zrm\bigr)$. Then we compute the boundary layer at the bottom and  determine the value of the function~$\d$, which is the cause of the term~$\ds \sqrt [4]{1+(\f' )^2}$ appearing in the definition of the modulated Ekman pumping term~$\lambda_\f$ (see Formula\refeq  {form of the limit} of the statement of Theorem\refer {Ekman_topo_radial_0}).

At the end of that section, we have  {computed} the  boundary layer term of order~$0$ at the surface and at the bottom in terms of~$U_{0,\rmint}$, which is still to be fully determined, in order to have   \beq
\label {defin_U_0BL}
\big(U_{0,\rmint} +U_{0,\rm BL}^\surf +U_{0,\rm BL}^\bot\bigr) _{| \partial \Om_\f} \sim 0 \, ,
\eeq
in the sense that it is exponentially small with~$\e$.
Note that this decomposition of the velocity field  gives rise to a similar decomposition of the pressure under the form
\[
P_{0,\rmint} +P_{0,\rm BL}^\surf +P_{0,\rm BL}^\bot\, .
\]

In   Section~\ref{cut-off at the shore}, we  deal with the problem of the shore.  As  the two boundary layers constructed previously  are of  size~$\e$, they  meet near the shore. In particular the property that a boundary layer on one boundary should be small near the opposite one, is no longer valid near the shore, when the distance between the surface and the bottom is of size  {smaller than}~$\e$. In order to bypass  this difficulty and to reattach the boundary layers, we introduce two cut-offs for each  boundary layer, one at a distance~$\e^{1-a}$ from the surface  or the bottom, and another one to avoid the shore.

More rigorously, for some positive~$a$ sufficiently close to~$1$ ({whose precise value will be fixed} in Section\refer {estim_error_terms}),  let us define 
\begin{equation} \label{def:O_e}
\cO_\e\eqdefa \bigl\{ x_\h\in \R_\h^2\,/\ \f(\rho(x_\h))\geq 2\e^{1-a}\bigr\}\, .
\end{equation}
This set represents the parts of the ocean with depth greater than or equal to~$2\e^{1-a}$,  {see Figure \ref{fig:cut-off} below.}
 \vspace{-0.8cm}
 \begin{figure}[H] 
\includegraphics[width=2.5in]{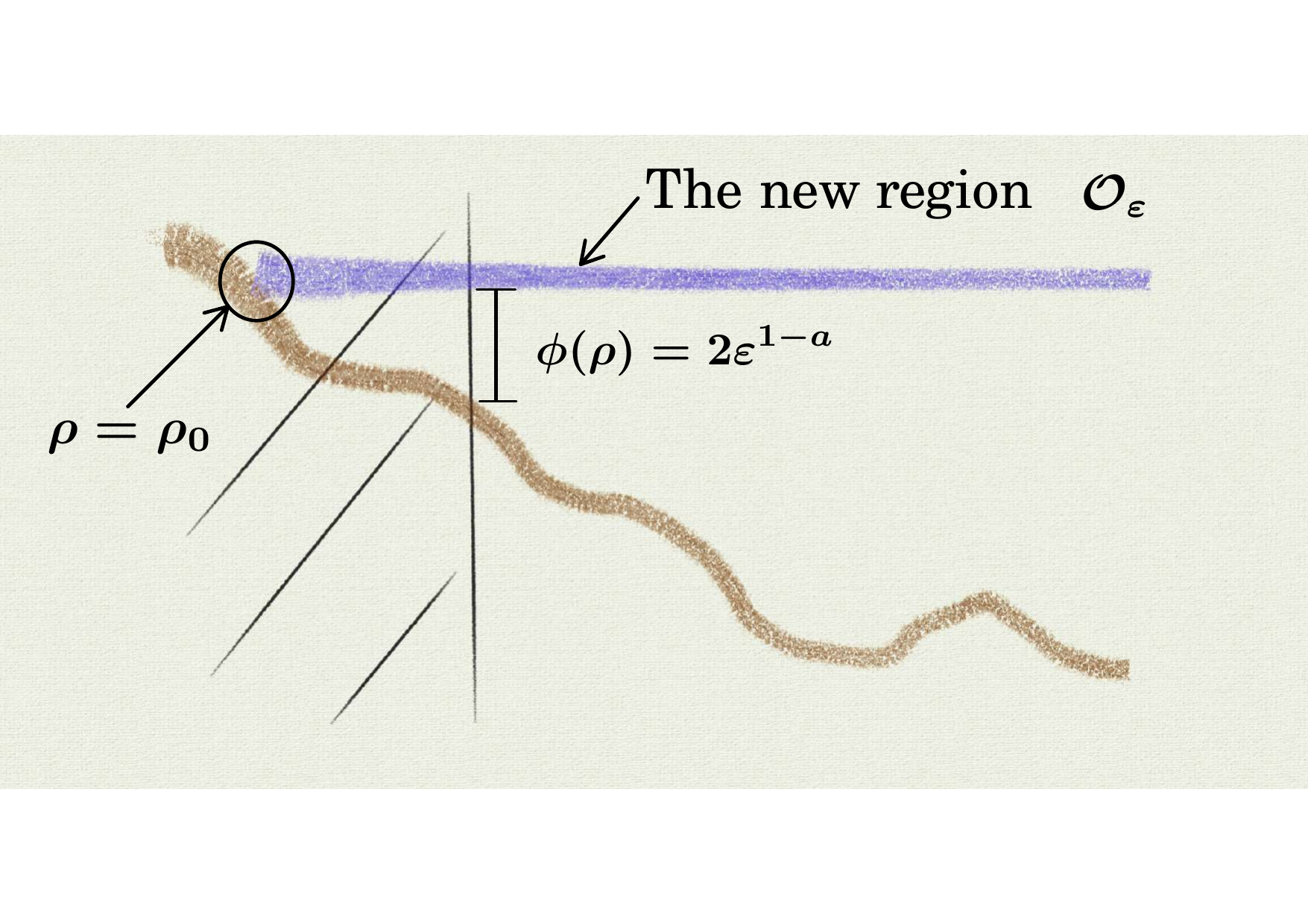} \vspace{-0.8cm}
\caption{\small The cut-off and the ocean region $\mc O_\e$.}
\label{fig:cut-off}
\end{figure}

Then, 
two functions~$\wt g^\surf$ and~$\wt g^\bot$ on~$\R^+\times \cO\times\R^-\times \R^+$ being given, representing respectively the boundary layers  {(with cut-offs)} at the surface and at the bottom of the domain~$\Om_\f$, we define 
\beq
\label {defin_BLtilde}
\begin {aligned}
\wt g_{{\rm BL}}^\top (t,x_\h,z) &\eqdefa  \wt g^\surf \biggl(t, x_\h,\frac z {\sqrt E}\virgp-\frac z {\e^{1-a}}\biggr)\andf\\
 \wt g_{\rm BL}^\bot  (t,x_\h,z) &\eqdefa \wt g^\bot \biggl(t,x_\h,-\frac {z+\f(\rho)}  {\d\sqrt E}\virgp\frac {z+\f(\rho)} {\e^{1-a}}\biggr)\,\cdotp
\end  {aligned}
\eeq
 These new, truncated boundary layers, now depend on~$\e$ both through the size of the boundary layers and through the cut-off. 
 The main point of the cut-off is that, as claimed by the forthcoming Proposition\refer  {Dirichlet_shore}, if two functions~$\wt g^\surf$ and~$\wt g^\bot$ have both their support included in~$\R^+\times \cO_\e\times\R^-\times ]0,1[$, then 
$$
\Supp \wt  g_{\rm BL}^\surf\cap  \Supp \wt g_{\rm BL}^\bot =\emptyset\, .
$$
The form\refeq {defin_BLtilde} of these new boundary layers leads us to introduce{, in order to ensure the divergence free condition,} an Ansatz which is different from the classical one (see for instance\ccite  {cdggbook}) and is of the following form:
\beq
\label {defin_new_Ansatz}
  {\wt U_{0,\rmint,\e} }  +
\wt U_{0,{\rm BL}}^\top+ \e^a\wt U_{a,{\rm BL}}^\top +\e \wt U^\top_{1, {\rm BL}} + \wt U_{0,{\rm BL}}^\bot +\e^a\wt U_{a,{\rm BL}}^\top+\e \wt U^\bot_{1, {\rm BL}} +\cdots,
\eeq
  {and, for the pressure function,
$$
\wt P_{0,\rmint,\e}
 + \wt P_{0, {\rm BL}}^\top +\e \wt P_{1, {\rm BL}}^\top +\wt P_{0, {\rm BL}}^\bot +\e \wt P_{1, {\rm BL}}^\bot+\cdots.
$$
Note that, a priori, we expect the pressure   not  to be so much affected by the truncation of the velocity. This will have to be confirmed in the computations that follow.}
We remark that, owing to the presence of the cut-off near the shore,   the leading order term~$\wt U_{0,\rmint,\e} $ now depends on $\e$: we set
\begin{equation} \label{newdefU0int}
\wt U_{0,\rmint,\e} \eqdefa u^\theta_\e(t,\r) \nabla_\h^\perp \r\,,  \qquad \mbox{ \tsl{i.e.} }\qquad
\wt U^\theta_{0,\rmint,\e}\eqdefa u^\theta_\e(t,\r)\,,
\end{equation}
where the function $u^\theta_\e$ is supported on $\R^+\times\mc O_\e$ and is an approximation of $u^\theta$ from \eqref{def:u^theta} on that set. 
Note that~$\wt U_{0,\rmint,\e} $ is divergence free. All the functions appearing in the Ansatz are now functions of~$u^\theta_\e$, hence of~$u^\theta$, to be determined.

In  Section~\ref{BL_order1}, we observe that the vector field defined {by $\wt U_{0,\rmint,\e} + \wt U_{0,{\rm BL}}^\top + \wt U_{0,{\rm BL}}^\bot$
now satisfies the equality in\refeq {defin_U_0BL}}, but does not satisfy the divergence free condition. Imposing the equality
$$
\dive  \left (\wt U_{0,\rmint,\e}   +
\wt U_{0,{\rm BL}}^\top+ \e^a\wt U_{a,{\rm BL}}^\top +\e \wt U^\top_{1, {\rm BL}} + \wt U_{0,{\rm BL}}^\bot +\e^a\wt U_{a,{\rm BL}}^\top+\e \wt U^\bot_{1, {\rm BL}} \right)=0
$$
allows to determine~$\wt U_a^\surf$ and~$\wt U_a^\top$  {in terms of~$\wt U_{0,\rmint,\e} $} and introduces some constraints on~$\wt U^\top_{1}$ and~$\wt U^\bot_{1}$.

In   Section~\ref{s:Dir-1}, we observe that the correction made previously to ensure the divergence free condition  leads to {the violation of} the Dirichlet boundary condition. Thus we introduce a vector field~$\e { \wt U_{1,\rmint,\e}}$  at the interior  to ensure the Dirichlet boundary condition. Let us notice that plugging this term~$\e\wt U_{1,\rmint,\e}$ {into $(SC_\e)$ (recall its definition just above Proposition \ref{prop_U_app})}
will produce a term of order~1 in~$\e$, namely~$  e_{\zrm} \wedge \wt U_{1,\rmint,\e}$. This term determines the equation satisfied by~$ \wt U_{0,\rmint,\e} $ and puts in light the Ekman pumping phenomenon. Thanks to $\wt U_{1,\rmint,\e}$, we also fully determine the boundary layer terms
of order $1$, namely $\wt U_{1,\rm BL}^\top$ and $\wt U^\bot_{1,\rm BL}$.

Section~\ref{full approximate} is devoted to the end of the construction of the full approximate solution couple~$(U_{\app,\e} ,P_{\app,\e}) $, which requires a last correction
in order to ensure the divergence free constraint, yet without violating the Dirichlet boundary conditions.

In  Section~\ref {estim_error_terms}, we prove that the approximated solution constructed in the previous sections is truly a good approximation
of the target profile $\oline u$, by a precise estimation of the error terms. This will prove Statement~\eqref {prop_U_app_eq2} of Proposition~\ref{prop_U_app}.
Statements~\eqref{bdd gradients} and~\eqref{small BL} are also proved in that section.
 
Finally, in Section~\ref{s:error-nonlin} we check that the couple~$(U_{\app,\e} ,P_{\app,\e}) $ not only solves approximately the linear problem~$(SC_\e)$ (recall the system~$(SC_{\app,\e} )$ given in Proposition~\ref{prop_U_app}),
but also the nonlinear system~$(NSC_\e)$, in some sense. This ends the proof of Propositions~\ref{prop_U_app} and~\ref {structure_NLterm}.

\section  {The interior terms at order~$0$}
\label{interior0}

The first step of the analysis consists in inserting   the first order term of the Ansatz in velocity and pressure, namely the couple~$\big(U_{0,\rmint} ,P_{0,\rmint}  \big)$, into the linear equations~$(SC_\e)$, {which we recall here for the reader's convenience:
\[
\left\{
\begin{array}{c} 
\ds \partial_t u_\e - \e\b\D u_\e  +\frac 1 \e e_{\zrm} \wedge u_\e + =-\frac 1 \e \nabla p_\e\,,\\
\ds  {u_\e}_{|t=0} = u_0\,,\\
\ds \dive u_\e =0\andf {u_\e}_{|\partial\Om_\vf }=0\,  .
\end{array}
\right.
\leqno(SC_\e)
\]
} Identifying and canceling the highest order terms provides, in the frame~$\big(\nabla_\h\r,\nabla^\perp_\h\r,e_\zrm\big)$,
$$
 \left\{
\begin{array} {c}
\ds  -  U_{0,\rmint} ^{\theta} =- \nabla_\h P_{0,\rmint} \cdot  \nabla_\h\r \\\\
\ds   U_{0,\rmint} ^{\rho }  = - \nabla_\h P_{0,\rmint} \cdot  \nabla^\perp_\h\r \\\\
\ds 0 = - \partial_{\rm z} P_{0,\rmint}\\\\ 
\ds \dive U_{0,\rmint} = 0 \, .
\end{array}
\right.
$$
Thus as in the proof of Proposition~\ref{Taylor-Proudman} we   find    that
\beq\label{defU0thetaint}U_{0,\rmint} ^{\theta } (t,x_\h,z)=  u^\theta(t,\rho(x_\h)) \, , \quad U_{0,\rmint} ^{\rho } = U_{0,\rmint} ^{\rm z }= 0\, .
\eeq  
Finally
\begin{equation}\label{defp1eps}
{P_{0,\rm int} (t,\rho,z)} \eqdefa   -\int^\rho_0 u^\theta(t,\s)\,d\s \, .
\end{equation}


\section  {The Boundary layers at order~$0$}
\label{BL0}

 We are looking for~$U_{\app,\e}$ as an approximation of~$U_{0,\rmint}$ computed in~\eqref{defU0thetaint} above in terms of an unknown function~$u^\theta$. As it does not satisfy the boundary conditions, neither on the surface nor at the bottom,   we need to introduce boundary layer corrections for the velocity field and the pressure, under the form \refeq {defin_form_BL}. In order to determine those corrections, we insert those terms in the equation~$(SC_\e)$ and try to make each term of the expansion, in   powers of~$\e^{-1}$, equal to~$0$.
As recalled above, from now on all the vector fields will be expressed in the frame~$\big(\nabla_\h\r,\nabla^\perp_\h\r,e_\zrm\big)$  rather than the usual Cartesian frame.

\medskip

Let us start with the easier case of the boundary terms on the surface; the computations are classical (see for instance\ccite {cdggbook}) and we reproduce them as a warm up.  {We assume that~$E/\e$ goes to zero with~$\e$. Then the highest order term is the  term of power~$(\e\sqrt {E})^{-1} $}, appearing in the equation on the third component of system~$(SC_\e)$:
$
\displaystyle -\frac 1 {\e \sqrt {E}} \partial_\zeta P_{0}^\top =0
$.
This implies that~$\partial_\zeta P_{0}^\top=0$ , and since~$P_{0}^\top$ tends to~$0$ when~$\zeta$ tends to~$-\infty$,\beq\label{P0surfzero}
P_{0,{\rm BL}}^\top= 0\, .
\eeq
Next, we want to cancel the  {next terms concerning the top boundary layer, in~$(SC_\e)$ and on the equation on the divergence}.  This  implies the relations
$$
 \left\{
\begin{array} {c}
\ds  -\frac {\e \b}  E \partial_\zeta^2U_0^{\rho ,\top} - \frac 1\e U_0^{\theta,\top} =0 \\\\
\ds -\frac {\e \b}  E  \partial_\zeta^2U_0^{\theta,\top} + \frac 1\e U_0^{\rho ,\top}  = 0\\\\
\ds  -\frac {\e \b}  E  \partial_\zeta^2U_0^{\zrm,\top}  = -\frac 1 {\sqrt E } \partial_\zeta P_1^\top\\\\
\ds - \frac 1 {\sqrt E } \partial_\zeta U_0^{\zrm,\top} =0\,.
\end{array}
\right.
$$
The  first two equations imply that the two terms~$\ds \frac {\e \b}  E $ and $\ds \frac 1 \e$ must be equivalent. This justifies the choice
\beq
\label{defE2}
E\eqdefa 2\b\e^2\,,
\eeq
which is the classical definition of the Ekman number. The system  thus becomes
\beq
\label{BLsurf}
 \left\{
\begin{array} {c}
\ds  -\frac 12 \partial_\zeta^2U_0^{\rho ,\top} -  U_0^{\theta,\top} =0 \\\\
\ds -\frac 12 \partial_\zeta^2U_0^{\theta,\top} + U_0^{\rho ,\top}  = 0\\\\
\ds  -\frac 12 \partial_\zeta^2U_0^{\zrm,\top}  = -\frac 1 {\sqrt {2\b} } \partial_\zeta P_1^\top\\ 
\ds - \frac 1 {\sqrt {2\b} } \partial_\zeta U_0^{\zrm,\top} =0\,.
\end{array}
\right.
\eeq
Using again  the fact that the boundary layer functions have fast decay at infinity,  we infer~that 
\begin{equation} \label{eq:U_0-P_1-SURF}
U_0^{\zrm,\top} \equiv 0\andf P_1^\top\equiv 0\, .
\end{equation}
The above system then becomes
\begin{equation} \label{BL0system}
\left\{
\begin{array} {c}
\ds - \frac 12 \partial_\zeta^2U_0^{\rho ,\top} -  U_0^{\theta,\top} =0 \\\\
\ds- \frac 12  \partial_\zeta^2U_0^{\theta,\top} + U_0^{\rho ,\top}  = 0\,.
\end{array}
\right.
\end{equation}
Let us look for~$U_{0,{\rm BL}}^\top$  in the basis~$\big(\nabla_\h\r,\nabla^\perp_\h\r,e_\zrm\big)$,
under the form $$\begin{pmatrix} 
 U_{0}^{\rad,\surf}\\U_{0}^{\theta,\top} 
 \end{pmatrix}
 =M^{\top} (\zeta)
 \begin{pmatrix} 
0\\u^\theta\,,
\end{pmatrix}
$$
where~$u^\theta$ is the function introduced above in~(\ref{def:u^theta}), to be determined. The matrix~$M^\top(\z)$ will be determined in the following steps.
System~(\ref{BL0system}) becomes
\beq
\label {flat_BL1}
\left \{\begin{array} {c}
\ds -\frac 1 2 \frac {d^2} {d\zeta^2}\ M^\top(\zeta)+  {\mc R M^\top (\zeta) }=0\, \\\\
M^\top (0)=-\Id_{\R^2}\andf M^\top (-\infty)=0\, ,
\end{array}
\right.
\eeq
 {where the rotation matrix $\mc R$ is defined by $\mc R =\begin{pmatrix} 0 & -1\\ 1 & 0 \end{pmatrix} $}.
The solution to System \eqref{flat_BL1} is~$\ds 
M^\top (\zeta) = -e^\zeta \begin{pmatrix} \cos \zeta & -\sin\zeta\\ \sin \zeta & \cos \zeta
\end{pmatrix}$.
This implies
\beq
\label {flat_BL2}
\begin{aligned}
U_0^\top (t,\rho,\zeta) &= -u^\theta (t,\rho)   e^\zeta
\begin{pmatrix}
-\sin\zeta \\ \cos\zeta \\ 0
\end{pmatrix}\,,
\quad \hbox{thus}\\
  U_{0,{\rm BL}}^\top (t,\rho,z) &= 
-u^\theta (t,\rho)    e^{\frac z {\sqrt E}} \begin{pmatrix}
\ds -\sin\biggl( \frac z {\sqrt E}\biggr) \\ \ds \cos\biggl( \frac z {\sqrt E}\biggr) \\ 0
\end{pmatrix} \quad \mbox{ in the basis }\quad \big(\nabla_\h\r,\nabla_\h^\perp\r,e_\zrm\big)\,.
\end{aligned}
\eeq

Now, let us study the boundary layer terms at the bottom. Again, we start by considering the terms of order~$-2$ {, assuming a priori that~$\delta$ is of order~$0$}: the equation on the third component of~$(SC_\e)$ ensures that 
$\displaystyle
\frac 1  {\e\d\sqrt {E}} \partial_\zeta P_{0}^\bot \equiv 0$, thus~$\partial_\zeta P_{0}^\bot$ is identically~$0$. Since~$P_{0}^\bot$ must vanish for~$\zeta=-\infty$, we find
\beq\label{P0botzero}
P_{0,{\rm BL}}^\bot = 0 \, .
\eeq
Next, we compute the term of order~$-1$ in the equation of the bottom boundary layer. As for the surface case, we want to cancel the terms of order~$-1$
of the equation at the bottom boundary layer and of the divergence of~$U_{0,{\rm BL}}^\bot$. Using Formula\refeq {derrBL1} and its
corollary\refeq {compute_div_BL} about the divergence, we infer that 
$$
 {({\rm BL})_{\bot}}\quad\left\{
\begin{array} {c}
\ds -\frac {1+\f '^2}{ 2 \d^2} \partial_\zeta^2 U_{0}^{\rho,\bot} -  U_0^{\theta,\bot}  = \frac {\f '} {\d\sqrt {2\b} } \partial_\zeta P_1^\bot \\\\
\ds -\frac {1+\f '^2}{ 2 \d^2} \partial_\zeta^2U_0^{\theta,\bot} +  U_{0}^{\rho,\bot}  = 0\\\\
\ds -\frac {1+\f '^2}{ 2 \d^2} \partial_\zeta^2U_0^{\zrm,\bot}  = \frac 1 {\d\sqrt {2\b} } \partial_\zeta P_1^\bot\\\\
\ds -\frac {\f '} {\d\sqrt {2\b} } \partial_\zeta  U_{0}^{\rho,\bot}  - \frac 1 {\d\sqrt {2\b} } \partial_\zeta U_0^{\zrm,\bot} =0\,.
\end{array}
\right.
$$
Because the boundary layer functions have value~$0$  at~$-\infty$, the last two equations become
\beq
\label {curved_BL1}
U_0^{\zrm,\bot} =-\f '  U_{0}^{\rho,\bot} \andf P_1^{\bot} = -\frac {\sqrt {2\b}}  2\frac {1+\f '^2} \d  \f ' \partial_\zeta  U_{0}^{\rho,\bot}\, .
\eeq

\begin{remark}
{\sl Because of Equation~\eqref{curved_BL1}, we see that the pressure term~$P_{1,{\rm BL}}^\bot$ is not identically~$0$. This marks a difference with
the classical case of a flat bottom, see Formula~\eqref{eq:U_0-P_1-SURF}.}
\end{remark}

Relation~\eqref{curved_BL1} allows to recast the above system in the following reduced form:
$$ 
\left\{
\begin{array} {c}
\ds -\frac {1+\f '^2}{ 2 \d^2} \partial_\zeta^2 U_{0}^{\rho,\bot} -  U_0^{\theta,\bot}  = \frac {\f '^2(1+\f '^2)} {2\d^2 } \partial_\zeta^2 U_{0}^{\rho,\bot}\\\\
\ds -\frac {1+\f '^2}{ 2 \d^2} \partial_\zeta^2U_0^{\theta,\bot} +  U_{0}^{\rho,\bot}  = 0\, ,
\end{array}
\right.
$$
which in turn writes as
\beq
\label {curved_BL01}
\left\{
\begin{array} {c}
\ds -\frac {(1+\f '^2)^2}{ 2 \d^2} \partial_\zeta^2 U_{0}^{\rho,\bot} -  U_0^{\theta,\bot}  = 0\\\\
\ds -\frac {1+\f '^2}{ 2 \d^2} \partial_\zeta^2U_0^{\theta,\bot} +  U_{0}^{\rho,\bot}  = 0 \,.
\end{array}
\right.
\eeq

We reduce the above~$2\times 2$ linear system of order~$2$ to a linear ordinary differential equation of order~$4$,
that is
$$
\partial_\zeta^4 U_0^{\theta,\bot} =-\frac {4\d^4} {(1+\f '^2)^3}U_0^{\theta,\bot} \,.
$$
Looking for the function~$U_0^\bot$ under the form~$v(\rho) g(\zeta)$, we see that the only choice for~$\d$ is 
\beq
\label {curved_BL2}
\d^4  =(1+\f '^2)^3 \qquad\hbox{i.e.}\qquad \d\big(\rho(x_\h)\big)= \Big(1+(\f ')^2\big(\rho(x_\h)\big)\Big)^{\frac 3 4}.
\eeq
Moreover, the solutions of the ordinary differential equation~$\om^{(4)}=-4\om$ are of the
form~$\om=\sum_{\pm} e^{\pm \zeta} \bigl(A_\pm \cos \zeta+B_\pm \sin \zeta\bigr)$.
As the function~$U_0^{\theta,\bot} $ must tend to~$0$ when~$\zeta$ tends to~$-\infty$, it must be the form
\[
U_0^{\theta,\bot} = e^{ \zeta} \bigl(A \cos \zeta+B\sin \zeta\bigr).
\]
As we have~$(\partial_\zeta^2 U_0^{\theta,\bot})_{|\z=0}=0$ by the second equation in~\eqref{curved_BL01}, we deduce that~$B=0$. In addition, the fact
that~$U_0^{\theta,\bot}(t,\rho,0)=-u^\theta  (t,\rho) $ implies that~$U_0^{\theta,\bot}(t,\rho,\zeta) = -u^\theta  (t,\rho)  e^\zeta\cos\zeta$.
In the end, we get the formula, as usual expressed in the reference frame ~$\big(\nabla_\h\r,\nabla_\h^\perp\r,e_\zrm\big)$,
\[
U_0^\bot (t,\rho,\zeta) = -u^\theta (t,\rho)   e^{\zeta} \begin{pmatrix}
-\d^{-\frac 23} \sin\zeta\\
\cos\zeta\\
\f '\d^{-\frac 23} \sin\zeta
\end{pmatrix}\,,
\]
which can also be written, again in the basis~$\big(\nabla_\h\r,\nabla_\h^\perp\r,e_\zrm\big)$,  as
\beq
\label {curved_BL4}
U_{0,{\rm BL}}^\bot (t,\rho,z) = -u^\theta (t,\rho)   e^{- \frac {z+\f } {\d \sqrt E} }  \begin{pmatrix}
\ds \d^{-\frac 23} \sin \biggl ( \frac {z+\f } {\d \sqrt E} \biggr)\\
\ds \cos\biggl ( \frac {z+\f } {\d \sqrt E} \biggr)\\
\ds -\f '\d^{-\frac 23} \sin \biggl ( \frac {z+\f } {\d \sqrt E} \biggr)
\end{pmatrix}.
\eeq
Recall that $\phi=\phi(\rho)$ and $\d=\d(\rho)$ in the formulas above, as well as~$E= 2\beta\e^2$.

Finally, returning to the approximate pressure, we recall that
$$
P_{0,{\rm BL}}^{\surf}  = P_{1,{\rm BL}}^{\surf}= P_{0,{\rm BL}}^{\bot}  = 0 $$
and we have, thanks to~\eqref{curved_BL1},
\beq
\label {P1bot}
P_{1,{\rm BL}}^{\bot} (x_\h,z)= -\frac {\sqrt {2\b}}  2\frac {1+\f '^2} {\d^\frac53}  \f ' 
u^\theta (t,\rho)    e^{- \frac {z+\f } {\d \sqrt E} }  \Big(
 \sin \bigl ( \frac {z+\f } {\d \sqrt E} \bigr)+ \cos \bigl ( \frac {z+\f } {\d \sqrt E} \bigr)
\Big)\, .
\eeq


\section {The cut-off of boundary layers of order~$0$ near the shore} \label{cut-off at the shore}

In this section,  we introduce a cut-off near the shore, in order to restrict~$x_\h$   to~$\cO_\e$, the part of the ocean with depth greater than or equal to~$2\e^{1-a}$, {recall Definition~(\ref{def:O_e}) above.}
Let us consider a non-negative function~$\chi$ of~$\cD([0,1[)$ with value~$1$ in a neighbourhood of~$[0,1/2]$, and let us define  the function 
\begin{equation} \label{def:u^theta_e}
u^\theta_\e(t,\rho) \eqdefa \left(1 - \chi\Big(\frac{\phi(\rho)}{\e^{1-a}}\Big)\right) \ u^\theta(t,\rho)\,,
\end{equation}
where $u^\theta(t,\rho)$ is  {the function appearing in~\eqref{def:u^theta}  and which} has to be determined.
Let us observe that
\[
\supp u^\theta_\e \subset \cO_\e\,.
\]
Accordingly we set, as presented in~(\ref{newdefU0int}),
\beq\label{newdefU0int2}
\wt U_{0,\rmint,\e} {(t,x_\h,z) \eqdefa u^\theta_\e(t,\r(x_\h)) \nabla_\h^\perp \r(x_\h)}\, ,
\eeq
and~\eqref{defp1eps} becomes
\begin{equation}\label{new defp1eps}
\wt P_{0,\rm int,\e}  {(t,\rho,z)} \eqdefa  -\int^\rho_0 u_\e^\theta(t,\s)\,d\s \, .
\end{equation}

Then, in agreement with\refeq  {flat_BL2},\refeq {curved_BL4}  and\refeq {defin_BLtilde}, we define 
  \beq
\label {flat_BL2_cut-off}
\begin {aligned}
\wt U_{0,{\rm BL}}^\top (t,\rho,z) &\eqdefa
-u_\e^\theta (t,\rho)  \chi \biggl( -\frac z {\e^{1-a}}\biggr)   e^{\frac z {\sqrt E}} \begin{pmatrix}
\ds -\sin\biggl( \frac z {\sqrt E}\biggr) \\ \ds \cos\biggl( \frac z {\sqrt E}\biggr) \\ 0
\end{pmatrix} \andf\\
\wt U_{0,{\rm BL}}^\bot (t,\rho,z) &\eqdefa -u_\e^\theta (t,\rho)   \chi \biggl( \frac {z+\f} {\e^{1-a}}\biggr)  e^{- \frac {z+\f } {\d \sqrt E} }  \begin{pmatrix}
\ds \d^{-\frac 23} \sin \biggl ( \frac {z+\f } {\d \sqrt E} \biggr)\\
\ds \cos\biggl ( \frac {z+\f  } {\d \sqrt E} \biggr)\\
\ds -\f '\d^{-\frac 23} \sin \biggl ( \frac {z+\f } {\d \sqrt E} \biggr)
\end{pmatrix}.
\end{aligned}
\eeq
 and the corresponding profiles
 \beq
\label {flat_BL2_cut-off_0}
\begin{aligned}
\wt U_0^\top (t,\rho,\zeta_1,\zeta_2) &\eqdefa -u_\e^\theta (t,\rho)   \chi (\zeta_2)e^{\zeta_1}
\begin{pmatrix}
-\sin\zeta_1 \\ \cos\zeta_1 \\ 0
\end{pmatrix}\andf\\
 \wt U_0^\bot (t,\rho,\zeta_1,\zeta_2) &\eqdefa -u_\e^\theta (t,\rho)  \chi (\zeta_2)  e^{\zeta_1} 
 \begin{pmatrix}
-\d^{-\frac 23} \sin\zeta_1\\
\cos\zeta_1\\
\f '\d^{-\frac 23} \sin\zeta_1
\end{pmatrix},
\end{aligned}
\eeq
where we recall that the components of the vectors refer to the basis~$\big(\nabla_\h\rho,\nabla_\h^\perp\rho,e_\zrm\big)$. 
From now on, we denote by~$\zeta_1$ the boundary layer variable, and by~$\zeta_2$ the cut-off variable.
The main interest of introducing the cut-off $\chi$ is given by the following proposition.
\begin{prop}
\label {Dirichlet_shore}
{\sl
With the above notation, one has
$$
\bigl(\wt U_{0,\rmint,\e} +\wt U_{0,{\rm BL}}^\top+\wt U_{0,{\rm BL}}^\bot\bigr)_{|\partial\Om_\f} =0\, ,
$$
 {and the supports of~$\wt U_{0,{\rm BL}}^\top$ and~$\wt U_{0,{\rm BL}}^\bot$ are disjoint.}
}
\end {prop}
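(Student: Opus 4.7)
The plan is to reduce the proof to three separate verifications on the pieces of $\partial\Om_\f$ (the top $\{z=0\}$, the bottom $\{z=-\f(\rho)\}$, and the shore $\{\rho(x_\h)=\rho_0\}$) together with a simple support argument. The central fact driving everything is that the prefactor $1-\chi(\f(\rho)/\e^{1-a})$ in the definition \eqref{def:u^theta_e} of $u_\e^\theta$ forces
\[
\supp u_\e^\theta \subset \cO_\e = \bigl\{x_\h \,:\, \f(\rho(x_\h)) \geq 2\e^{1-a}\bigr\},
\]
since $\chi \equiv 1$ on a neighbourhood of $[0,1/2]$; and that $\chi \in \cD([0,1[)$, so $\chi(s) = 0$ for $s \geq 1$ while $\chi(0)=1$.

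For the top boundary, I would evaluate \eqref{flat_BL2_cut-off} at $z=0$: the cut-off contributes $\chi(0)=1$, the exponential contributes $1$, and the trigonometric vector reduces to $\bigl(0,1,0\bigr)$ in the frame $(\nabla_\h\rho,\nabla_\h^\perp\rho,e_\zrm)$. Hence $\wt U_{0,{\rm BL}}^\top\big|_{z=0} = -u_\e^\theta\,\nabla_\h^\perp\rho$, which cancels $\wt U_{0,\rmint,\e}\big|_{z=0}$ exactly. For the other layer at $z=0$, the cut-off argument $\f/\e^{1-a}$ is $\geq 2$ on $\supp u_\e^\theta$, so $\chi(\f/\e^{1-a})=0$ and $\wt U_{0,{\rm BL}}^\bot\big|_{z=0}\equiv 0$. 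The bottom boundary is handled symmetrically: at $z=-\f$, $\wt U_{0,{\rm BL}}^\bot$ produces $-u_\e^\theta\,\nabla_\h^\perp\rho$ (its vertical component vanishes thanks to the $\sin$ factor), while $\wt U_{0,{\rm BL}}^\top$ is killed by $\chi(\f/\e^{1-a})=0$. The shore case is automatic: at $\rho=\rho_0$ one has $\f(\rho_0)=0$, so $1-\chi(\f/\e^{1-a})=1-\chi(0)=0$, hence $u_\e^\theta=0$ there, and all three pieces vanish identically.

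For the disjointness of supports, I would observe that $\wt U_{0,{\rm BL}}^\top$ is supported in $\{-z/\e^{1-a}<1\} = \{z>-\e^{1-a}\}$, while $\wt U_{0,{\rm BL}}^\bot$ is supported in $\{(z+\f)/\e^{1-a}<1\}=\{z<-\f+\e^{1-a}\}$. A common point would therefore force $-\e^{1-a}<-\f+\e^{1-a}$, that is $\f(\rho(x_\h))<2\e^{1-a}$; however, both supports are included in $\supp u_\e^\theta \subset \cO_\e$, on which $\f\geq 2\e^{1-a}$, a contradiction. There is no real obstacle here: the proposition is essentially a bookkeeping check confirming that the cut-off $\chi$ and the thickening threshold $2\e^{1-a}$ have been chosen consistently. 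The only subtle point is that the factor $2$ in the definition of $\cO_\e$ is precisely what makes the two cut-off regions non-overlapping, and this is also what ensures the vanishing of the opposite-side boundary layer at each boundary.
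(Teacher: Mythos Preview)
Your proposal is correct and follows essentially the same approach as the paper: you verify the cancellation $(\wt U_{0,\rmint,\e}+\wt U_{0,{\rm BL}}^\top)_{|z=0}=(\wt U_{0,\rmint,\e}+\wt U_{0,{\rm BL}}^\bot)_{|z=-\f}=0$ by direct evaluation, use the support property $\supp u_\e^\theta\subset\cO_\e$ to kill the opposite-side boundary layer at each boundary, and deduce disjointness of the supports from the incompatibility of $\{z>-\e^{1-a}\}$ and $\{z<-\f+\e^{1-a}\}$ on $\cO_\e$. Your explicit check on the shore $\{\rho=\rho_0\}$ is a harmless addition not spelled out in the paper (it is subsumed in the top/bottom cases since $u_\e^\theta$ vanishes there).
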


\begin {proof}
By construction, we already know that  
$$
(\wt U_{0,\rmint,\e} +\wt U^\surf_{0,{\rm BL}} )_{|z=0} = (\wt U_{0,\rmint,\e} + \wt U^\bot_{0,{\rm BL}} )_{|z=-\f}=0\, .
$$
Let us check that~${\wt U^\surf_{0,{\rm BL}}{}}_{|z=-\f}= {\wt U^\bot_{0,{\rm BL}}{}}_{|z=0}=0$. As a matter of fact, we notice that,
for any~$\rho$ in~$\f^{-1} ([2\e^{1-a},H])$,  the following properties hold true:
\beq
\label {Dirichlet_shore_demoeq1}
\begin {aligned}
z\geq -\e^{1-a} & \Longrightarrow   z+\f(\rho) \geq \f(\rho) -\e^{1-a}\geq \e^{1-a} \andf \\
z+\f(\rho)\leq \e^{1-a} & \Longrightarrow   z\leq \e^{1-a} -\f(\rho)\leq -\e^{1-a}.
\end {aligned}
\eeq
These properties imply that the support of the two boundary layers~$\wt U_{0,{\rm BL}}^\surf$ and~$\wt U_{0,{\rm BL}}^\bot$ are disjoint if the support of~$u^\theta_\e$ is included in~$\R^+\times\cO_\e $. Indeed, we have
\beno
z\geq -\e^{1-a} & \Longrightarrow & \chi \biggl( \frac {z+\f(\rho)} {\e^{1-a}}\biggr)=0  \andf \\
z+\f(\rho)\leq \e^{1-a} & \Longrightarrow &   \chi \biggl( -\frac z {\e^{1-a}}\biggr) =0\,,
\eeno
thus concluding the proof of the proposition.
\end {proof}


\section {The divergence free condition at order~$0$ and boundary layer terms of order~$a$ and~$1$ }
\label {BL_order1}

As the support of the two boundary layers~$\wt U_{0,{\rm BL}}^\surf$ and~$\wt U_{0,{\rm BL}}^\bot$ are disjoint (see Proposition~\ref{Dirichlet_shore}),
these two terms do not interact with each other. However, as we shall see below, the introduction of the cut-off entails the appearance of new terms of order $\e^{-1+a}$
when computing the divergence of the boundary layers. In order to cancel out  those terms, we must ``correct'' again our Ansatz.
Thus, adopting the notation introduced before Proposition \ref{Dirichlet_shore},
we shall look for the approximate solution $U_{\app,\e}$ under the form \eqref{defin_new_Ansatz}, namely
$$
\wt U_{0,\rmint,\e}   + \wt U_{0,{\rm BL}}^\top +  \wt U_{0,{\rm BL}}^\bot + \e^a\wt  U^\top_{a, {\rm BL}}  +\e^a \wt U^\bot_{a, {\rm BL}} + \e \wt U_{1,\rmint,\e}  
+ \e\wt  U^\top_{1, {\rm BL}}  +\e \wt U^\bot_{1, {\rm BL}}+\ldots.
$$
Recall that the terms~$\wt U_{0,\rmint,\e}$, $\wt U_{0,{\rm BL}}^\top$ and $\wt U_{0,{\rm BL}}^\bot$ have already been computed  in~(\ref{newdefU0int2}) and~(\ref{flat_BL2_cut-off})
(modulo the expression of $u^\theta$, which will be determined later). We now look for the other terms
of the expansion, in order for the divergence to vanish.

\medskip
 Notice that the vector field~$U_{0,\rmint,\e}$ is divergence free.  So let us  check how far we are from canceling the divergence of~$U_{\app,\e}$ by computing  the   quantities~${\rm Div}^\top_\e$ and~${\rm Div}^\bot_\e$ respectively defined by
\beq
\label {BL_order1_eq1}
\begin {aligned}
({\rm Div}^\top_\e)_{{\rm BL}} &\eqdefa \dive \bigl (\wt U_{0,{\rm BL}}^\top + \e^a\wt  U^\top_{a, {\rm BL}}  
+ \e\wt  U^\top_{1, {\rm BL}}  \bigr)\andf \\
({\rm Div}^\bot_\e)_{{\rm BL}} & \eqdefa \dive \bigl ( \wt U_{0,{\rm BL}}^\bot   +\e^a \wt U^\bot_{a, {\rm BL}}   +\e \wt U^\bot_{1, {\rm BL}}\bigr).
\end {aligned}
\eeq
We point out that, throughout this section, all the computations will be performed in the set of variables $\big(t,x_\h,\z_1,\z_2\big)$
(or $\big(t,\rho(x_\h),\z_1,\z_2\big)$ when convenient), as introduced in the notation~\eqref {defin_BLtilde}.
  
\medbreak
We start by considering the divergence at the surface of the ocean. By definition \eqref{flat_BL2_cut-off_0}
of the boundary layers and using Formula\refeq {div_rotating_frame_coodinates} for the divergence, assuming that the~$\theta$-components of~$\widetilde U_{a}^{\top}$ and~$\widetilde U_{1}^{\top}$ are of the form
$$
\widetilde U_{a}^{\theta,\top}= \widetilde U_{a}^{\theta,\top}(t,\rho,\z_1,\z_2)\andf \widetilde U_{1}^{\theta,\top}= \widetilde U_{1}^{\theta,\top}(t,\rho,\z_1,\z_2)\, ,
$$
 we get
\begin{align*}
{\rm Div}^\top_\e & = \dive_\h \bigl (u^\theta_\e \nabla_\h \rho  (x_\h)\bigr)  \chi(\zeta_2)e^{\zeta_1} \sin \zeta_1 \\
&\qquad + \e^a \dive_\h \bigl ( \widetilde U_{a}^{\rad,\surf}  \nabla_\h \rho  (x_\h)\bigr)
+\e \dive_\h \bigl ( \widetilde U_{1}^{\rad,\surf}  \nabla_\h \rho  (x_\h)\bigr)
\\
&\qquad\qquad{}- \e^{-1+2a} \partial_{\zeta_2} \widetilde U_{a}^{\zrm,\surf}
-\e^{a} \partial_{\zeta_2}\widetilde U_1^{\zrm,\surf} + \e^{-1+a} \partial_{\zeta_1} \widetilde U_{a}^{\zrm,\surf}+ \frac  1 {\sqrt {2\b}} \partial_{\zeta_1} \widetilde U_{1}^{\zrm,\surf} .
\end{align*}
The terms~$  \partial_{\zeta_1} \widetilde U_{a}^{\zrm,\surf}$ and $  \partial_{\zeta_2} \widetilde U_{a}^{\zrm,\surf}$ must therefore be~$0$. As~$ \widetilde U_{a}^{\zrm,\surf}$ must tend
to~$0$ when~$\zeta_1$ tends to infinity and similarly when $\z_2$ tends to infinity, we find
\beq \label{Uazzero}
 \widetilde U_{a,\rm BL}^{\zrm,\surf}= 0 \, .
\eeq 
Thus the above formula reduces to
\begin{align*}
{\rm Div}^\top_\e  &=   \dive_\h \bigl (u^\theta_\e  \nabla_\h \rho  (x_\h)\bigr)  \chi(\zeta_2)e^{\zeta_1} \sin \zeta_1
+\e^a \dive_\h \bigl (\widetilde U_{a}^{\rad,\surf}  \nabla_\h \rho  (x_\h)\bigr) \\
&\qquad\qquad\qquad
+\e \dive_\h \bigl ( \widetilde U_{1}^{\rad,\surf} \nabla_\h \rho  (x_\h)\bigr)
-\e^{a} \partial_{\zeta_2}\widetilde U_1^{\zrm,\surf} 
+ \frac  1 {\sqrt {2\b}} \partial_{\zeta_1} \widetilde U_{1}^{\zrm,\surf} .
\end{align*}
In order to cancel the terms of order~$0$ and~$a$, we need
\beq
\label {BL_order1_eq2}
\begin {aligned} 
 \partial_{\zeta_1} \widetilde U_{1}^{\zrm,\surf}  & = -\sqrt {2\b} \dive_\h \bigl (u^\theta_\e  \nabla_\h \rho  (x_\h)\bigr)  \chi(\zeta_2)e^{\zeta_1} \sin \zeta_1\andf \\
 \dive_\h \bigl ( \widetilde U_{a}^{\rad,\surf}  \nabla_\h \rho  (x_\h)\bigr) &=  \partial_{\zeta_2}\widetilde U_1^{\zrm,\surf} .
\end {aligned}
\eeq
This gives
\beq
\label {BL_order1_eq3}
\begin {aligned} 
 \widetilde U_{1}^{\zrm,\surf}  & = \frac {\sqrt {2\b}} 2  \dive_\h \bigl (u^\theta_\e  \nabla_\h \rho  (x_\h)\bigr)  \chi(\zeta_2)e^{\zeta_1} (\cos\zeta_1-\sin \zeta_1)\andf \\
 \widetilde U_{a}^{\rad,\surf}  &=  \frac {\sqrt {2\b}} 2   u^\theta_\e  \chi'(\zeta_2)e^{\zeta_1} (\cos\zeta_1-\sin \zeta_1)\, . \end {aligned}
\eeq
With these choices, we finally have
\beq
\label {BL_order1_eq3a}
{\rm Div}^\top_\e =\e \dive_\h \bigl (\widetilde U_{1}^{\rad,\surf} \nabla_\h \rho  (x_\h)\bigr),
\eeq
that is,
$$({\rm Div}^\top_\e)_{{\rm BL}} =\e \dive_\h \bigl (\wt U_{1,{\rm BL}}^{\rad,\surf} \nabla_\h \rho  (x_\h)\bigr)\, .$$
 Note that the function $ \wt U_{1,{\rm BL}}^{\rad,\surf}$ still has to be fixed.

\medskip

The case of the boundary layer  at the curved bottom requires more care. 
Again, we assume that~$\widetilde U_{a}^{\theta,\bot}$ and~$\widetilde U_{a}^{\theta,\bot}$ take the form
$$
\widetilde U_{a}^{\theta,\bot}= \widetilde U_{a}^{\theta,\bot}(t,\rho,\z_1,\z_2) \andf \widetilde U_{a}^{\theta,\bot}= \widetilde U_{1}^{\theta,\bot}(t,\rho,\z_1,\z_2)\, .
$$
Using\refeq  {compute_div_BL} and\refeq {flat_BL2_cut-off_0}, we can compute
\begin{align*}
{\rm Div}^\bot_\e & = \dive_\h \bigl (\widetilde U_{0}^{\rad,\bot} \nabla_\h \rho  (x_\h)\bigr) -\frac {\d'} \d \zeta_1\partial_{\zeta_1} \widetilde U_{0}^{\rad,\bot}
+\e^a \Bigl(\dive_\h \bigl ( \widetilde U_{a}^{\rad,\bot}  \nabla_\h \rho  (x_\h)\bigr)-\frac {\d'} \d \zeta_1\partial_{\zeta_1}  \widetilde U_{a}^{\rad,\bot}\Bigr)\\
&\qquad{}\ 
+\e \Bigl(\dive_\h \bigl ( \widetilde U_{1}^{\rad,\bot}  \nabla_\h \rho  (x_\h)\bigr) -\frac {\d'} \d \zeta_1\partial_{\zeta_1}  \widetilde U_{1}^{\rad,\bot}\Bigr)
+ \e^{-1+a} \partial_{\zeta_2} \bigl( \widetilde U_{0}^{\zrm,\bot} +\f' \widetilde U_{0}^{\rad,\bot}\bigr)\\
&\qquad\qquad{}
+\e^{-1+2a} \partial_{\zeta_2} \bigl( \widetilde U_{a}^{\zrm,\bot} +\f' \widetilde U_{a}^{\rad,\bot}\bigr)
+ \e^{a} \partial_{\zeta_2} \bigl( \widetilde U_{1}^{\zrm,\bot} +\f' \widetilde U_{1}^{\rad,\bot}\bigr)\\
&\qquad\qquad\qquad{}- \frac {1} {\d \sqrt E} \partial_{\zeta_1} \bigl( \widetilde U_{0}^{\zrm,\bot} +\f' \widetilde U_{0}^{\rad,\bot}\bigr)
-  \frac {\e^{a}} {\d \sqrt E} \partial_{\zeta_1} \bigl( \widetilde U_{a}^{\zrm,\bot} +\f'  \widetilde U_{a}^{\rad,\bot}\bigr)\\
&\qquad\qquad\qquad\qquad\qquad\qquad\qquad\qquad\qquad\qquad\qquad{}
-\frac {\e} {\d \sqrt E}\partial_{\zeta_1} \bigl( \widetilde U_{1}^{\zrm,\bot} +\f'  \widetilde U_{1}^{\rad,\bot}\bigr)\, .
\end{align*}
We know from\refeq {curved_BL1} that $ \widetilde U_{0}^{\zrm,\bot} +\f' \widetilde U_{0}^{\rad,\bot}=0$. On the other hand, the terms of size~$\e^{-1+2a}$
and of size~$\e^{-1+a}$ must vanish, which imposes that the condition
\begin{equation} \label{eq:U_a-BL}
 \widetilde U_{a}^{\zrm,\bot} +\f'\widetilde U_{a}^{\rad,\bot}=0
\end{equation}
must hold true as well. This in turn yields the equality
\begin{align*}
{\rm Div}^\bot_\e & = \dive_\h \bigl ( \widetilde U_{0}^{\rad,\bot} \nabla_\h \rho  (x_\h)\bigr) -\frac {\d'} \d \zeta_1\partial_{\zeta_1} \widetilde U_{0}^{\rad,\bot}
+\e^a \Bigl(\dive_\h \bigl (\widetilde U_{a}^{\rad,\bot}\nabla_\h \rho  (x_\h)\bigr) -\frac {\d'} \d \zeta_1\partial_{\zeta_1} \widetilde U_{a}^{\rad,\bot}\Bigr)\\
&{}\qquad
+\e \Bigl( \dive_\h \bigl( \widetilde U_{1}^{\rad,\bot} \nabla_\h \rho  \bigr) -\frac {\d'} \d \zeta_1\partial_{\zeta_1} \widetilde U_{1}^{\rad,\bot}\Bigr)
+ \e^{a} \partial_{\zeta_2} \bigl( \widetilde U_{1}^{\zrm,\bot} +\f' \widetilde U_{1}^{\rad,\bot}\bigr)\\
&\qquad\qquad\qquad\qquad\qquad\qquad\qquad\qquad\qquad\qquad{}
-\frac {1} {\d \sqrt {2\b}}\partial_{\zeta_1} \bigl( \widetilde U_{1}^{\zrm,\bot} +\f' \widetilde U_{1}^{\rad,\bot}\bigr)\, .
\end{align*}
The fact that  the terms of size~$0$ and~$\e^a$ must vanish gives
\beq
\label {BL_order1_eq4}
\begin {aligned}
\partial_{\zeta_1} \bigl(\widetilde U_{1}^{\zrm,\bot} +\f' \widetilde U_{1}^{\rad,\bot}\bigr) & = 
\d\sqrt {2\b} \Bigl(\dive_\h \bigl ( \widetilde U_{0}^{\rad,\bot} \nabla_\h \rho  (x_\h)\bigr) -\frac {\d'} \d \zeta_1\partial_{\zeta_1}  \widetilde U_{0}^{\rad,\bot} \Bigr)\andf\\
\partial_{\zeta_2} \bigl( \widetilde U_{1}^{\zrm,\bot} +\f' \widetilde U_{1}^{\rad,\bot}\bigr) &= 
- \dive_\h \bigl ( \widetilde U_{a}^{\rad,\bot}\nabla_\h \rho  (x_\h)\bigr) + \frac {\d'} \d \zeta_1\partial_{\zeta_1} \widetilde U_{a}^{\rad,\bot}\, .
\end {aligned}
\eeq
Observe that the components of $ \widetilde U_{0}^{\bot}$ are already known  {thanks to~(\ref{flat_BL2_cut-off_0})}. Our goal consists in solving the previous equations in order to find the precise expressions
of $ \widetilde U_{a}^{\bot}$ and $\widetilde U_{1}^{\bot}$.

Let us focus on the first equation appearing in \eqref{BL_order1_eq4}. The point is to write the term
$$
\dive_\h \bigl ( \widetilde U_{0}^{\rad,\bot} \nabla_\h \rho  (x_\h)\bigr) -\frac {\d'} \d \zeta_1\partial_{\zeta_1} \widetilde U_{0}^{\rad,\bot}
$$
as the sum of a function of~$\zeta_1$ and~$\zeta_2$, times the horizontal divergence of a vector field  of the type $g(\rho) \nabla_\h \rho  $,
plus the derivative of a function of~$\zeta$ which vanishes both at $-\infty$  and at~$0$. 
Let us  observe that, as~$ \zeta_1 \partial_{\zeta _1} f(\zeta_1)= \partial_{\zeta_1} (\zeta_1 f(\zeta_1))-f(\zeta_1)$, the first equation
of\refeq   {BL_order1_eq4} becomes
\beq
\label {curved_BL5}
\partial _{\zeta _1} \bigl( \widetilde U_{1}^{\zrm,\bot} +\f ' \widetilde U_{1}^{\rho,\bot} \bigr) = \sqrt{2\b}
\left(\d   \dive_\h \bigl ( \widetilde U_{0}^{\rad,\bot} \nabla_\h \rho  (x_\h)\bigr) +   \d' \widetilde U_{0}^{\rho,\bot}  
- \d'  \partial_{\zeta_1}   \bigl( \zeta_1 \widetilde U_{0}^{\rho,\bot} \bigr) \right)\, .
\eeq
Remark that, as~$|\nabla_\h \rho  (x_\h)|^2=1$, we can write
\[
\d'\widetilde U_{0}^{\rho,\bot}  = \d' \widetilde U_{0}^{\rho,\bot}  |\nabla_\h \rho  (x_\h)|^2 = 
 \widetilde U_{0}^{\rho,\bot} \nabla_\h\rho\cdot\nabla_\h(\d(\rho))\, .
\]
{Plugging this expression into\refeq {curved_BL5} and using \eqref{flat_BL2_cut-off_0} yield}
\begin{align*}
\partial _{\zeta _1} \bigl( \widetilde U_{1}^{\zrm,\bot} +\f '\widetilde U_{1}^{\rho,\bot} \bigr) &= 
\sqrt {2\b} \chi (\zeta_2)  e^{\zeta_1} \dive_\h \bigl(\d^{\frac 13} u^\theta_\e (t,\rho) \nabla_\h \rho  \bigr) 
 \sin\zeta_1 \\
&\qquad\qquad\qquad\qquad\qquad\qquad{}
- \sqrt {2\b} \frac {\d' } {\d^{\frac 2 3}}  u^\theta_\e(t,\rho) \chi(\zeta_2)\partial _{\zeta _1} \bigl(\zeta_1 e^{\zeta_1} \sin \zeta_1\bigr).
\end{align*}
By integration and because the functions must vanish at~{$\zeta_1=-\infty$}, we finally infer that 
\begin{align*}
 \widetilde U_{1}^{\zrm,\bot} +\f '\widetilde U_{1}^{\rho,\bot} & =  - \frac{\sqrt {2\b}}{2}   \dive_\h \bigl(\d^{\frac 13} u^\theta_\e (t,\rho) \nabla_\h \rho  \bigr)\chi(\zeta_2)e^{\zeta_1} (\cos\zeta_1 -\sin\zeta_1) \\
&\qquad\qquad\qquad\qquad\qquad\qquad{}
- \sqrt {2\b} \frac {\d' } {\d^{\frac 2 3}}  u^\theta_\e(t,\rho) \chi(\zeta_2)\zeta_1 e^{\zeta_1} \sin \zeta_1\, .
\end{align*}
In order to solve the second equation  of\refeq {BL_order1_eq4},  
we start by computing
\begin{align*}
\partial_{\z_2}\big( \widetilde U_{1}^{\zrm,\bot} +\f '\widetilde U_{1}^{\rho,\bot}\big) &=
 -\frac{\sqrt {2\b}}{2}  \dive_\h \bigl(\d^{\frac 13} u^\theta_\e (t,\rho) \nabla_\h \rho  \bigr)\chi'(\zeta_2) e^{\zeta_1} (\cos\zeta_1 -\sin\zeta_1) \\
&\qquad\qquad\qquad\qquad\qquad\qquad{}
- \sqrt {2\b} \frac {\d' } {\d^{\frac 2 3}}  u^\theta_\e(t,\rho) \chi'(\zeta_2)\zeta_1 e^{\zeta_1} \sin \zeta_1\,.
\end{align*}
Similarly as above, we can write the right-hand side as
\begin{align*}
&- \dive_\h \bigl ( \widetilde U_{a}^{\rad,\bot}\nabla_\h \rho  (x_\h)\bigr) + \frac {\d'} \d \zeta_1\partial_{\zeta_1} \widetilde U_{a}^{\rad,\bot} \\
&\qquad =
- \left(\dive_\h \bigl ( \widetilde U_{a}^{\rad,\bot}\nabla_\h \rho  (x_\h)\bigr) + \frac {\d'} \d \widetilde U_{a}^{\rad,\bot} \right)
+ \frac{\d'}{\d} \partial_{\z_1}\big(\zeta_1 \widetilde U_{a}^{\rad,\bot}\big) \\
&\qquad =
- \frac{1}{\d} \left(\d \dive_\h \bigl (\widetilde U_{a}^{\rad,\bot}\nabla_\h \rho  (x_\h)\bigr) + \d' \widetilde U_{a}^{\rad,\bot} \right)
+ \frac{\d'}{\d} \partial_{\z_1}\big(\zeta_1 \widetilde U_{a}^{\rad,\bot}\big) \\
&\qquad =
- \frac{1}{\d} \dive_\h \bigl (\d \widetilde U_{a}^{\rad,\bot}\nabla_\h \rho  (x_\h)\bigr)
+ \frac{\d'}{\d} \partial_{\z_1}\big(\zeta_1 \widetilde U_{a}^{\rad,\bot}\big)\,.
\end{align*}
Using those relations, we can write the second equation in \eqref{BL_order1_eq4} in the following way:
\begin{align*}
&-\dive_\h \bigl (\d \widetilde U_{a}^{\rad,\bot}\nabla_\h \rho  (x_\h)\bigr)
+\d'\partial_{\z_1}\big(\zeta_1 \widetilde U_{a}^{\rad,\bot}\big) \\
&\qquad\qquad\qquad\qquad = -\frac{\sqrt {2\b}}{2}\ \d \  \dive_\h \bigl(\d^{\frac 13} u^\theta_\e (t,\rho) \nabla_\h \rho  \bigr)\chi'(\zeta_2) e^{\zeta_1} (\cos\zeta_1 -\sin\zeta_1) \\
&\qquad\qquad\qquad\qquad\qquad\qquad\qquad\qquad\qquad\qquad{}
- \sqrt {2\b} \ \d^{\frac13}\ \d'  u^\theta_\e(t,\rho) \chi'(\zeta_2)\zeta_1 e^{\zeta_1} \sin \zeta_1 \\
&\qquad\qquad\qquad\qquad = -\frac{\sqrt {2\b}}{2} \dive_\h \bigl(\d^{\frac 43} u^\theta_\e (t,\rho) \nabla_\h \rho  \bigr)\chi'(\zeta_2) e^{\zeta_1} (\cos\zeta_1 -\sin\zeta_1) \\
&\qquad\qquad\qquad\qquad\qquad\qquad\qquad\qquad\qquad\qquad{}
- \frac{\sqrt {2\b}}{2} \ \d^{\frac13}\ \d'  u^\theta_\e(t,\rho) \chi'(\zeta_2)\zeta_1 e^{\zeta_1} \sin \zeta_1\,.
\end{align*}
Therefore    we set
$$
\widetilde U_{a}^{\rad,\bot} = \frac{\sqrt {2\b}}{2} \d^{\frac 1 3} u^\theta_\e(t,\rho) \chi'(\zeta_2) e^{\zeta_1} (\cos\zeta_1 -\sin\zeta_1)\,.
$$
 
All in all, recalling also \eqref{eq:U_a-BL}, with the choice
\beq
\label {curved_BL6}
\begin {aligned}
\widetilde U_{a}^{\rad,\bot} & = -\frac {\sqrt {2\b} } 2 \d^{\frac 1 3} u^\theta_\e \chi'(\zeta_2) e^{\zeta_1} (\cos\zeta_1 -\sin\zeta_1)\\
\widetilde U_{a}^{\zrm,\bot} & = \f' \frac {\sqrt {2\b} } 2 \d^{\frac 1 3} u^\theta_\e \chi'(\zeta_2) e^{\zeta_1} (\cos\zeta_1 -\sin\zeta_1)\andf\\
\widetilde U_{1}^{\zrm,\bot} +\f '\widetilde U_{1}^{\rho,\bot} &=  - \frac {\sqrt {2\b}} 2   \dive_\h \bigl(\d^{\frac 13} u^\theta_\e (t,\rho) \nabla_\h \rho  \bigr)\chi(\zeta_2)e^{\zeta_1} (\cos\zeta_1 -\sin\zeta_1) \\
&\qquad\qquad\qquad\qquad\qquad\qquad\qquad {}- \sqrt {2\b} \frac {\d' } {\d^{\frac 2 3}}  u^\theta_\e \chi(\zeta_2)\zeta_1 e^{\zeta_1} \sin \zeta_1\,,
\end  {aligned}
\eeq
we infer that 
\beq
\label {curved_BL7}
{\rm Div}^\bot_\e = 
\e \Bigl( \dive_\h \bigl( \widetilde U_{1}^{\rad,\bot} \nabla_\h \rho  \bigr) -\frac {\d'} \d \z_1 \partial_{\zeta_1} \widetilde U_{1}^{\rad,\bot}\Bigr)\,.
\eeq
Observe that the function $ \widetilde U_{1}^{\rad,\bot}$ still has to be found  {(as well as other correctors to ensure that the divergence is exactly zero)}.

\medskip 

To conclude this part, we   set
\begin{equation} \label{eq:U_a-1^theta}
\begin{aligned}
\widetilde U_{a }^{\theta,\top}  = \widetilde U_{a }^{\theta,\bot} = \widetilde U_{1 }^{\theta,\top} =  \widetilde U_{1 }^{\theta,\bot} = 0\,,
\end{aligned}
\end{equation}
so that, finally,
\begin{align*}
\widetilde U_{a }^{\top} &=\begin{pmatrix}
\displaystyle \frac {\sqrt {2\b}} 2   u^\theta_\e  \chi'(\zeta_2)e^{\zeta_1} (\cos\zeta_1-\sin \zeta_1) \\ 0\\ 0
\end{pmatrix}  
\andf \\
\widetilde U_{a }^{\bot} &=\begin{pmatrix}
\displaystyle -\frac {\sqrt {2\b} } 2 \d^{\frac 1 3} u^\theta_\e \chi'(\zeta_2) e^{\zeta_1} (\cos\zeta_1 -\sin\zeta_1) \\ 0\\\displaystyle  \f' \frac {\sqrt {2\b} } 2 \d^{\frac 1 3} u^\theta_\e \chi'(\zeta_2) e^{\zeta_1} (\cos\zeta_1 -\sin\zeta_1)
\end{pmatrix}
\end{align*}
and, as for~$\widetilde U_{1 }^{\bot} $ and~$\widetilde U_{1 }^{\bot} $, the~$\theta$ components vanish, $ \widetilde U_{1}^{\zrm,\surf}$ is given by~(\ref{BL_order1_eq3}), and~$ \widetilde U_{1}^{\zrm,\bot}$ and~$ \widetilde U_{1}^{\rho,\bot}$
   are related by~(\ref{curved_BL6}). 
Returning to the original variables,
   \begin{equation} \label{eq:def Uasurfbot}
   \begin{aligned}
\widetilde U_{a ,\rm BL}^{\top} &=  u^\theta_\e \begin{pmatrix}
\displaystyle \frac {\sqrt {2\b}} 2  \chi'  \biggl( -\frac z {\e^{1-a}}\biggr)e^{\frac z {\sqrt E}} (\cos\frac z {\sqrt E}-\sin \frac z {\sqrt E}) \\ 0\\ 0
\end{pmatrix}   \quad \mbox{and}
 \\\widetilde U_{a  ,\rm BL}^{\bot} &= u^\theta_\e\begin{pmatrix}
\displaystyle -\frac {\sqrt {2\b} } 2 \d^{\frac 1 3} \chi' \biggl( \frac {z+\f} {\e^{1-a}}\biggr) e^{- \frac {z+\f } {\d \sqrt E} } \Big(\cos\big( \frac {z+\f } {\d \sqrt E} \big) +\sin\big( \frac {z+\f } {\d \sqrt E}\big) \Big) \\ 0\\\displaystyle  \f' \frac {\sqrt {2\b} } 2 \d^{\frac 1 3} \chi'\biggl( \frac {z+\f} {\e^{1-a}}\biggr) e^{- \frac {z+\f } {\d \sqrt E} } \Big(\cos\big( \frac {z+\f } {\d \sqrt E}\big)  +\sin\big( \frac {z+\f } {\d \sqrt E} \big)\Big)
\end{pmatrix}\, .
\end{aligned}
\end{equation}

\begin{remark} \label{r:exp-small}
{\sl It follows from the definitions given in \eqref{curved_BL6}  that the terms~$\wt U^\surf_{a,{\rm BL}}$ and~$\wt U^\bot_{a,{\rm BL}}$
satisfy the Dirichlet boundary condition, owing to Proposition\refer {Dirichlet_shore}. Moreover, these terms are exponentially small as well as all their derivatives,
as
\beq
\label {exp_small_BL_a}
e^{\frac z  {\sqrt E} }\biggl | \chi'\biggl (-\frac z {\e^{1-a}}\biggr)\biggr |\leq C e^{-\frac {\sqrt {2\b}} {2}  \e^{-a}}\andf
e^{-\frac {z+\f}  {\d\sqrt E} }\biggl |  \chi'\biggl (\frac {z+\f}  {\e^{1-a}}\biggr)\biggr | \leq C e^{-\frac {\sqrt {2\b}} {2(1+\|\d\|_{L^\infty})}  \e^{-a}}.
\eeq
}
\end{remark}


\section {The Dirichlet boundary condition at order~$1$ and interior terms of order~$1$ }
\label{s:Dir-1}

 The previous section led the divergence to be small thanks to new correctors, but now
  the Dirichlet boundary condition is no longer satisfied. Indeed, Relations \eqref{BL_order1_eq3} and\refeq {curved_BL6} applied on the boundary give
\beq
\label {curved_BL6b}
\begin{aligned}
&  \qquad\qquad\quad  \wt U_{1,\rm BL}^{\zrm ,\top} (t, \rho, 0) 
 = \frac{ \sqrt {2\b}}{2} \dive_\h \bigl (u^\theta_\e(t,\rho)  \nabla_\h \rho \bigr)    \andf \\
& \wt U_{1,\rm BL} ^{\zrm,\bot}(t,\rho,-\f(\rho)) +\f ' \wt U_{1,\rm BL}^{\rho ,\bot} (t,\rho,-\f(\rho)) 
=  - \frac {\sqrt {2\b}} 2  \dive_\h \bigl(\d^{\frac 13} u^\theta_\e (t,\rho) \nabla_\h \rho  \bigr).
\end{aligned}
\eeq
The boundary condition at order~$1$ will be ensured by the introduction of terms of order~$1$ at the interior, namely~$\e \wt U_{1,\rmint,\e}  $ and $\e \wt  P_{1,\rmint,\e}$,
which we are now going to compute.

At first glance, it seems natural to look for~$\wt  U_{1,\rmint,\e}$ as a function of~$(t,\rho,z)$. Of course, the vector field~$\wt  U_{1,\rmint,\e}$  {must be} divergence free, which imposes, because of Formula\refeq  {div_rotating_frame_coodinates},
 \beno
 \dive \wt U_{1,\rmint,\e} & = & \dive_\h  \wt U^\h_{1,\rmint,\e} +\partial_z \wt U^\zrm_{1,\rmint,\e}\\
 & = & \partial_\rho \wt U^\h_{1,\rmint,\e} + \wt U^\h_{1,\rmint,\e}\D_\h\rho  +
\partial_z \wt U^\zrm_{1,\rmint,\e}.
 \eeno
{However}, the remark after Equation\refeq {deriv_nabla_rho}  points out the fact that, except in the  particular radial case when~$\rho(x_\h)=|x_\h|$, the laplacian of~$\rho$ is never a function of~$\rho$. Thus, we  {look} for the vector field~$\wt U_{1,\rmint,\e}$ of the form
\[
 \wt U_{1,\rmint,\e} (t,x_\h,z) = \begin{pmatrix}    \wt U_{1,\rmint,\e}^\rho(t,\rho(x_\h),z) \\   \wt U_{1,\rmint,\e}^\theta(t,\rho(x_\h),z) \\    \wt U_{1,\rmint,\e}^\zrm(t,x_\h,z) \end{pmatrix}
\]
in the basis $\big(\nabla_\h\rho,\nabla_\h^\perp\rho,e_\zrm\big)$, that is, only the ``radial'' and ``azimuthal'' components, $ \wt U_{1,\rmint,\e}^\rho$ and~$ \wt U_{1,\rmint,\e}^\theta$ respectively,
are assumed to depend on $\rho$, whereas the vertical component $ \wt U_{1,\rmint,\e}^\zrm$ may depend on~$x_\h$ in a free way.
 {Imposing the divergence free constraint over $\wt U_{1,\rmint,\e}$ gives the relation}
\begin{equation} \label{eq:int-div-U_1}
\divh\big(  \wt U_{1,\rmint,\e}^\rho \nabla_\h\rho\big) + \partial_\zrm  \wt U_{1,\rmint,\e}^\zrm = 0\,. 
\end{equation}
On the other hand, we notice that the gradient of $\wt P_{1,\rmint,\e}$ writes as $$\nabla \wt P_{1,\rmint,\e}= \partial_\rho \wt P_{1,\rmint,\e} \nabla_\h\rho + \partial_\zrm\wt P_{1,\rmint,\e} e_\zrm\, .$$
We deduce that the terms of order $0$ in the interior must satisfy the equations
\begin{equation} \label{eq:int-P_1-U_1-rho}
\partial_\rho \wt P_{1,\rmint,\e} -  \wt U_{1,\rmint,\e}^\theta = 0
\end{equation}
on the $\nabla_\h\rho$ component (where we have used that $\big(\nabla_\h^\perp\rho\big)^\perp = - \nabla_\h\rho$) and
\begin{equation} \label{eq:int-U_1-theta}
\partial_tu^\theta_\e +  \wt U_{1,\rmint,\e}^\rho = 0
\end{equation}
on the $\nabla_\h^\perp\rho$ component. We see that those two equations are independent one from the other. 
At the same time, the $\zrm$ component of the system reduces to the equation $\partial_\zrm\wt P_{1,\rmint,\e}=0$, thus~$\wt P_{1,\rmint,\e}=\wt P_{1,\rmint,\e}(t,\rho)$.
Therefore, in \eqref{eq:int-P_1-U_1-rho}
we make the simple choice $ \wt U_{1,\rmint,\e}^\theta = 0$, that is
\beq \label{P1intzero}
\wt P_{1,\rmint,\e} = 0\, .
\eeq
In Equation \eqref{eq:int-U_1-theta}, instead, we use the fact that $u^\theta_\e= u^\theta_\e(t,\rho)$, see \eqref{def:u^theta_e},
to deduce that~$\partial_\zrm  \wt U_{1,\rmint,\e}^\rho = 0$, hence $ \wt U_{1,\rmint,\e}^\rho =  \wt U_{1,\rmint,\e}^\rho (t,\rho)$.
Since this function must be equal to $ {-{ \wt U}^{\rho,\surf}_{1,\rm BL}}_{|z=0}$
 at the surface and to~${ -{ \wt U}_{1,\rm BL}^{\rad,\bot}}_{|z=-\f}$ at the bottom
in order to enforce the Dirichlet boundary condition,
in turn we get the fundamental equality
\[   {\ { \wt U}^{\rho,\surf}_{1,\rm BL}}_{|z=0}= { \ { \wt U}_{1,\rm BL}^{\rad,\bot}}_{|z=-\f}
 \,.
\]
Using also Relation \eqref{eq:int-div-U_1} and the first equality in \eqref{curved_BL6b}, we finally infer the form of $ \wt  U_{1,\rmint,\e} $, namely
\beq
\label  {curved_BL8}
\begin{aligned}
 \wt  U_{1,\rmint,\e} (t,x_\h,z)&= \begin{pmatrix} -     \wt  U_{1,\rm BL}^{\rad,\surf}(t,\rho,0) \\ 0\\ - \wt   U_{1,\rm BL}^{\zrm,\surf}(t,\rho,0) 
+z\dive_\h \bigl (  \wt  U_{1,\rm BL}^{\rad,\surf}(t,\rho,0) \nabla_\h \rho \bigr) \end{pmatrix}  \\
&= \begin{pmatrix} -  \wt  U_{1,\rm BL}^{\rad,\bot}(t,\rho,-\f)\\ 0\\ \ds -\frac{\sqrt {2\b}}{2}  \dive_\h \bigl (u^\theta_\e  \nabla_\h \rho\bigr) +
z\dive_\h \bigl (   \wt U_{1,\rm BL}^{\rad,\surf}(t,\rho,0) \nabla_\h \rho \bigr)\end{pmatrix}\, ,
\end{aligned}
\eeq
where we recall that the expression is given in the basis $\big(\nabla_\h\rho,\nabla_\h^\perp\rho,e_\zrm\big)$ of $\R^3$.
Observe that, obviously, we have
$$
{\wt U}^\rho_{1,\rmint}(t,\rho,0)= {\wt U}^{\rho}_{1,\rmint}(t,\rho,-\f(\rho))= -{\ {\wt U}_{1,\rm BL}^{\rad,\surf}}_{|z=0}=- {\ {\wt U}_{1,\rm BL}^{\rad,\bot}}_{|z=-\f}\,,
$$
and also~$\wt U^{\zrm}_{1,\rmint}(t,\rho,0)=- \wt U_{1}^{\zrm,\surf}(t,\rho,0)$. To ensure the full Dirichlet boundary conditions, 
we must have $\wt U^{\zrm}_{1,\rmint}(t,\rho,-\phi) = -\wt  U_{1}^{\zrm,\bot}(t,\rho,0)$, which gives
\beq
\label  {curved_BL7a}
\begin {aligned}
\wt U_{1}^{\zrm,\bot}(t,\rho,0) & = \frac{\sqrt {2\b}}{2}  \dive_\h \bigl (u^\theta_\e  \nabla_\h \rho\bigr)   +
\f(\rho)  \dive_\h \bigl (\wt  U_{1}^{\rad,\surf}(t,\rho,0) \nabla_\h \rho \bigr)\, .
\end {aligned}
\eeq
Plugging this expression in the third equation of\refeq  {curved_BL6} yields
$$
\f(\rho)\  \dive_\h \bigl( {\wt U}_{1}^{\rad,\bot}(t,\rho,0) \nabla_\h \rho  \bigr) + \f ' {\wt U}_{1}^{\rho ,\bot} (t,\rho,0) =
-\frac{\sqrt {2\b}}{2}  \dive_\h \bigl( (1+\d^{\frac 13}) u^\theta_\e(t,\rho) \nabla_\h\rho  \bigr)\, .
$$
Arguing similarly as we did after Equation \eqref{curved_BL5}, we see that the left-hand side of the previous relation can be written as a total
divergence:
\[
 \f(\rho)\  \dive_\h \bigl(   {\wt U}_{1}^{\rad,\bot}(t,\rho,0) \nabla_\h \rho  \bigr) + \f '  {\wt U}_{1}^{\rho ,\bot} (t,\rho,0) =
 \divh\big(\phi(\rho) \  {\wt U}_{1}^{\rho,\bot}(t,\rho,0)\ \nabla_\h\rho\big).
\]
This implies that 
\beq
\label  {curved_BL9}
 {\wt U}_{1}^{\rad,\surf}(t,\rho,0)=  {\wt U}_{1}^{\rad,\bot}(t,\rho,0) = 
 {- \lambda_\f(\rho)\, u^\theta_\e(t,\rho)\,,}
\eeq
where the function $\lambda_\f$ has been defined in \eqref{form of the limit}. We recall its definition here:
\begin{equation}\label{form of the limit-bis}
  \lambda_\f(r)    \eqdefa    \frac {\sqrt {2\b}}   {\f(r)} \,\frac {1+\sqrt[4]{1+\f '^2(r)}  } {2} \,\cdotp
\end{equation}
In the end, using also \eqref{curved_BL8}, we see that the vector field $ \wt  U_{1,\rmint,\e} $ is given,
in the coordinate framework $\big(\nabla_\h\rho, \nabla_\h^\perp\rho,e_\zrm\big)$, by the formula
\beq
\label {curved_BL10}
 \wt  U_{1,\rmint,\e}  (t,x_\h,z)=\begin{pmatrix} 
 {\lambda_\f(\rho)} u^\theta_\e (t,\rho)    \\ 0\\ \ds -\frac{\sqrt {2\b}}{2}  \dive_\h \bigl (u^\theta_\e(t,\rho)  \nabla_\h \rho \bigr) -z\dive_\h \biggl(
{\lambda_\f(\rho)} u^\theta_\e (t,\rho) \nabla_\h \rho    \biggr) \end{pmatrix} .
\eeq
This point is very important, because it allows us to determine the function~$u^\theta$, hence $u^\theta_\e$.
Indeed, inserting the expression for the ``radial'' component (\tsl{i.e.} the component along $\nabla_\h\rho$) into Equation \eqref{eq:int-U_1-theta}
and recalling the definition of $u^\theta_\e$ from \eqref{def:u^theta_e}, we find
\beq 
\label  {curved_BL12}
\partial_t u^\theta + 
{\lambda_\f(\rho)} u^\theta =0\,,
\quad\hbox{which gives}\quad 
u^\theta (t,\rho) =  e^{-t {\lambda_\f(\rho)} }\ u_{0}^\theta(\rho)\,,
\eeq
in agreement with Formula \eqref{form of the limit-bis}.

\medbreak
Before going further, let us sum up the formulas which we have found for the terms (both boundary layer and interior terms) of order $1$. In doing so, we shall also introduce
a suitable cut-off function, whose importance will clearly appear in the next section.

Let us consider a function~$k$ in~$\cD(]-2,0])$ such that
\beq
\label {cond_BL_withoutR}
k(\z)=1\quad\hbox{on $[-1,0]$} \quad \andf \quad  \int_{\R^-} k(\zeta) d\zeta=\int_{\R^-} \zeta k'(\zeta) d\zeta
=0.
\eeq
Then, we define the following vector fields, expressed in the system of coordinates related to the basis $\big(\nabla_\h\rho,\nabla_\h^\perp\rho,e_\zrm\big)$:
\beq 
\label  {curved_BL11}
\begin {aligned}
\widetilde U_{1,\rm BL}^{\top} 
& = \begin{pmatrix} \ds-{\lambda_\f(\rho)} u^\theta_\e   k\Bigl(\frac z {\sqrt E}\Bigr)\\ 0\\ \ds - \frac {\sqrt {2\b}} 2   \dive_\h \bigl (u^\theta_\e  \nabla_\h \rho\bigr) \chi\biggl( \frac {z+\f} {\e^{1-a}}\biggr)  e^{\frac z {\sqrt E}} \Bigl(\cos\frac z {\sqrt E} -\sin\frac z {\sqrt E} \Bigr) \biggr)\end{pmatrix}\, ,  \quad \mbox{and}
  \\
\widetilde U_{1,\rm BL}^{\bot} 
& = \begin{pmatrix} \ds-{\lambda_\f(\rho)} u^\theta_\e   k\Big(- \frac {z+\f } {\d \sqrt E} \Big)\\ 0\\ \ds - \frac {\sqrt {2\b}} 2   \dive_\h \bigl (\d^{\frac 13}u^\theta_\e  \nabla_\h \rho  \bigr) \chi\biggl( \frac {z+\f} {\e^{1-a}}\biggr) e^{- \frac {z+\f } {\d \sqrt E} } \biggl(\cos\Bigl( \frac {z+\f } {\d \sqrt E} \Bigr) +\sin\Big(\frac {z+\f } {\d \sqrt E} \Big)\bigg ) \\{}\ds\qquad\qquad + \mc U_{1}^{\bot} \Bigl(t,\rho, - \frac {z+\f } {\d \sqrt E}  , \frac {z+\f} {\e^{1-a}}\Bigr)
\end{pmatrix}
\end {aligned}
\eeq
with
\beq 
\label  {curved_BL111}
 {\mc U}_{1}^{\bot} (\zeta_1,\zeta_2) \eqdefa    u^\theta_\e     \biggl(\f' {\lambda_\f(\rho)} k(\zeta_1) -\frac {\sqrt {2\b}} 2  \frac {\d' } {\d^{\frac 2 3}}  \chi(\zeta_2)\zeta_1 e^{\zeta_1}  \sin\zeta_1 \biggr)\, .
\eeq
Now, let us define
\beq 
\label  {curved_BL14}
U_{\app,\e,1}  \eqdefa \sum_{j=0}^1 \e^j\bigl(\wt U_{j,\rmint,\e}+\wt U_{j,\rm BL}^{\surf}+\wt U_{j,\rm BL}^\bot\bigr)
+\e^a \bigl (\wt U_{a,\rm BL}^{\surf}+\wt U_{a,\rm BL}^\bot\bigr)\,,
\eeq
where~$\wt U_{0,\rmint,\e}=  \big(0,u^\theta_\e,0)$ with~$u^\theta_\e$  given by\refeq {curved_BL12},~$ \wt U_{1,\rmint,\e}$ is given by\refeq  {curved_BL10}, $\wt U_{0,\rm BL}^\surf$ and~$\wt U_{0,\rm BL}^\bot$ by\refeq {flat_BL2_cut-off},
$\wt U_{a,\rm BL}^{\surf}$  and~$\wt U_{a,\rm BL}^\bot$
 by~(\ref{eq:def Uasurfbot}), ~$\wt U_{1,\rm BL}^\surf$ by\refeq  {curved_BL11} and~$\wt U_{1,\rm BL}^\bot$ by~\refeq  {curved_BL11}-(\ref{curved_BL111}). Observe that, thanks to Proposition\refer {Dirichlet_shore}, we have
\beq 
\label  {curved_BL15}
{U_{\app,\e,1} }_{|\partial\Om_\f} =0.
\eeq
Let us also recall that, as already observed in Remark \ref{r:exp-small},  the two terms~$\wt U_{a,\rm BL}^{\surf}$ and~$\wt U_{a,\rm BL}^\bot$
are exponentially small as well as all their derivatives.


%
%

\section {The divergence term of order~$1$  and the boundary layer terms  of order~$2$}
\label{full approximate}

We remark that the vector field~$U_{\app,\e,1} $  defined in~(\ref{curved_BL14}) does not satisfy  the divergence free condition. 
Indeed, if we compute its divergence, using\refeq  {BL_order1_eq3a} and\refeq {curved_BL7} we get
\beno 
\dive U_{\app,\e,1}(u^\theta_{0,\e}) & =&  \e \bigl( \dive_\h \bigl( \widetilde U_{1}^{\rad,\surf} \nabla_\h \rho  \bigr)\bigr)_{\rm BL}+ \e 
\bigl( \dive_\h \bigl( \widetilde U_{1}^{\rad,\bot} \nabla_\h \rho  \bigr)\bigr)_{\rm BL}
-\e\frac {\d'}{\d} \bigl(\zeta\partial_\zeta \widetilde U_{1}^{\rad,\bot}\bigr)_{\rm BL}\\
& = &  -\e \divh\Bigl({\lambda_\f(\rho)} u^\theta_\e \nabla_\h\rho \Bigr)  k\biggl(\frac z {\sqrt E}\biggr)
-\e \divh\Bigl({\lambda_\f(\rho)} u^\theta_\e \nabla_\h\rho \Bigr)  k\biggl(-\frac {z+\f}  {\d\sqrt E}\biggr)
\\&&\qquad\qquad\qquad\qquad\qquad\qquad\qquad{}
+\e \frac {\d'}{\d} {\lambda_\f(\rho)} u^\theta_\e  \frac {z+\f}  {\d\sqrt E}  k'\biggl(-\frac {z+\f}  {\d\sqrt E}\biggr)\cdotp
\eeno

At this point, we introduce the   functions 
$$
K^\surf(\zeta) \eqdefa  \int_\zeta^0 k(\s)d\s\,,\quad  K_0^\bot \eqdefa K^\surf \ \andf \ K_1^\bot(\zeta)\eqdefa \int_\zeta^0
\s k'(\s) d\s\, , 
$$
where the function~$k$ is defined in\refeq {cond_BL_withoutR}. We then define  the two boundary layer terms
\beq 
\label  {curved_BL151}
\begin{aligned}
\wt U_{2}^\surf & \eqdefa  -\begin{pmatrix}
0\\0\\
 \ds\dive_\h\Bigl({\lambda_\f(\rho)} u^\theta_\e (t,\rho) \nabla_\h \rho  \Bigr)K^\surf(\zeta_1)
\end {pmatrix} \andf \\
\wt U_{2}^\bot &\eqdefa \begin{pmatrix}
0\\0\\
\ds\dive_\h\Bigl({\lambda_\f(\rho)} u^\theta_\e (t,\rho) \nabla_\h \rho  \Bigr)K_0^\bot(\zeta_1)-
\frac {\d'}{\d} {\lambda_\f(\rho)} u^\theta_\e(t,\rho) K_1^\bot(\zeta_1)
\end {pmatrix} .
\end {aligned}
\eeq

Notice that, because of Conditions\refeq {cond_BL_withoutR},~${{\wt U}_{2,\rm BL}^\surf}$ and~${{\wt U}_{2,\rm BL}^\bot}$ vanish on the boundary of~$\Om_\f$.
Therefore, we define the approximate solution $U_{\app,\e}$ as
\beq
\label {curved_BL16}
U_{\app,\e} \eqdefa  U_{\app,\e,1} +\e^2 \wt U_{2,\rm BL}^\surf+\e^2\wt U_{2,\rm BL}^\bot\,,
\eeq
with $U_{\app,\e,1}$ given by Formula \eqref{curved_BL14}.
Then, thanks to Relations\refeq   {curved_BL15} and\refeq  {curved_BL151}, we deduce that
\begin{equation} \label{eq:U-in-V_s}
 { U_{\app,\e} }_{|\partial\Om_\f} =0 \ \andf \  \dive U_{\app,\e}  =0\,.
\end{equation}
We also define
\beq
\label {Papp}
 {P_{\app,\e} \eqdefa \wt P_{0,\rm int, \e}  +  \e {\wt P}_{1,{\rm BL}}^{\bot}\,,}
\end{equation}
where, recalling~\eqref{new defp1eps},
$$
\wt P_{0,\rm int,\e}  {(t,\rho,z)}  \eqdefa -\int^\rho_0 u_\e^\theta(t,\s)\,d\s
$$
and, recalling~\eqref{P1bot}, 
$$
  {{\wt P}_{1,{\rm BL}}^{\bot} (\rho,z)}
= -\frac {\sqrt {2\b}}  2\frac {1+\f '^2} {\d^\frac53}  \f ' 
u_{  {\e}}^\theta (t,\rho)    e^{- \frac {z+\f } {\d \sqrt E} }  \Big(
 \sin \bigl ( \frac {z+\f } {\d \sqrt E} \bigr)+ \cos \bigl ( \frac {z+\f } {\d \sqrt E} \bigr)\bigr)\, .
$$

This concludes the construction of the approximate  solutions. 
The next sections are devoted to the proof of Propositions \ref{prop_U_app} and \ref{structure_NLterm}.


\section{Proof of the linear approximation result}
\label {estim_error_terms}

 In this section we prove Proposition~\ref{prop_U_app},  which consists in two types of results: on the one hand the convergence result~\eqref{prop_U_app_eq2} and the a priori bounds~\eqref{bdd gradients} and~\eqref{small BL}, and  on the other hand the fact that~$U_{\app,\e} $ satisfies approximately the linear Stokes-Coriolis equation, namely  there is a  family of functions~$(p_\e)_{0< \e\leq \e_0} $ in~$L^2_{\rm loc}(\R^+;L^2(\Omega_\f ))$ such that
$$
  \cL_\e \bigr(U_{\app,\e}  \bigr) \eqdefa \partial_tU_{\app,\e} - \e \beta \Delta U_{\app,\e} + \frac{1}{\e}   e_\zrm\wedge U_{\app,\e} 
 $$
satisfies
\beq\label{Linear equation}
\lim_{\e \to 0}\Big\| \cL_\e \bigr(U_{\app,\e}  \bigr)  + \frac 1 \e \nabla p_\e\Big\|_{L^1(\R^+; L^2(\Omega_\f ))} = 0 \, .
\eeq  
Let us start by proving the convergence of the approximate solution. 
  By construction, the components of $U_{\app,\e}$ are smooth functions over $\overline{\Om_\f}$.
Owing to the conditions in \eqref{eq:U-in-V_s}, we deduce that $\big(U_{\app,\e}\big)_{0<\e\leq\e_0}$ is a family of elements of $C^1(\R^+;\mc V_\s)$.
\subsection{Proof of the convergence result \rm(\ref{prop_U_app_eq2}) }\label{sec:convergence 2.1}
To start with, let us prove~\eqref{prop_U_app_eq2},  {that is}
$$
\lim_{\e\to 0} \bigl\| U_{\app,\e}   - \overline u\bigr\|_{L^\infty(\R^+;L^2(\Om_\f))} = 0 \, .
$$
We  recall that
$$
U_{\app,\e} =  \sum_{j=0}^2 \e^j \wt U_{j,\rmint,\e} +    \sum_{j\in \{0,a,1,2\} } \e^j\bigl(\wt U_{j,\rm BL}^{\surf}+\wt U_{j,\rm BL}^\bot \bigr)
  \, ,
$$
and~$\overline u$ is defined in~\eqref{wellprepared initial data} and \eqref{form of the limit}: $\oline u= \big(\oline u_\h(t,x_\h),0\big)$, where
$\oline u_\h = u^\theta(t,\rho) \nabla_\h^\perp\rho$ and $u^\theta$ given by \eqref{curved_BL12}, namely
\[ 
u^\theta (t,\rho) =  e^{-t\lambda_\f(\rho) }\ u_{0}^\theta(\rho)\,, \qquad \with 
\lambda_\f(\rho) =  \frac {\sqrt {2\b}}   {\f(\rho)} \,\frac {1+\sqrt[4]{1+\f '^2(\rho)}  } {2}\, \cdotp
\] 

We start by estimating the difference between the first term $\wt U_{0,\rmint,\e}$ and the asymptotic velocity profile $\oline u$.
Recall   that, by \eqref{def:u^theta_e} and~\eqref{newdefU0int2}, one has
\[ 
 \wt U_{0,\rmint,\e} (t,x_\h)= u^\theta_\e(t,\rho(x_\h))\ \nabla_\h^\perp\rho(x_\h) = 
 \Bigg(1 - \chi\Big(\frac{\phi(\rho(x_\h))}{\e^{1-a}}\Big)\Bigg)\ u^\theta(t,\rho(x_\h)) \ \nabla_\h^\perp\rho(x_\h)\,.
\] 
with $u^\theta$ defined just above.
Now we notice that,  by definition of~$\lambda_\f$ and thanks to~\eqref{Defin_Domain_0}, one has
 $$
 {\lambda_\f(\rho) =  \frac {\sqrt {2\b}}   {\f(\rho)} \,\frac {1+\sqrt[4]{1+\f '^2(\rho)}  } {2}} \geq
\frac{\sqrt{2\beta}}{\phi(\rho)} \geq \frac {\sqrt {2\b}} H \,\eqdefa \lam\,,
$$
so, for any non-negative time $t$, we can bound
\beno
\bigl \| \wt U_{0,\rmint,\e}(t)  - \overline u(t)\bigr\|^2_{L^2(\Om_\f)}  
& \leq &  e^{-2\lam t} 
\int_{\cO} \Bigl(\int_{-\f(\rho)}^0 dz\Bigr)   \chi^2 \Bigl(\frac {\f(\rho(x_\h))}   {\e^{1-a}}\Bigr) |u_0^\theta(\rho(x_\h))|^2 dx_\h \\
& \leq  & e^{-2\lam t } 
\int_{\cO} \f(\rho(x_\h))  \chi^2 \Bigl(\frac {\f(\rho(x_\h))}   {\e^{1-a}}\Bigr) |u_0^\theta(\rho(x_\h))|^2 dx_\h\\
& \leq  &  \e^{1-a}e^{-2\lam t} 
\int_{\cO} \frac {\f(\rho(x_\h))}{\e^{1-a}}  \chi^2 \Bigl(\frac {\f(\rho(x_\h))}   {\e^{1-a}}\Bigr) |u_0^\theta(\rho(x_\h))|^2 dx_\h  \,.
\eeno
We infer that,  for any non negative~$t$,   
\beq
\label {prop_U_app_demoeq1}
\bigl \|\wt U_{0,\rmint,\e}(t)  - \overline u(t)\bigr\|_{L^2(\Om_\f)}  
 \leq C  \e^{\frac {1-a} 2} e^{-\lam t} \|u^\theta_0 \circ \rho\|_{L^2  } \, ,
\eeq
whence  we immediately deduce  that 
\beq
\label {prop_U_app_demoeq1b}
\bigl \|\wt U_{0,\rmint,\e} - \overline u \bigr\|_{L^\infty(\R^+; L^2(\Om_\f))}  
 \leq C  \e^{\frac {1-a} 2}\|u_0  \|_{ {L^2(\cO) }}\, ,
\eeq
which converges to zero since~$a$ is less than~$1$.

\medbreak
Now let us turn to the other terms defining~$U_{\app,\e} $.  The estimates of the boundary layer terms on the surface are    classical, we refer for instance to Section~7.1 of~\cite{cdggbook}. We  shall perform all the estimates here for completeness, and because of the presence of the cut-off at the shore, which entails some additional difficulties. We shall rely   on the following lemma.
We notice  indeed that all the terms in the expansion are   functions of~$ u^\theta_\e$ or~$u^\theta_\e/\f$, and  of their horizontal derivatives. The following lemma provides estimates of general expressions that appear in the definitions of those terms, and we will be using it many times in the following. Its proof is postponed to   Appendix~\ref{appendix1}.
 \begin{lemme}
\label {deriv_U_0int}
{\sl
Let  $m_0$ be a given integer and let~$Q = Q(x_\h,\cdot)$ be a polynomial of degree~$m_0$ in its second variable, which writes under the form
$$
Q(x_\h,X) = \sum_{j=0}^{m_0} Q_j\big(\rho(x_\h)\big)X^j \, , 
$$ 
where the coefficients~$ Q_j(\rho)$ are smooth functions of~$\rho$, bounded as well as all their derivatives. Let us define
$$
  F_\e(t,x_\h) \eqdefa  e^{-t{\lambda_\f (\rho(x_\h) )}} \biggl(1 - \chi\Big(\frac{\phi\big(\rho(x_\h)\big)}{\e^{1-a}}\Big)\biggr)Q\Bigl( {x_\h}, \frac 1{ \f\big(\rho(x_\h)\big)}\Bigr) \, .
$$
Then for any integer~$k$,  a constant~$C_k$ exists such that, for any~$\e$ small enough, the following estimate holds true:
given any multi-index~$\alpha\in\N^2$, with $|\alpha|=k$, one has
\[
\bigl | \partial_{x_\h}^\alpha   F_{\e}(t,x_\h)  \bigr| \leq C_k e^{-\frac\lam2 t }  \e^{-(k+m_0)(1-a)} \supetage {\g\leq |\al|}{0\leq j\leq m_0} |\partial_{\rho}^{\g}Q_j(\rho)|\,,
\]
where~$\lam \eqdefa  {\sqrt {2\b}} /H$.
}
\end {lemme}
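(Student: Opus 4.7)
The plan is to exploit the crucial fact that, on the support of $1-\chi\big(\phi(\rho)/\e^{1-a}\big)$, one has $\phi(\rho(x_\h))\geq c\,\e^{1-a}$ for some $c>0$ (since $\chi\equiv 1$ on $[0,1/2]$); this converts every $1/\phi$ that shows up after differentiation into a factor of $\e^{-(1-a)}$. I would then apply the Leibniz rule to split $\partial^\alpha F_\e$ as a sum over $\alpha_1+\alpha_2+\alpha_3=\alpha$ of products of derivatives of the three factors $e^{-t\lambda_\f(\rho)}$, $1-\chi(\phi/\e^{1-a})$, and $Q(x_\h,1/\phi(\rho))$, and estimate each of them separately.

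For the cut-off, a Fa\`a di Bruno expansion of $\chi(\phi(\rho(x_\h))/\e^{1-a})$, together with the boundedness of the derivatives of $\chi$, $\phi$, and $\rho$, gives the clean bound $|\partial^{\alpha_2}(1-\chi(\phi/\e^{1-a}))|\leq C_{|\alpha_2|}\e^{-|\alpha_2|(1-a)}$. For the polynomial factor, the standard estimate $|\partial_\rho^n (\phi^{-j})|\leq C_{n,j}\phi^{-(n+j)}$ combined with Leibniz yields
\[
\bigl|\partial^{\alpha_3} Q\big(x_\h,1/\phi(\rho)\big)\bigr|\;\leq\; C_{|\alpha_3|}\!\!\!\sup_{\gamma\leq|\alpha_3|,\;j\leq m_0}\!\!\!|\partial_\rho^{\gamma}Q_j(\rho)|\cdot \phi^{-(|\alpha_3|+m_0)}\,.
\]

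The delicate ingredient is the exponential factor, and this is where I expect the main work to lie. A Fa\`a di Bruno expansion gives
\[
\bigl|\partial^{\alpha_1}e^{-t\lambda_\f(\rho)}\bigr|\;\leq\; C_{k_1}\,e^{-t\lambda_\f(\rho)}\sum_{p=1}^{k_1} t^p\!\!\!\sum_{n_1+\cdots+n_p=k_1}\!\!\!\prod_{i=1}^p\bigl|\partial_\rho^{n_i}\lambda_\f(\rho)\bigr|\,,
\]
with $k_1=|\alpha_1|$. The structure of $\lambda_\f(\rho)=\frac{\sqrt{2\b}}{\phi(\rho)}\cdot\frac{1+(1+\phi'^2)^{1/4}}{2}$ together with the uniform bounds on the derivatives of $\phi$ imply that $|\partial_\rho^n\lambda_\f|\leq C_n\,\phi^{-(n+1)}$, so each inner product is controlled by $C\,\phi^{-(k_1+p)}$. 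The key trick to avoid losing this negative power of $\phi$ is to exploit the lower bound $\lambda_\f(\rho)\geq \sqrt{2\b}/\phi(\rho)$ and the elementary inequality $\sup_{s\geq 0}s^p e^{-s\mu/2}\leq (2p/(e\mu))^p$, which together yield $t^p e^{-t\lambda_\f/2}\leq C_p\,\phi^p$. Using in addition $\lambda_\f\geq \lambda$, the remaining half-exponential absorbs the time decay at rate $\lambda/2$, leaving
\[
\bigl|\partial^{\alpha_1}e^{-t\lambda_\f(\rho)}\bigr|\;\leq\; C_{k_1}\,e^{-\lambda t/2}\,\phi^{-k_1}\,.
\]

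Finally, I would put the three bounds together and, on the support of $F_\e$, replace every factor $\phi^{-1}$ by $C\e^{-(1-a)}$; summing over the finitely many decompositions $\alpha_1+\alpha_2+\alpha_3=\alpha$ then gives
\[
\bigl|\partial^{\alpha}F_\e(t,x_\h)\bigr|\;\leq\; C_k\,e^{-\lambda t/2}\,\e^{-(k+m_0)(1-a)}\!\!\!\sup_{\gamma\leq|\alpha|,\;j\leq m_0}\!\!\!|\partial_\rho^{\gamma}Q_j(\rho)|\,,
\]
which is the claimed inequality. The main obstacle, besides bookkeeping with Fa\`a di Bruno, is the correct handling of the $t^p$ factors produced by differentiating the exponential, which would otherwise spoil the uniformity in time; this is precisely fixed by the absorption trick $t^p e^{-t\lambda_\f/2}\lesssim \phi^p$ that brings back exactly the $p$ powers of $\phi$ needed to cancel the $\phi^{-(k_1+p)}$ coming from the chain rule.
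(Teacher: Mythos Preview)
Your proof is correct and rests on exactly the same three ingredients as the paper's: the localization $\phi\gtrsim\e^{1-a}$ on the support of the cut-off, the bound $|\partial_\rho^n\lambda_\f|\lesssim\phi^{-(n+1)}$, and the absorption trick $t^p e^{-t\lambda_\f/2}\lesssim\phi^p$ to kill the time factors. The only difference is organizational: the paper packages the computation by proving by induction an exact identity of the form $\partial_\rho^k\mc F_\e = e^{-t\lambda_\f}\sum_{j=0}^k \mc F_\e^{k,j}(\rho,1/\phi,t/\phi)\,\partial_\rho^{k-j}v$, where each $\mc F_\e^{k,j}$ is a polynomial of controlled degree in $1/\phi$ and $t/\phi$, whereas you go straight to bounds via Leibniz and Fa\`a di Bruno on the three factors separately; your route is a bit more direct and avoids the inductive bookkeeping, at the cost of not having an explicit structural formula for the derivative.
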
} 

Our goal is to   apply Lemma~\ref{deriv_U_0int} to estimate   the remaining terms entering the definition of~$U_{\app,\e} $. 
Notice that all these terms are written in the  basis~$\big(\nabla_\h \rho(x_\h),\nabla_\h^\perp\rho(x_\h),e_{\rm z}\big)$. Since~$\rho$ is smooth and bounded as well as all its  derivatives, we can ignore the contribution of the basis vectors in the estimates (although they are not   functions of~$\rho$ only), and therefore apply Lemma~\ref{deriv_U_0int} to the components of those terms.

For the sake of completeness, let us start by considering the term~$ \wt U_{0,\rmint,\e} $. 
As recalled above, there holds
\beq\label{defU0rmint1}
 \wt U_{0,\rmint,\e} (t,x_\h)=  
 \Bigg(1 - \chi\Big(\frac{\phi(\rho(x_\h))}{\e^{1-a}}\Big)\Bigg)\ e^{-t{\lambda_\f(\rho(x_\h))} }\ u_{0}^\theta(\rho(x_\h))\ \nabla_\h^\perp\rho(x_\h)\,.
\eeq
Let us apply Lemma~\ref{deriv_U_0int}: we notice that~$ m_0=0$
and each component of~$ \wt U_{0,\rmint,\e} (t,x_\h)$  in the basis~$\big(\nabla_\h \rho(x_\h),\nabla_\h^\perp\rho(x_\h),e_{\rm z}\big)$ is of the required form,
with $$Q_0( {\rho})= \ u_{0}^\theta(\rho)\, .$$  It follows that
\beq
\label {22demoeq3ab}
\begin {aligned}
\sup_{|\alpha| = k} \bigl\| \partial_{x_\h}^\alpha\wt U_{0,\rmint,\e}(t,\cdot)\bigr\|_{L^2(\Om_\f)} 
  &\leq C e^{-\frac\lam2 t }  \e^{ -k(1-a)} \|u_0\|_{H^k(\cO)} \, , \\
\sup_{|\alpha| = k} \bigl\| \partial_{x_\h}^\alpha \wt U_{0,\rmint,\e} (t,\cdot)\bigr\|_{L^\infty(\Om_\f)} 
  &\leq C e^{-\frac\lam2 t }   \e^{ -k(1-a)}\|u_0\|_{W^{k,\infty}(\cO)}\, .
\end {aligned}
\eeq
Observe that, when $k>0$, the right-hand side of the previous estimates becomes unbounded when $\e$ approaches $0$. In particular,
when proving~\eqref{bdd gradients} in the next subsection, we will need to improve the above $L^\infty$ bound for $k=1$: as we will see, the assumption
on the structure of the initial datum, i.e. the fact that $u_0/\phi$ belongs to $L^\infty(\mc O)$, plays a crucial role.

Similarly, recalling their definition in~\eqref{flat_BL2_cut-off},~$\wt U_{0,BL}^\surf$ and~$\wt U_{0,BL}^\bot$ correspond to~$ m_0=0$, and~$Q_0$ is of the form
$$
 \chi \biggl( -\frac z {\e^{1-a}}\biggr)   \Big(e^{\frac z {\sqrt E}}  +  e^{- \frac {z+\f } {\d \sqrt E} }\Big)u_{0}^\theta(\rho)\ \nabla_\h^\perp\rho(x_\h)
\, ,
$$
multiplied by  oscillating functions of~$z /{\sqrt E}$ and~$  {(z+\f) } /{\d \sqrt E}$  at the surface and at the bottom respectively. At the surface these oscillating functions are bounded as well as  all their horizontal derivatives. In contrast, at the bottom  horizontal derivatives produce factors of the order~$1/\sqrt E$. 
Thanks to the exponential decay in~$z$ and bounding all the terms by the worst contribution (which produces a factor~$\e^{-1}$ each time  a horizontal derivative acts on an oscillating term), we infer that
\beq
\label {22demoeq3b}
\begin {aligned}
\sup_{|\alpha| = k}\Big( \bigl\| \partial_{x_\h}^\alpha\wt U_{0,BL}^\surf (t,\cdot)\bigr\|_{L^2(\Om_\f)} 
+  \bigl\| \partial_{x_\h}^\alpha\wt U_{0,BL}^\bot (t,\cdot)\bigr\|_{L^2(\Om_\f)} \Big)
  &\leq C  e^{-\frac\lam2 t } \e^{\frac 12-k} \|u_0\|_{H^k(\cO)} \, ,\\
\sup_{|\alpha| = k} \Big(\| \partial_{x_\h}^\alpha\wt U_{0,BL}^\surf (t,\cdot)\bigr\|_{L^\infty(\Om_\f)} +  \bigl\|\partial_{x_\h}^\alpha\wt U_{0,BL}^\bot (t,\cdot)\bigr\|_{L^\infty(\Om_\f)} \Big)
  &\leq C e^{-\frac\lam2 t }  \e^{-k}\|u_0\|_{W^{k,\infty}(\cO)}\, .
\end {aligned}
\eeq
Similarly, by~\eqref{eq:def Uasurfbot}, we deduce
\beq
\label {22demoeq3c}
\begin {aligned}
\sup_{|\alpha| = k} \Big(\bigl\| \partial_{x_\h}^\alpha\wt U_{a,BL}^\surf (t,\cdot)\bigr\|_{L^2(\Om_\f)} 
+   \bigl\| \partial_{x_\h}^\alpha\wt U_{a,BL}^\bot (t,\cdot)\bigr\|_{L^2(\Om_\f)} \Big)
  &\leq C e^{-\frac\lam2 t }  \e^{ {\frac 12}-k(2-a)}\|u_0\|_{H^k(\cO)}\, ,\\
\sup_{|\alpha| = k} \Big(\bigl\| \partial_{x_\h}^\alpha \wt U_{a,BL}^\surf (t,\cdot)\bigr\|_{L^\infty(\Om_\f)} +  \bigl\|\partial_{x_\h}^\alpha\wt U_{a,BL}^\bot (t,\cdot)\bigr\|_{L^\infty(\Om_\f)} 
  \Big)&\leq C e^{-\frac\lam2 t } \e^{-k(2-a)}\|u_0\|_{W^{k,\infty}(\cO)}\, .
\end {aligned}
\eeq
The term~$\wt U_{1,\rmint,\e}$, defined in~\eqref{curved_BL10}, is slightly more delicate, since the third component involves taking derivatives of~$\nabla_\h \rho(x_\h)$. But, as remarked above, these are harmless and can be ignored. So,
we find that~$m_0=2$ and
\beq
\label {22demoeq2}
\begin {aligned}
\sup_{|\alpha| = k} \bigl\| \partial_{x_\h}^\alpha \wt U_{1,\rmint,\e} (t,\cdot)\|_{ L^2(\Om_\f) } \leq Ce^{-\lam t}   \e^{ {-(k+2)(1-a)}} \|u_0\|_{H^{1+k}(\cO)} \,,\\
 \sup_{|\alpha| = k} \bigl\| \partial_{x_\h}^\alpha\wt U_{1,\rmint,\e} (t,\cdot)\|_{ L^\infty(\Om_\f )} \leq Ce^{-\lam t}  \e^{ {-(k+2)(1-a)}} \|u_0\|_{W^{1+k,\infty}(\cO)}\, .
\end {aligned}
\eeq
The same holds  for the boundary layers defined in~\eqref{curved_BL11}, so
\beq
\label {22demoeq4}
\begin {aligned}
\sup_{|\alpha| = k}\Big( \bigl\| \partial_{x_\h}^\alpha \wt U_{1,BL}^\surf (t,\cdot)\bigr\|_{L^2(\Om_\f)} + \bigl\|\partial_{x_\h}^\alpha \wt U_{1,BL}^\bot (t,\cdot)\bigr\|_{L^2(\Om_\f)} \Big)
  &\leq C e^{-\lam t}\e^{ \frac 12-(k+2)(1-a)}\|u_0\|_{H^{1+k}(\cO)}  \, , \\
\sup_{|\alpha| = k}\Big( \bigl\| \partial_{x_\h}^\alpha\wt U_{1,BL}^\surf (t,\cdot)\bigr\|_{L^\infty(\Om_\f)} + \bigl\|\partial_{x_\h}^\alpha\wt U_{1,BL}^\bot (t,\cdot)\bigr\|_{L^\infty(\Om_\f)} 
  &\leq C e^{-\lam t}\e^{-(k+2)(1-a)}\|u_0\|_{W^{1+k,\infty}(\cO)}\, .
\end {aligned}
\eeq 
Finally, we have, according to~\eqref{curved_BL151},
\beq
\label {22demoeq5}
\begin {aligned}
\sup_{|\alpha| = k}\Big( \bigl\| \partial_{x_\h}^\alpha \wt U_{2,BL}^\surf (t,\cdot)\bigr\|_{L^2(\Om_\f)} + \bigl\| \partial_{x_\h}^\alpha \wt U_{2,BL}^\bot (t,\cdot)\bigr\|_{L^2(\Om_\f)} 
  \Big)&\leq C e^{-\frac\lam2 t }   \e^{ \frac 12-(k+2)(1-a)} \|u_0\|_{H^{2+k}(\cO)} \, , \\
\sup_{|\alpha| = k}\Big( \bigl\| \partial_{x_\h}^\alpha\wt U_{2,BL}^\surf (t,\cdot)\bigr\|_{L^\infty(\Om_\f)} + \bigl\|\partial_{x_\h}^\alpha \wt U_{2,BL}^\bot (t,\cdot)\bigr\|_{L^\infty(\Om_\f)} 
 \Big) &\leq C \e^{-(k+2)(1-a)}\|u_0\|_{W^{2+k,\infty}(\cO)}\, .
\end {aligned}
\eeq 

The result~\eqref{prop_U_app_eq2} follows directly by putting together Estimate~\eqref{prop_U_app_demoeq1b} and the bounds
in~\eqref{22demoeq3b}--\eqref{22demoeq5}.

\subsection{{Proof of the bounds {\rm\eqref{bdd gradients}} and {\rm{\eqref{small BL}}}}}\label{sss:conv 2.2}

Now, let us prove the bound~\eqref{bdd gradients}.
We set
\begin{equation} \label{defUBLeps}
U_{{\rm BL},\e}\eqdefa   \sum_{j\in \{0,a,1,2\} } \e^j\bigl(\wt U_{j,\rm BL}^{\surf}+\wt U_{j,\rm BL}^\bot \bigr)
\, .
\end{equation}
Proving~\eqref{bdd gradients} boils down to proving that the two families~$\bigl(\nabla\wt U_{0,\rmint,\e}\bigr)_\e$ and~$\bigl(\e\nabla\wt U_{1,\rmint,\e}\bigr)_\e$ are  bounded in the space~$L^1(\R^+;L^\infty(\Om_\f))$. 
We start by considering the latter term, which is easier to bound.
From Definition~\eqref{curved_BL10}, we see that~$\nabla\wt U_{1,\rmint,\e}$ is linear in~$z$; so, thanks to~\eqref{22demoeq2}
and to Lemma~\ref{deriv_U_0int}, we find
\begin{equation} \label{est:D-U_1-int}
 \e\left\|\nabla \wt U_{1,\rmint,\e} \right\|_{L^1(\R^+;L^\infty(\Om_\phi))}\,\leq\,C\ \e^{1-3(1-a)} \|u_0\|_{W^{1,\infty}(\cO)}\,,
 \end{equation}
 which is bounded as soon as~$a \geq 2/3$.

We now switch to the estimate of $\nabla\wt U_{0,\rmint,\e}$. As already remarked we cannot rely on~\eqref{22demoeq3ab}, which does not provide a uniform bound
in $\e$. Instead, starting from Formula~\eqref{defU0rmint1}, we explicitly compute
\[
 \nabla_\h\wt U_{0,\rmint,\e}\,=\,\big(A_1 + A_2 + A_3 \big)\,\nabla_\h\rho\otimes\nabla_\h^\perp\rho\,+\,A_4\,\nabla_\h\nabla^\perp_\h\rho\,,
\]
where we have defined
\begin{align*}
 A_1 &\eqdefa \frac{1}{\e^{1-a}} \ \chi'\left(\frac{\phi(\rho)}{\e^{1-a}}\right) \ \phi'(\rho) \ e^{-t\lambda_\f(\rho)} u^\theta_0(\rho)\,, \\
 A_2 &\eqdefa -t  \left(1-\chi\left(\frac{\phi(\rho)}{\e^{1-a}}\right)\right) \ \ e^{-t\lambda_\f(\rho)} \ 
 \left(\lambda_\f\right)'(\rho) \ u^\theta_0(\rho)\,, \\
 A_3 &\eqdefa \left(1-\chi\left(\frac{\phi(\rho)}{\e^{1-a}}\right)\right) \ \ e^{-t\lambda_\f(\rho)}\ \left(u^\theta_0\right)'(\rho) \qquad \mbox{ and} \\
 A_4 &\eqdefa \left(1-\chi\left(\frac{\phi(\rho)}{\e^{1-a}}\right)\right) \ \ e^{-t\lambda_\f(\rho)}\ \left(u^\theta_0\right)(\rho)\,.
\end{align*}

Observe that, as done in~\eqref{22demoeq3ab} for $k=0$, one has
\[
 \left\|A_4\right\|_{L^\infty(\Om_\f)}\,\leq\,C\,e^{-\frac{\lambda}{2}t}\,\left\|u_0\right\|_{L^\infty(\mc O)}\,.
\]
Arguing analogously, one also gets
\[
 \left\|A_3\right\|_{L^\infty(\Om_\f)}\,\leq\,C\,e^{-\frac{\lambda}{2}t}\,\left\|u_0\right\|_{W^{1,\infty}(\mc O)}\,.
\]

Next, consider the term $A_1$, whose expression looks singular, at a first sight, when $\e$ goes to $0$. The key remark is that 
\[
\frac{\phi(\rho)}{\e^{1-a}}\,\leq\,1\qquad \mbox{ on the support of }\ \chi'\,.
\]
Therefore, we can write
\begin{align*}
A_1\, =\, \frac{\phi(\rho)}{\e^{1-a}} \ \chi'\left(\frac{\phi(\rho)}{\e^{1-a}}\right) \ \phi'(\rho) \ e^{-t\lambda_\f(\rho)}\
 \frac{u^\theta_0(\rho)}{\phi(\rho)}\,\virgp
\end{align*}
which implies the bound
\[
 \left\|A_1\right\|_{L^\infty(\Om_\f)}\,\leq\,C\,e^{-\frac{\lambda}{2}t}\,\left\|\frac{u_0}{\phi}\right\|_{L^\infty(\mc O)}\,.
\]

Finally, let us focus on the term $A_2$. By recalling the definition of $\lambda_\f$ given in~\eqref{form of the limit}
and explicitly computing its derivative, we see that the term $A_2$ can be written under the form
\begin{align*}
A_2 =  e^{-t\lambda_\f(\rho)} \left(f_1(\rho)\ \frac{t}{\phi(\rho)} + f_2(\rho) \ \frac{t}{\big(\phi(\rho)\big)^2}\right) \ u^\theta_0(\rho)\,,
\end{align*}
where the functions $f_1$ and $f_2$ only depend on $\rho$ and are bounded.
Let us deal only with the term involving $f_2$, the other one actually being simpler. We can decompose it as
\begin{align*}
 e^{-t\lambda_\f(\rho)} \ f_2(\rho) \ \frac{t}{\big(\phi(\rho)\big)^2} \ u^\theta_0(\rho) &= 
e^{-t\frac{\lambda_\f(\rho)}{2}}\ e^{-t\frac{\phi(\rho)\,\lambda_\f(\rho)}{2\phi(\rho)}} \ \frac{t}{\phi(\rho)} \ f_2(\rho) \ \frac{u^\theta_0(\rho)}{\phi(\rho)}\,\cdotp
\end{align*}
Remarking that the function $\phi\,\lambda_\f$ belongs to $L^\infty(\mc O)$, with
\[
\phi(\rho)\,\lambda_\f(\rho)\,\geq\,\sqrt{2\,\beta}\,,
\]
and using the boundedness of the function $\alpha\mapsto e^{-c \alpha}\alpha$ over $\R^+$ when $c>0$, we deduce
\begin{align*}
\left| e^{-t\lambda_\f(\rho)} \ f_2(\rho) \ \frac{t}{\big(\phi(\rho)\big)^2} \ u^\theta_0(\rho)\right|\,\leq\,C\,\left|\frac{u_0(\rho)}{\phi(\rho)}\right|
\,e^{-\frac{\lambda}{2}t}\,.
\end{align*}
This implies the following uniform bound for $A_2$:
\[
 \left\|A_2\right\|_{L^\infty(\Om_\f)}\,\leq\,C\,e^{-\frac{\lambda}{2}t}\,\left\|\frac{u_0}{\phi}\right\|_{L^\infty(\mc O)}\,.
\]

We now collect the bounds for $A_1\ldots A_4$ and use the fact that the function $\rho$ is smooth and bounded with all its derivatives.
After noticing that $\partial_\zrm \wt U_{0,\rmint,\e} \equiv 0$, we finally gather
\beq\label{boundnablaU0}
\big\|
\nabla \wt U_{0,\rmint,\e} 
\big\|_{L^1(\R^+;L^\infty(\Om_\phi))} \leq C\,\left(\left\|u_0\right\|_{W^{1,\infty}(\mc O)}\,+\,\left\|\frac{u_0}{\phi}\right\|_{L^\infty(\mc O)}\right)\,.
\eeq

Together with the bound in~\eqref{est:D-U_1-int}, Estimate~\eqref{boundnablaU0} concludes the proof of the property claimed in~\eqref{bdd gradients}.

\medskip

Finally let us prove the bound~\rm\eqref{small BL}. Recall that we need to prove that
$$
\bigl\|  \sqrt {d_\f} U_{{\rm BL},\e} \bigr\|_{L^\infty(\R^+; L^\infty_\h L^2_\v(\Om_\f))}  
\leq C\e\|u_0\|_{ L^\infty(\Omega_\f)}+C\e^2\|u_0\|_{ {W^{1,\infty}}(\Omega_\f)}\,,
$$
 where~$d_\f$ is defined by
$
d_\f(x_\h, z) \eqdefa \min\{ -z, \f(\rho(x_\h))+z\}$
and~$U_{{\rm BL},\e} $ is defined in~\eqref{defUBLeps}.
Let us start by considering~$\wt U_{0,\rm BL}^\surf+\wt U_{0,\rm BL}^\bot$,
the components of which are defined
  in~\eqref {flat_BL2_cut-off} and can be bounded by
$$e^{- \lambda t}\Bigl(e^{\frac z {\sqrt E}} + e^{-\frac {z+\f} {\sqrt E}}\Bigr) |u^\theta_0(\rho)|\,.$$
Then, an immediate calculation gives
$$
\bigl\|  \sqrt {d_\f} (  \wt U_{0,\rm BL}^\surf+\wt U_{0,\rm BL}^\bot) \bigr\|_{L^\infty(\R^+; L^\infty_\h L^2_\v(\Om_\f))}  
\leq C\e\|u_0\|_{L^{\infty}(\cO)}\,.
$$
Similarly,   Lemma~\ref{deriv_U_0int} provides
$$
\bigl\|\e  \sqrt {d_\f} (  \wt U_{1,\rm BL}^\surf+\wt U_{1,\rm BL}^\bot) \bigr\|_{L^\infty(\R^+; L^\infty_\h L^2_\v(\Om_\f))}  
\leq C\e^2\|u_0\|_{W^{1,\infty}(\cO)}\,.
$$
The estimate for $ \wt U_{2,\rm BL}^\surf+\wt U_{2,\rm BL}^\bot$ is similar, so~\eqref{small BL} is proved.

\subsection{The linear equation} \label{sec:linear}

To conclude the proof of Proposition \ref{prop_U_app}, we now prove~\eqref{Linear equation}.  By construction (see in particular the computations of Section~\ref{interior0} and~\ref{s:Dir-1}), there holds
    \[
\partial_t\wt U_{0,\rmint,\e} + \frac{1}{\e} e_\zrm\wedge\Big(\wt U_{0,\rmint,\e} + \e \wt U_{1,\rmint,\e}\Big) + \frac{1}{\e}\nabla \wt P_{0,\rm int,\e} = 0\,.
\]
In addition, estimates~\eqref{22demoeq3ab} and~\eqref{22demoeq2}  imply that
$$
\begin{aligned}
\Big\| \e\beta \Delta\Big( \wt U_{0,\rmint,\e} + \e \wt U_{1,\rmint,\e}\Big)\Big\|_{L^1(\R^+; L^2(\Omega_\f ))}
& \leq C  \e^{1 + 2(a-1)} \|u_0\|_{H^2(\cO)} +  \e^{2 + 4(a-1)} \|u_0\|_{H^3(\cO)}  \\
& \leq C \e^{2a-1} \|u_0\|_{H^3(\cO)}\,,
\end{aligned}
$$
since~$2a-1\leq 4a-2$.
Note that~$ \wt P_{0,\rm int,\e} $ is defined in~\eqref{new defp1eps} by
$$
\wt P_{0,\rm int,\e} (t,\rho,z) \eqdefa -\int^\rho_0 u_\e^\theta(t,\s)\,d\s
$$
and   belongs to~$L^2_{\rm loc}(\R^+;L^2(\Omega_\f ))$ as required.
Finally, because of Definition \eqref{def:u^theta_e} and Formula~\eqref{curved_BL12},
a factor $\big(\phi(\rho)\big)^{-1}$ appears when taking
the time derivative of $u^\theta_\e$, so we deduce from Lemma~\ref{deriv_U_0int} that
$$\Big\|\partial_t\Big(\e \wt U_{1,\rmint,\e}\Big)\Big\|_{L^1(\R^+; L^2(\Omega_\f ))} \leq C \e^{1-3(1-a)} \|u_0\|_{H^1(\cO)}\,.
$$
Assuming that~$a>2/3$, we find
 \beq
\label {prop_U_app_demoeq8}
\lim_{\e \to 0}\Big \|  \cL_\e \bigr(\wt U_{0,\rmint,\e} +\e  \wt U_{1,\rmint,\e}\bigr) + \frac{1}{\e}\nabla \wt P_{0,\rm int,\e} \Big\|_{L^1(\R^+;L^2(\Om_\f))}= 0 \, .
\eeq

Now let us turn to the boundary layer terms. By construction, the terms~$\wt U^\surf_{0,\rm BL}$ and~$\wt U^\bot_{0,\rm BL}$ cancel the  diffusion term with the rotation term, as seen in~\eqref{BLsurf}. So, one just needs to consider the time   derivative 
which acts, as in the case of~$ \wt U_{1,\rmint,\e}$ above, on~$u^\theta_\e$,
{making a factor $1/\f$ appear}. Using Lemma\refer {deriv_U_0int}, we get 
$$
\Big \|  \cL_\e \bigr(\wt U^\surf_{0,\rm BL}+\wt U^\bot_{0,\rm BL}\bigr) \Big\|_{L^1(\R^+;L^2(\Om_\f))}
\leq C \e^{\frac12-(1-a)}  \|u_0\|_{H^2(\cO)}\,,
$$
 hence
\beq
\label {prop_U_app_demoeq9}
\lim_{\e \to 0}\bigl \|  \cL_\e \bigr(\wt U^\surf_{0,\rm BL}+\wt U^\bot_{0,\rm BL}\bigr) \bigr\|_{L^1(\R^+;L^2(\Om_\f))}=0 \, .
\eeq 
Concerning the boundary layers of order $a$ and higher, they were not inserted in~$\cL_\e$ before: they were introduced to recover either the divergence free condition, or   the boundary conditions. One must therefore compute the action of each of the three parts of~$\cL_\e$. We shall not write the details here, as actually,
 {for $j\in\{a,1,2\}$}, the terms $\e^j \bigl(\wt U^\surf_{j,\rm BL}+\wt U^\bot_{j,\rm BL}\bigr) $ satisfy the same bounds
as~$\wt U^\surf_{0,\rm BL}+\wt U^\bot_{0,\rm BL}$.  So, in the end we find
\beq\label{est:L_BL}
\lim_{\e \to 0}\Big \|  \cL_\e\sum_{j\in\{0,a,1,2\}} \bigr(\wt U^\surf_{j,\rm BL}+\wt U^\bot_{j,\rm BL}\bigr) \Big\|_{L^1(\R^+;L^2(\Om_\f))} = 0 \, . 
\eeq

 \section {Proof of Proposition\refer {structure_NLterm}.}
\label {s:error-nonlin}

In order to understand the structure of the non linear term, let us decompose it in the following way. We denote 
$$
\wt U_{0,BL}\eqdefa \wt U^\surf _{0,BL}+\wt U^\bot_{0,BL}.
$$
 Let us write that 
\beq
\label {structure_NLterm_demoeq1}
\begin {aligned}
U_{\app,\e}\cdot\nabla U_{\app,\e} &= \wt U_{0,\rmint,\e} \cdot\nabla \wt U_{0,\rmint,\e}+ \wt U_{0,BL} \cdot\nabla \wt U_{0,BL} +\sum_{j=1}^3 \cQ_j\with\\
\cQ_1&  \eqdefa \wt U_{0,\rmint,\e} \cdot\nabla \bigl(U_{\app,\e}-\wt U_{0,\rmint,\e}\bigr)\,,\\
\cQ_2&  \eqdefa \bigl(U_{\app,\e}- \wt U_{0,\rmint,\e}\bigr)\cdot\nabla \bigl(U_{\app,\e}- \wt U_{0,BL}\bigr)\andf\\
\cQ_3&  \eqdefa \bigl(U_{\app,\e}- \wt U_{0,\rmint,\e}- \wt U_{0,BL} \bigr) \cdot\nabla \wt U_{0,BL}\,.
\end {aligned}
\eeq
With similar computations to those   leading to Formula\refeq {deriv_nabla_rho}, we get the  following relations:
\beq
\label {derive_repere_mobile}
\begin {aligned}
&\nabla_\h^\perp \rho\cdot \nabla_\h (\nabla_\h^\perp \rho ) = -\D_\h\rho \nabla_\h\rho\,,\ \nabla_\h^\perp \rho\cdot \nabla_\h (\nabla_\h \rho ) = \D_\h\rho \nabla^\perp_\h\rho
\andf\\
&\qquad\qquad\qquad\quad \nabla_\h \rho\cdot\nabla_\h  \nabla_\h\rho = \nabla_\h \rho\cdot\nabla_\h  \nabla^\perp_\h\rho =0\, .
\end  {aligned}
\eeq
They imply that
$$
\wt U_{0,\rmint,\e} \cdot\nabla \wt U_{0,\rmint,\e} = -\bigl(u_\e^\theta(t,\rho)\bigr)^2 \D_\h\rho\nabla_\h\rho.
$$
As the family of functions~$\bigl(u_\e^\theta(t,\rho)\bigr)_\e $ is bounded  in~$L^1(\R^+;L^\infty(\Om_\f))$ and tends to~$u^\theta(t,\rho)$ in~$L^\infty(\R^+;L^2(\Om_\f))$, we infer that 
\beq
\label {structure_NLterm_demoeq2}
\lim_{\e\rightarrow0 } \bigl\| \wt U_{0,\rmint,\e} \cdot\nabla \wt U_{0,\rmint,\e}+\bigl(u^\theta(t,\rho)\bigr)^2 \D_\h\rho\nabla_\h\rho\bigr\|_{L^1(\R^+;L^2(\Om_\f))} =0.
\eeq

Now let us treat the term~$\wt U_{0,BL} \cdot\nabla \wt U_{0,BL}$. As the support of the boundary layer on the surface and the support of the boundary layer at the bottom are disjoint, we get
\beq
\label {structure_NLterm_demoeq3}
\wt U_{0,BL} \cdot\nabla \wt U_{0,BL}= \wt U^\surf_{0,BL} \cdot\nabla \wt U^\surf_{0,BL}+\wt U^\bot_{0,BL} \cdot\nabla \wt U^\bot_{0,BL}
\eeq
Let us first state the following lemma, which covers the case of the surface also (it is enough to apply it with $\f\equiv 0$). It claims that the non-linear term~$\wt U_{0,BL} \cdot\nabla \wt U_{0,BL}$ does not create terms of  higher order. Its proof is postponed to  Appendix~\ref{appendix 2}.
\begin{lemme}
\label {l:non-linear}
{\sl
Let us consider five functions~$(M^\rho ,M ^\theta,N^\rho,N^\theta,N^\zrm)$ on~$[\rho_0,\infty[\times \R^-\times \R^+$. Let us define
$$
M _{\rm BL}\eqdefa M^\rho _{\rm BL} \nabla_\h\rho +M ^\theta_{\rm BL} \nabla^\perp_\h\rho-\f' M^\rho _{\rm BL} e_\zrm\andf 
 N_{\rm BL}\eqdefa N^\rho_{\rm BL} \nabla_\h\rho +N^\theta_{\rm BL} \nabla^\perp_\h\rho+  {N^\zrm_{\rm BL} e_\zrm}\,,
$$
with,  according to\refeq  {defin_BLtilde},
$$
 a_{\rm BL} (t,x_\h,z)\eqdefa a \biggl(\rho(x_\h), -\frac {z+\f(\rho(x_\h)) } {\d\sqrt E}, \frac {z+\f(\rho(x_\h)) } {\e^{1-a}} \biggr)
$$
Then we have
$$
\longformule{
M _{\rm BL}\cdot\nabla  N_{\rm BL} 
= \biggl(M^\rho  \Bigl(\partial_\rho N^{\rho}-\frac {\d'} \d \zeta_1\partial_{\zeta_1} N^{\rho}\Bigr) 
-M ^{\theta} N^{\theta}\D_\h\rho\bigg)_{\rm BL}  \nabla_\h\rho }
{  {}
+\biggl(M^\rho  \Bigl(\partial_\rho N^{\theta}-\frac {\d'} \d \zeta_1\partial_{\zeta_1} N^{\theta}\Bigr) 
- {M ^{\theta} N^{\rho}\D_\h\rho}\bigg)_{\rm BL}
 \nabla^\perp_\h\rho
+\biggl(M^\rho  \Bigl(\partial_\rho N^{\zrm}-\frac {\d'} \d \zeta_1\partial_{\zeta_1} N^{\zrm}\Bigr) 
\bigg)_{\rm BL} e_\zrm\,.
}
$$
}
\end {lemme}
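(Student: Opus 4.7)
The plan is to decompose $M_{\rm BL}\cdot\nabla N_{\rm BL}$ by expanding $N_{\rm BL}$ in the moving frame $(\nabla_\h\rho,\nabla_\h^\perp\rho,e_\zrm)$ and applying the Leibniz rule. Each of the three components produces two kinds of contributions: a \emph{transport} piece $(M_{\rm BL}\cdot\nabla N^\alpha_{\rm BL})\,X$ and a \emph{curvature} piece $N^\alpha_{\rm BL}\,(M_{\rm BL}\cdot\nabla)X$, where $X\in\{\nabla_\h\rho,\nabla_\h^\perp\rho,e_\zrm\}$. Since $\partial_\zrm\nabla_\h\rho=\partial_\zrm\nabla_\h^\perp\rho=0$, the curvature pieces are computed at once from~\eqref{derive_repere_mobile}: one finds $(M_{\rm BL}\cdot\nabla)\nabla_\h\rho=M^\theta_{\rm BL}\D_\h\rho\,\nabla_\h^\perp\rho$ and $(M_{\rm BL}\cdot\nabla)\nabla_\h^\perp\rho=-M^\theta_{\rm BL}\D_\h\rho\,\nabla_\h\rho$, which account for all the $\D_\h\rho$ terms appearing in the statement.

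The heart of the proof will consist in establishing, for any scalar profile $f=f(\rho,\zeta_1,\zeta_2)$, the simplification identity
\[
M_{\rm BL}\cdot\nabla f_{\rm BL}\,=\,M^\rho_{\rm BL}\left(\partial_\rho f\,-\,\frac{\d'}{\d}\,\zeta_1\,\partial_{\zeta_1}f\right)_{\rm BL}.
\]
To prove it, I would first apply the chain rule and write
\[
\nabla_\h f_{\rm BL}\,=\,(\partial_\rho f)_{\rm BL}\nabla_\h\rho\,+\,(\partial_{\zeta_1}f)_{\rm BL}\nabla_\h\zeta_1\,+\,(\partial_{\zeta_2}f)_{\rm BL}\nabla_\h\zeta_2,
\]
then observe that, since both $\zeta_1$ and $\zeta_2$ depend on $x_\h$ only through $\rho(x_\h)$, the gradients $\nabla_\h\zeta_1$ and $\nabla_\h\zeta_2$ are both collinear to $\nabla_\h\rho$. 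This immediately forces $\nabla_\h^\perp\rho\cdot\nabla_\h f_{\rm BL}=0$, so that the $M^\theta_{\rm BL}$--contribution in $M_{\rm BL}\cdot\nabla f_{\rm BL}$ vanishes identically. Only $M^\rho_{\rm BL}\,\nabla_\h\rho\cdot\nabla_\h f_{\rm BL}$ and $-\f' M^\rho_{\rm BL}\,\partial_\zrm f_{\rm BL}$ will survive, but each of them contains singular terms of order $(\sqrt{E})^{-1}$ and $\e^{-(1-a)}$ stemming from the dependence of $\zeta_1$ and $\zeta_2$ on $x_\h$ and $z$.

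The main — and essentially only — obstacle will be showing that these singular contributions cancel exactly. The underlying computation is elementary: using $|\nabla_\h\rho|^2=1$, one gets
\[
\nabla_\h\rho\cdot\nabla_\h\zeta_1\,=\,-\frac{\f'}{\d\sqrt{E}}\,-\,\zeta_1\,\frac{\d'}{\d},\qquad \nabla_\h\rho\cdot\nabla_\h\zeta_2\,=\,\frac{\f'}{\e^{1-a}},
\]
together with $\partial_\zrm\zeta_1=-1/(\d\sqrt{E})$ and $\partial_\zrm\zeta_2=1/\e^{1-a}$. Plugging these into $M^\rho_{\rm BL}\,\nabla_\h\rho\cdot\nabla_\h f_{\rm BL}-\f' M^\rho_{\rm BL}\,\partial_\zrm f_{\rm BL}$, the two pairs of singular contributions cancel pairwise, leaving only $M^\rho(\partial_\rho f-\frac{\d'}{\d}\zeta_1\partial_{\zeta_1}f)$. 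It is precisely this cancellation that dictates the special form $-\f' M^\rho_{\rm BL}$ of the vertical component of $M_{\rm BL}$, which was imposed in turn by the divergence-free computation~\eqref{compute_div_BL}.

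Once the scalar identity is established, it will suffice to apply it with $f=N^\rho$, $N^\theta$, $N^\zrm$ and to collect the corresponding transport terms together with the two curvature contributions computed at the beginning, thus yielding precisely the formula announced in the lemma.
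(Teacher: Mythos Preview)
Your proposal is correct and follows essentially the same route as the paper: Leibniz decomposition into transport and curvature pieces, computation of the curvature pieces via~\eqref{derive_repere_mobile}, and the scalar identity $M_{\rm BL}\cdot\nabla f_{\rm BL}=M^\rho_{\rm BL}\bigl(\partial_\rho f-\tfrac{\d'}{\d}\zeta_1\partial_{\zeta_1}f\bigr)_{\rm BL}$. The only cosmetic difference is that the paper phrases the key cancellation by observing that the differential operator $\nabla_\h\rho\cdot\nabla_\h-\f'\partial_\zrm$ annihilates $z+\f(\rho)$ (hence both $\zeta_1$ up to the $\d'/\d$ term and $\zeta_2$ entirely), whereas you carry out the same cancellation by computing $\nabla_\h\rho\cdot\nabla_\h\zeta_i$ and $\partial_\zrm\zeta_i$ explicitly; the two are equivalent.
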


\begin {proof} [Continuation of the proof of Proposition\refer {structure_NLterm}]

By definition\refeq {flat_BL2_cut-off}, applying the above lemma with the boundary layer at order~$0$ ensures that 
$$
\bigl | \wt U^\surf_{0,BL} \cdot\nabla \wt U^\surf_{0,BL}(t,x_\h,z) \bigr |+
\bigl | \wt U^\bot_{0,BL} \cdot\nabla \wt U^\bot_{0,BL}(t,x_\h,z) \bigr | \leq C \bigl |u^\theta_\e(t,\rho)\partial_\rho u^\theta_\e(t,\rho)\bigr| \Bigl(e^{\frac z {\sqrt E}} + e^{-\frac {z+\f}  {\sqrt E}}\Bigr)\,.
$$
Let us recall the definition of~$u^\theta_\e$, which is 
$$
u^\theta_\e(t,\rho) \eqdefa \left(1 - \chi\Big(\frac{\phi(\rho)}{\e^{1-a}}\Big)\right) e^{-t {\lambda_\f(\rho) }} \ u_0^\theta (\rho)\, .
$$
Then, we have
$$
\longformule{
\frac 12 e^{2t{\lambda_\f(\rho)} } \partial_\rho \bigl(u^\theta_\e(t,\rho)\bigr)^2  = \frac{\phi'}{\e^{1-a}}\chi' \Big(\frac{\phi}{\e^{1-a}}\Bigr)\left(1 - \chi\Big(\frac{\phi}{\e^{1-a}}\Big)\right) \bigl(u_0^\theta \bigr)^2}
{ {}
+\left(1 - \chi\Big(\frac{\phi}{\e^{1-a}}\Big)\right)^2 \biggl( u_0^\theta \partial_\rho u_0^\theta
-\bigl(u_0^\theta\bigr)^2 \biggl( \frac t \f \partial_\rho ( {\f \lambda_\f(\rho)}) - \frac {t\f'}{\f} {\lambda_\f(\rho)} \biggr)\biggr)\,\cdotp
}
$$
As~$xe^{-\lam x}$ is bounded, we infer that 
$$
\frac 12 \partial_\rho \bigl(u^\theta_\e(t,\rho)\bigr)^2 \leq C e^{-\lam t} \e^{a-1}\bigl(u_0^\theta \bigr)^2 +
\bigl |  u_0^\theta \partial_\rho u_0^\theta\bigr|\, .
$$
Thus,  we deduce the following bound:
$$
\longformule{
\bigl | \wt U^\surf_{0,BL} \cdot\nabla \wt U^\surf_{0,BL}(t,x_\h,z) \bigr |+
\bigl | \wt U^\bot_{0,BL} \cdot\nabla \wt U^\bot_{0,BL}(t,x_\h,z) \bigr | 
}
{ {}
\leq  C e^{-\lam t} \e^{a-1}  \Bigl(e^{\frac z {\sqrt E}} + e^{-\frac {z+\f}  {\sqrt E}}\Bigr) \bigl(\bigl(u_0^\theta \bigr)^2 +
\bigl |  u_0^\theta \partial_\rho u_0^\theta\bigr | \bigr)\,.
}
$$
Then, by integration we get
\beq
\label {structure_NLterm_demoeq4}
\bigl \| \wt U^\surf_{0,BL} \cdot\nabla \wt U^\surf_{0,BL}\bigr\|_{L^1(\R^+; L^2(\Om_\f))} \leq C \e^{a-\frac{1}{2}}
\|u_0\|_{L^\infty(\cO)}  \|u_0\|_{H^1(\cO)} \,.
\eeq

Now let us treat the  {terms~$\cQ_1$, $\mc Q_2$ and $\mc Q_3$} of\refeq {structure_NLterm_demoeq1}. The easiest term is~$\cQ_2$. Indeed, using estimates {\refeq {22demoeq3b}}--\eqref {22demoeq5} and fixing the parameter $a$ so that $1-a$ is small enough, we get
$$
\bigl\| U_{\app,\e}- \wt U_{0,\rmint,\e}\bigr\|_{L^\infty(\R^+; L^2(\Om_\f))} \leq C 
 {\|u_0\|_{H^1(\cO)}} \e^b\,,
$$
for a suitable exponent $b>0$ depending on the fixed value of $a$.
On the other hand, using estimates \refeq {22demoeq3c}--\eqref{22demoeq5} and~\eqref{boundnablaU0}, we obtain
$$
\bigl\| \nabla \bigl(U_{\app,\e}- \wt U_{0,BL}\bigr)\bigr\|_{L^1(\R^+; L^\infty(\Om_\f))} \leq C \left( \|u_0\|_{W^{3,\infty}(\cO)} \,+\,
\left\|\frac{u_0}{\phi}\right\|_{L^\infty(\mc O)}\right) \,.
$$
We immediately infer that
\beq
\label {structure_NLterm_demoeq5}
\|\cQ_2\|_{L^1(\R^+; L^2(\Om_\f))} \leq C\e^b  \|u_0\|_{H^1(\cO)} \left( \|u_0\|_{W^{3,\infty}(\cO)} \,+\,
\left\|\frac{u_0}{\phi}\right\|_{L^\infty(\mc O)}\right)\,.
\eeq


In order to estimate~$\cQ_1$, let us observe that, for any vector field~$V$ on the form
$$
V(t,x_\h,z) = V^\rho (t,\rho(x_\h),z) \nabla_\h \rho +V^\theta (t,\rho(x_\h),z) \nabla^\perp_\h \rho+V^\zrm(t,x_\h,z) e_\zrm\,,
$$
thanks to \eqref{derive_repere_mobile} and the definition \eqref{defU0rmint1} of $\wt U_{0,\rmint,\e}$ we have
\beq
\label {2.1nl_demoeq2}
\begin {aligned}
& \bigl(\wt U_{0,\rmint,\e} \cdot\nabla V\bigr) (t,x_\h,z) \\
& = \left(1\,-\,\chi\Big(\frac{\phi(\rho(x_\h))}{\e^{1-a}}\Big)\right) e^{-t{\lambda_\f(\rho(x_\h))}} u^\theta_{0}(\rho(x_\h)) \\
&\qquad \times
\Bigl(V^\rho (t,\rho(x_\h),z)\vect B_\rho(t,x_\h,z) + V^\theta (t,\rho(x_\h),z)
\vect B_\theta(t,x_\h,z)+\nabla_\h^\perp\rho\cdot\nabla_\h V^\zrm (t,x_\h,z) e_\zrm\Bigr)\,,
\end {aligned}
\eeq
where~$\vect B_\rho$ and~$\vect B_\theta$ are smooth vector fields. {We remark that no derivatives of the radial and azimuthal components, $V^\rho$ and $V^\theta$
respectively, appear on the right-hand side, but only the horizontal derivates of the vertical component $V^\zrm$.}
Let us apply this formula in the case when~$V= \wt U^\surf_{0,BL}+ \wt U^\bot_{0,BL}$. Using Definition\refeq  {flat_BL2_cut-off} of the boundary layers at order~$0$, 
{we further remark that, in this case, $V^\zrm$ depends on $x_\h$ only through $\rho$. Thus one has $\nabla_\h^\perp\rho\cdot\nabla_\h V^\zrm = 0$, and}
we get that
$$
 \bigl | \wt U_{0,\rmint,\e} \cdot\nabla (\wt U_{0,\rm BL}^\surf+  \wt U_{0,\rm BL}^\bot\bigr) (t,x_\h,z)\bigr| \leq 
C e^{-2\lam t} |u_0(x_\h)|^2 \bigl( e^{\frac z {\sqrt E}} +e^{-\frac {z+\f}{\d\sqrt E}}\bigr).
$$
By integration, we infer that 
\beq
\label {2.1nl_demoeq3}
\bigl \| \wt U_{0,\rmint,\e} \cdot\nabla (\wt U_{0,\rm BL}^\surf+ \wt U_{0,\rm BL}^\bot)\bigr\|_{L^1(\R^+;L^2(\Om_\f))} 
\leq C  {\e^{\frac12}} \|u_0\|_{L^4(\cO)}^2 \,.
\eeq
Let us study the term~$\e  \wt U_{0,\rmint,\e} \cdot\nabla \wt U_{1,\rmint,\e}$. Using again\refeq  {2.1nl_demoeq2}, by Definition\refeq  {curved_BL10} of~$\wt U_{1,\rmint,\e}$,  {we deduce} that 
$$
\longformule{
\e  \bigl(\wt U_{0,\rmint,\e} \cdot\nabla \wt U_{1,\rmint,\e}\bigr) (t,x_\h,z) = 
\e {\lambda_\f(\rho)} e^{-2t{\lambda_\f(\rho)}} \bigl(u^\theta_{0,\e} \bigr)^2(\rho) \vect B_\rho(t,x_\h,z)
}
{ {}
+\frac {\e {\sqrt 2}} 2 e^{-t{\lambda_\f(\rho)}} u^\theta_{0,\e} (\rho) \nabla_\h^\perp\rho \cdot  \nabla_\h \dive_\h 
\biggl(\Bigl ( 1-  z {\lambda_\f(\rho)} \Bigr)u^\theta_\e(t,\rho)  \nabla_\h \rho  \biggr)  e_\zrm \,.
}
$$
As the norm of~$\nabla_\h\rho$ is equal to~$1$, we infer that, for a function~$f$  on the interval~$[\rho_0,\infty[$,
$$
\dive_\h \big(f(\rho) \nabla_\h \rho\big) = \partial_\rho f (\rho) +f(\rho)\D_\h\rho\,.
$$
We then get the relation
$$
\dive_\h 
\biggl(\Bigl ( 1-  z {\lambda_\f(\rho)} \Bigr)u^\theta_\e(t,\rho)  \nabla_\h \rho  \biggr) = 
\partial_\rho \biggl(\Bigl ( 1-  z{\lambda_\f(\rho)} \Bigr)u^\theta_\e(t,\rho) \biggr) 
+\Bigl ( 1-  z {\lambda_\f(\rho)} \Bigr)u^\theta_\e(t,\rho) \D_\h \rho  \, .
$$
Thus, we have
$$
\e  \bigl(\wt U_{0,\rmint,\e} \cdot\nabla \wt U_{1,\rmint,\e}\bigr) (t,x_\h,z) = 
\frac {\e {\sqrt 2}} 2 e^{-2t{\lambda_\f(\rho)}} \big(u^\theta_{0,\e}\bigr)^2 (\rho)
\Bigl ( 1-  z{\lambda_\f(\rho)} \Bigr) \nabla_\h^\perp \rho\cdot\nabla_\h\D_\h \rho\,.
$$
Using that~$z$ belongs to the interval~$[-\f(\rho),0]$, we infer that
$$
\e \bigl|  \bigl(\wt U_{0,\rmint,\e} \cdot\nabla \wt U_{1,\rmint,\e}\bigr) (t,x_\h,z)\bigr| \leq C \e e^{-2\lam t} 
\bigl(u_0(x_\h)\bigr)^2 \, .
$$
 Therefore, by integration, we finally deduce that 
$$
\e \bigl\| \wt U_{0,\rmint,\e} \cdot\nabla \wt U_{1,\rmint,\e}\bigr\|_{L^1(\R^+;L^2(\Om_\f))}  \leq C \e \|u_0\|_{{L^4(\mc O)}}^2 \,.
$$
We omit the proof of the estimates  of the other terms, which lead to
\begin{equation}
\begin{aligned}
\label {2.1nl_demoeq4}
 \| \cQ_1 \|_{L^1(\R^+;L^2(\Om_\f))} &\leq C \e^{\frac 12 } 
 {\left(\left\|u_0\right\|^2_{L^4(\mc O)} + \left\|u_0\right\|_{L^2(\mc O)} \left\|u_0\right\|_{W^{2,\infty}(\mc O)}\right)} \\
&  {\leq C \e^{\frac12} \left\|u_0\right\|_{L^2(\mc O)} \left\|u_0\right\|_{W^{2,\infty}(\mc O)}\,.}
\end{aligned}
 \end{equation}

Now let us estimate  the term~$\cQ_3$. By Definition\refeq  {flat_BL2_cut-off} of  the boundary layers at order~$0$ and Estimate\refeq {22demoeq3b}, we~have
\beq
\label {2.1nl_demoeq5}
\bigl \| \nabla_\h \wt U_{0,\rm BL}(t,\cdot) \bigr \|_{L^2(\Om_\f)} \leq C e^{-\frac \lam 2 t} \e^{-\frac 12} 
\left\|u_0\right\|_{H^1(\mc O)}\,.
\eeq
By derivation with respect to the vertical variable~$z$, we get 
$$
\bigl | \partial_z  \wt U_{0,\rm BL}(t,x_\h,z) \bigr |\leq \frac C \e  e^{-\lam t} |u_0(x_\h)| \bigl (e^{\frac z {\sqrt E}} + e^{-\frac {z+\f} {\sqrt E}}\bigr).
$$
By integration, we infer that
$$
\bigl \| \partial_z  \wt U_{0,\rm BL}(t,\cdot) \bigr \|_{L^2(\Om_\f)} \leq C \e^{-\frac 12}  e^{-\lam t} \|u_0\| _{L^2(\cO)}\,.
$$
Together with\refeq {2.1nl_demoeq5}, this gives
\beq
\label {2.1nl_demoeq6}
\bigl \| \nabla  \wt U_{0,\rm BL}(t,\cdot) \bigr \|_{L^1(\R^+;L^2(\Om_\f))} \leq C \e^{-\frac 12} 
 {\left\|u_0\right\|_{H^1(\mc O)}\,.}
\eeq
On the other hand, using \refeq {22demoeq3c}--\eqref{22demoeq5}, we claim that
$$
\bigl \|U_{\app,\e}- \wt U_{0,\rmint,\e}- \wt U_{0,BL} \bigr\|_{L^\infty(\R^+\times\Om_\f)}\leq \e^{2a-1} 
 {\left\|u_0\right\|_{W^{2,\infty}(\mc O)}\,.}
$$
 Putting this bound together with Inequality\refeq {2.1nl_demoeq6}  yields
$$
\|\cQ_3\|_{L^1(\R^+;L^2(\Om_\f)} \leq C\e^{2a-\frac 32}  {\left\|u_0\right\|_{H^1(\mc O)} \left\|u_0\right\|_{W^{2,\infty}(\mc O)}\,.}
$$
With the estimates\refeq {structure_NLterm_demoeq2},\refeq  {structure_NLterm_demoeq4},\refeq {structure_NLterm_demoeq5},\refeq {2.1nl_demoeq4}   and Formula\refeq  {structure_NLterm_demoeq1}, this concludes the proof of Proposition\refer   {structure_NLterm}.
\end{proof}


\appendix
\section{Proof of Lemma~\ref {deriv_U_0int}} \label{appendix1}

Our aim is to estimate spatial derivatives of
$$
  F_\e(t,x_\h) \eqdefa  e^{-t{\lambda_\f(\rho)}} \biggl(1 - \chi\Big(\frac{\phi(\rho)}{\e^{1-a}}\Big)\biggr)Q\Bigl(x_\h, \frac 1 \f\Bigr) \, ,$$
where
$$
Q(x_\h,X) = \sum_{j=0}^{m_0} Q_j\big(\rho(x_\h)\big)X^j \, .
$$ 
The proof of the lemma is based on a direct differentiation and the Leibniz rule, together with the use of the localization property
$\phi(\rho)\gtrsim \e^{1-a}$, which holds true on the support of $  F_\e$.

As $\rho$ is smooth and bounded as well as all its derivatives, and since all the functions appearing in the formula depend on $x_\h$ only through $\rho$,
we consider $\rho$ as a variable and perform differentiation only with respect to $\rho$. We can therefore write~$F_\e$ under the form
\begin{equation}\label{defFeps}
\begin{aligned}
F_\e(t,x_\h) &=   \mc F_\e\big(t,\rho,\frac 1{\f(\rho)}\big) =  e^{-t{\lambda_\f(\rho)}}  v(\rho)\widetilde {\mc F_\e}\big( \rho,\frac 1{\f(\rho)}\big)\, , \with  \\
 \widetilde {\mc F_\e}\big( \rho,\omega_1) &\eqdefa
 \sum_{j=0}^{m_0} Q_j (\rho  )\omega_1^j \quad \mbox{and} \quad  v(\rho) \eqdefa 1 - \chi\Big(\frac{\phi(\rho)}{\e^{1-a}}\Big) 
 \end{aligned}
\end{equation}
and estimate~$\partial_\rho^k \Big(\mc F_\e\big(t,\rho,\frac 1{\f(\rho)}\big) \Big)  $.
Then, the proof of the lemma is a direct consequence of the following formula:
\beq
\label {deriv_U_0int_demo_eq1}
\partial_\rho^k \Big(\mc F_\e\big(t,\rho,\frac 1{\f(\rho)}\big) \Big)  =
 e^{-t{\lambda_\f(\rho)}}  \sum_{j=0}^k   {\mc F_\e^{k,j}} \Bigl( \rho, \frac 1 {\f(\rho)}, \frac t {\f(\rho)}\Bigr)  \partial_\rho^{k-j} v(\rho)
\eeq
where each~$\mc F_\e^{k,j} = \mc F_\e^{k,j}(\rho,\om_1,\om_2)$ is a polynomial with respect to the second and third variables,
respectively of degrees at most~$j+m_0$ 
and~$ j$. The coefficients of the polynomials are smooth functions of~$\rho$ and  are bounded by  derivatives of order~1 to~$k$ of~$Q_j$.

\medbreak

Let us prove Formula \eqref{deriv_U_0int_demo_eq1} by induction.
Before starting the argument, we observe that the function
$\lambda_\f(\rho)$, defined in \eqref{form of the limit}, contains a factor $\frac{1}{\phi}$. Therefore, in order to get the precise expression
on the right-hand side of \eqref{deriv_U_0int_demo_eq1}, in the computations below we will use the following trick:
\[
\partial_\rho\big(\lambda_\f(\rho)\big)\,=\,\partial_\rho\left(\frac{\phi(\rho)}{\phi(\rho)}\lambda_\f(\rho)\right)\,=\,
\frac{1}{\phi(\rho)}\,\partial_\rho\big(\phi(\rho)\,\lambda_\f(\rho)\big)\,-\,\frac{\phi'(\rho)}{\phi(\rho)}\,\lambda_\f(\rho)\,,
\]
where we remark that $\phi(\rho)\,\lambda_\f(\rho)$ is a function of $\rho$ which contains no negative powers of $\phi(\rho)$.

With these considerations in mind, let us prove Formula \eqref{deriv_U_0int_demo_eq1} for $k=1$.
By the Leibniz formula and the chain rule, we~have
$$
\longformule{
e^{t{\lambda_\f(\rho)}}\partial_\rho \Big(  {\mc F_\e}\big(t,\rho,\frac 1{\f(\rho)}\big) \Big) = 
(\partial_\rho  \widetilde {\mc F_\e})\Bigl( \rho,\frac 1 \f\Bigr) v(\rho) - \frac {\f'} {\f^2}(\partial_{\omega_1} \widetilde {\mc F_\e})\Bigl( \rho,\frac 1 \f\Bigr) v(\rho)
}
{ {}+ \widetilde {\mc F_\e}\Bigl(\rho,\frac 1 \f\Bigr)\biggl(\Bigl(-\partial_\rho\big( {\f \lambda_\f(\rho)}\big)\frac t \f
+\f' {\lambda_\f(\rho)} \frac t \f \Bigr) v(\rho) + \partial_\rho v (\rho)  \biggr) \, .
}
$$
Then, after defining 
\begin{equation*}
\begin{aligned} 
 \mc F_\e^{1,0}( \rho,\om_1,\om_2) &\eqdefa  (\partial_\rho  \widetilde {\mc F_\e})( \rho,\om_1) \andf  \\
\mc F_\e^{1,1}( \rho,\om_1,\om_2)&\eqdefa   - \phi' \om_1^2(\partial_{\omega_1} \widetilde {\mc F_\e})( \rho,\om_1)+ \widetilde {\mc F_\e}( \rho,\om_1)  \bigl( -\partial_\rho\big( {\f \lambda_\f(\rho)}\big)\om_2 +\f' {\f \lambda_\f(\rho)} \om_1\om_2\bigr)
 \,  , 
\end{aligned}
\end {equation*}
we get Assertion\refeq  {deriv_U_0int_demo_eq1} for~$k=1$.
Next, let us assume\refeq {deriv_U_0int_demo_eq1} for some~$k\geq0$. The Leibniz formula and the chain rule imply that 
\begin{align*}
&e^{t{\lambda_\f(\rho)}}\partial_\rho^{k+1} \Big(\mc F_\e\big(t,\rho,\frac 1{\f(\rho)}\big) \Big) \\
&\quad = \sum_{j=0}^k\mc F_\e^{k,j} \Bigl( \rho, \frac 1 \f, \frac t \f\Bigr)\partial_\rho^{k-j+1}v(\rho)
\\
&\qquad+ \sum_{j=0}^k\left(-\partial_\rho\big( {\f \lambda_\f(\rho)}\big)\frac t \f
+\f'  { \lambda_\f(\rho)} \frac t \f \right)\mc F_\e^{k,j} \Bigl( \rho, \frac 1 \f, \frac t \f\Bigr) \partial_\rho^{k-j}v(\rho) \\
&\qquad+ \sum_{j=0}^k\Bigg(\partial_\rho\mc F_\e^{k,j} \Bigl( \rho, \frac 1 \f, \frac t \f\Bigr)
 { 
 -\frac{ \phi' }{\phi^2} \partial_{\omega_1}\mc F_\e^{k,j} \Bigl( \rho, \frac 1 \f, \frac t \f\Bigr)- \phi' \frac{t}{\phi} \frac{1}{\phi} \partial_{\omega_2}\mc F_\e^{k,j} \Bigl( \rho, \frac 1 \f, \frac t \f\Bigr)}\Bigg)\partial_\rho^{k-j}v(\rho) \, .
\end{align*}
Changing~$j$ into~$j+1$ in  each sum ensures\refeq {deriv_U_0int_demo_eq1} for~$k+1$, with~$\mc F_\e^{k+1,0}=\partial_\rho\mc  F_\e^{k,0}$ and, for~$j$ between~$1$ and~$k+1$,
\begin{align*}
\mc  F_\e^{k+1,j} (\rho,\om_1,\om_2) &= \mc F_\e^{k,j}(\rho,\om_1,\om_2) + \bigl( {\f \lambda_\f(\rho)} \f'\om_1\om_2 - \partial_\rho\big( {\f \lambda_\f(\rho)}\big) \om_2\bigr) \mc F_\e^{k,j-1} (r,\om_1,\om_2) \\
&  -\f'\bigl(\om_1\om_2\partial_{\om_2}
+(\om_1)^2\partial_{\om_1}\bigr)\mc F_\e^{k,j-1}(\rho,\om_1,\om_2)\, .
\end{align*}
This proves\refeq {deriv_U_0int_demo_eq1}  for any~$k\in\N$.

\medbreak

In order to prove the lemma, we use  Formula\refeq {deriv_U_0int_demo_eq1},  recalling that~$ \widetilde {\mc F_\e}$ is defined in~\eqref{defFeps}. We get 
$$
 \begin{aligned}
\partial_\rho^k \Big(\mc F_\e\big(t,\rho,\frac 1{\f(\rho)}\big) \Big)  &=e^{-t{\lambda_\f(\rho)}} 
  \sum_{j=0}^k   {\mc F_\e^{k,j}} \Bigl( \rho, \frac 1 {\f(\rho)}, \frac t {\f(\rho)}\Bigr) \partial_\rho^{k-j }v(\rho)\\
 & =e^{-t{\lambda_\f(\rho)}}  \sum_{j=0}^k \mc F_\e^{k,j} \Bigl(\rho,\frac t \f,\frac 1 \f\Bigr)    \partial_\rho^{k-j }\Bigl (1-\chi \Bigl(\frac {\f} {\e^{1-a}}  \Bigr) \Bigr) \end{aligned}
$$
But~$\ds \biggl | \partial_\rho^\ell \Bigl (1-\chi \Bigl(\frac {\f} {\e^{1-a}}  \Bigr) \Bigr)\biggr | \leq C_k \e^{-\ell(1-a)}$
 and  $\phi\gtrsim \e^{1-a}$ on the support of $\mc F_\e$. Moreover~$ {\mc F_\e^{k,j}} $ is bounded by derivatives of order~1 to~$k$ of~$Q_j$. It follows that
\begin{align*}
 \biggl | \partial_\rho^k \Big(\mc F_\e\big(t,\rho,\frac 1{\f(\rho)}\big) \Big)  \biggr|    \leq C_k e^{-t\frac {\sqrt {2\b}} {2H}}    {\e^{-(k+m_0)(1-a)}}    \supetage {\g\leq |\al|}{0\leq j\leq m_0} |\partial_{\rho}^{\g}Q_j(\rho)|\,.
\end{align*}
This estimate proves the lemma.
 \qed

\section{Proof of Lemma \ref{l:non-linear}}\label{appendix 2}

The Leibniz formula implies that 
\beq
\label {l:non-linear_demoeq1}
\begin{aligned}
M _{\rm BL}\cdot\nabla  N_{\rm BL} &=  \bigl(M _{\rm BL}\cdot\nabla N_{\rm BL}^\rho \bigr) \nabla_\h\rho + \bigl(M _{\rm BL}\cdot\nabla N_{\rm BL}^\theta \bigr) \nabla_\h^\perp\rho + \bigl(M _{\rm BL}\cdot\nabla  N_{\rm BL}^\zrm \bigr)e_\zrm \\
 &\qquad\qquad\qquad\qquad\qquad\qquad\quad{}
+  N_{\rm BL}^\rho M _{\rm BL}\cdot\nabla  \nabla_\h\rho +  N_{\rm BL}^\theta M _{\rm BL}\cdot\nabla \nabla_\h^\perp\rho\,.
\end {aligned}
\eeq
Using formulas\refeq  {derive_repere_mobile}, we infer that 
\beq
\label {l:non-linear_demoeq2}
 N_{\rm BL}^\rho M _{\rm BL}\cdot\nabla  \nabla_\h\rho +  N_{\rm BL}^\theta M _{\rm BL}\cdot\nabla \nabla_\h^\perp\rho = -\big(M ^{\theta} N^{\theta}\D_\h\rho\big)_{\rm BL}  \nabla_\h\rho 
+\big(  {M ^{\theta} N^{\rho}\D_\h\rho}\big)_{\rm BL}\nabla^\perp_\h\rho.
 \eeq
As for any function~$a$ on~$[\rho_0,\infty[\times \R^-\times \R^+$, $a_{\rm BL}$ is a function of~ $\rho(x_\h)$, we have
$$
\nabla_\h^\perp \rho \cdot \nabla_\h a_{\rm BL}=0.
$$
 Thus we infer~that 
$$
M _{\rm BL}\cdot\nabla a_{\rm BL} =  M^\rho _{\rm BL} \bigl(\nabla_\h\rho\cdot \nabla_\h
-\f'\partial_z \bigr)a_{\rm BL}.
$$
Then, using\refeq  {derrBL1} and  that~$\bigl(\nabla_\h\rho\cdot \nabla_\h
-\f'(\rho)\partial_z)\bigr) \bigl(z+\f(\rho)\bigr)=0$,  we infer that 
$$
M _{\rm BL}\cdot\nabla a_{\rm BL} = M^\rho _{\rm BL} \Bigl(\partial_\rho a -\frac {\d'} \d \zeta_1\partial_{\zeta_1} a\Bigr) _{\rm BL}
$$
Together with\refeq  {l:non-linear_demoeq2}, plugging this formula into\refeq  {l:non-linear_demoeq1} ensures the result.
\qed

\medbreak
\medbreak
\medbreak
 
\begin{thebibliography}{99}


\bibitem{B-F-P} E. Bocchi, F. Fanelli and C. Prange, 
Anisotropy and stratification effects in the dynamics of fast rotating compressible fluids,
{\it Annales de l'Instut Henri Poincar\'e C Analyse Non Lin\'eaire}, 39 (2022), no. 3, 647-704.

\bibitem{Brav-F} M. Bravin and F. Fanelli,
Fast rotating non-homogeneous fluids in thin domains and the Ekman pumping effect,
{\it Journal of Mathematical Fluid Mechanics}, 25 (2023), no. 4, Paper No. 83.

\bibitem{B-D-GV} D. Bresch, B. Desjardins and D. G\'erard-Varet,
Rotating fluids in a cylinder,
{\it Discrete and Continuous Dynamical Systems}, 11 (2004), no.1, 47-82.

\bibitem {Brustad}
K. Brustad,  Segre's theorem. An analytic proof of a result in differential geometry, 
{\it Asian Journal of Mathematics}, {25} (2021), no. 3, 321-340.

\bibitem{chemin21}
J.-Y. Chemin, Le syst\`eme de Navier-Stokes incompressible soixante dix
ans apr\`es Jean Leray,
{\it S\'eminaire et Congr\`es}, {9} (2004), 99-123.

\bibitem {chemin45}
J.-Y. Chemin, About weak-strong uniqueness for uniqueness for the 3D incompressible Navier-Stokes,
{\it Communications in Pure and Applied Mathematics}, {64} (2011), no. 12, 1587-1598.

\bibitem{C-D-G-G_ESAIM} J.-Y. Chemin, B. Desjardins, I. Gallagher and E. Grenier, Ekman boundary layers in rotating fluids,
{\it ESAIM Control, Optimisation and Calculus of Variations}, 8 (2002), 441-466.

\bibitem {cdggbook}
J.-Y. Chemin, B. Desjardins, I. Gallagher and E. Grenier,
{\it Mathematical  Geophysics; an introduction to rotating fluids and 
Navier-Stokes equations},
Oxford Lecture series in Mathematics and its applications,~{32}, 
Oxford University Press, 2006.

\bibitem{C-F} D. Cobb and F. Fanelli,
On the fast rotation asymptotics of a non-homogeneous incompressible MHD system,
{\it Nonlinearity} 34 (2021), no. 4, 2483-2526.

\bibitem{CR-B} B. Cushman-Roisin and J.-M. Beckers,
{\it Introduction to geophysical fluid dynamics},
Internat. Geophys. Ser., 101, Elsevier/Academic Press, Amsterdam, 2011.

\bibitem{Dalibard1}
A.-L. Dalibard and D. G\'erard-Varet, Nonlinear boundary layers for rotating fluids.  
{\it Analysis and PDE}, {10} (2017), no. 1, 1-42.

\bibitem{Dal-Prange} A.-L. Dalibard and C. Prange, 
Well-posedness of the Stokes-Coriolis system in the half-space over a rough surface,
{\it Analysis and PDE}, 7 (2014), no. 6, 1253-1315.
 
\bibitem{Dalibard2}
A.-L. Dalibard and L.  Saint-Raymond, 
 Mathematical study of resonant wind-driven oceanic motions.    
{\it Journal of Differential Equations}, {246} (2009), no. 6, 2304-2354.

\bibitem{Dalibard3}
A.-L. Dalibard and L.  Saint-Raymond, Mathematical study of the beta-plane model for rotating fluids in a thin layer.  
{\it Journal de Mathématiques Pures et Appliquées}, {94} (2010), no. 2, 131-169.

\bibitem{Dalibard4}
 {A.-L. Dalibard and L.  Saint-Raymond, {\it Mathematical Study of Degenerate Boundary Layers: A Large Scale Ocean Circulation Problem}. Memoirs of the
 American Mathematical Society, 253, 2018.}

\bibitem {gerard_varet_2006}
E. Dormy and D. G\'erard-Varet,  Ekman layers near wavy boundaries,
{ \it Journal of  Fluid Mechanics}, {565} (2006), 115-134.

\bibitem{FG} F. Fanelli and I. Gallagher, Asymptotics of fast rotating  density-dependent incompressible fluids in two space dimensions, 
  {\it   Revista Matematica Iberoamericana} 35 (2019), no. 6, 1763-1807. 
  
\bibitem{F-G-GV-N} E. Feireisl, I. Gallagher, D. G\'erard-Varet and A. Novotn\'y,
Multi-scale analysis of compressible viscous and rotating fluids.
{\it Communications in Mathematical Physics} 314 (2012), no. 3, 641-670.

\bibitem{F-L-N} E. Feireisl, Y. Lu and A. Novotn\'y,
Rotating compressible fluids under strong stratification,
{\it Nonlinear Analysis Real World Applications}, 19 (2014), 11–18.

\bibitem{GSR1}   I. Gallagher and L. Saint-Raymond, Weak convergence results
for inhomogeneous rotating fluid equations,  {\it Journal d'Analyse Math{\'e}matique}, {99} (2006), 1-34.

\bibitem{GSR2}   I. Gallagher and L. Saint-Raymond, 
{\it Mathematical study of the
   betaplane model: equatorial waves and convergence results},
M\'emoires de la Soci\'et\'e Math\'ematique de France, {107}, 2006.

\bibitem {gerard_varet_2003} 
D. G\'erard-Varet,  Highly rotating fluids in rough domains,
{\it  Journal de  Math\'ematiques  Pures et Appliqu\'ees},~{82} (2003), no. 11, 1453-1498.

\bibitem {grenier_masmoudi}
E. Grenier and N. Masmoudi,  Ekman layers of rotating fluids, the case of well prepared initial data,
{\it Communications in Partial Differential Equations},  {22} (1997), no. 5-6, 953-975.

\bibitem {lerayns}
J. Leray, Sur le mouvement d'un liquide visqueux emplissant l'espace,
{\it Acta Matematica} 63 (1933), no. 1, 193-248.

\bibitem{Masm} N. Masmoudi, Ekman layers of rotating fluids: The case of general initial data,
{\it Communications on Pure and Applied Mathematics}, 53 (2000), no. 4, 432-483.

\bibitem{Masm-Rouss} N. Masmoudi and F. Rousset, Stability of oscillating boundary layers in rotating fluids,
{\it Annales Scientifiques de l'\'Ecole Normale Sup\'erieure}, 41 (2008), no. 6, 955-1002.

\bibitem{Ped} J. Pedlosky: {\it Geophysical fluid dynamics},
Springer-Verlag, New-York, 1987.

\bibitem{Rouss} F. Rousset, Stability of large Ekman boundary layers in rotating fluids,
{\it Archive for Rational Mechanics and Analysis}, 172 (2004), no. 2, 213-245.

\bibitem {saint-raymond}
L. Saint-Raymond, Weak compactness methods for singular penalization problems with boundary layers
{\it SIAM Journal on Mathematical Analysis}, 41 (2009), no. 1, 153-177.

\bibitem{vonwahl} W. von Wahl, {\it The Equations of Navier--Stokes and Abstract Parabolic
Equations}, {Aspects of Mathematics},  Braunschweig, 1985.
\end {thebibliography}

\end{document}